\documentclass[reqno]{amsart}
\usepackage[english]{babel}
\usepackage{amssymb,verbatim,cite}
\usepackage{enumerate}
\usepackage[T1,T5]{fontenc}
\newtheorem{theorem}{Theorem}[section]
\newtheorem{proposition}[theorem]{Proposition}
\newtheorem{lemma}[theorem]{Lemma}
\newtheorem{corollary}[theorem]{Corollary}
\theoremstyle{definition}
\newtheorem{definition}[theorem]{Definition}
\newtheorem{example}[theorem]{Example}
\newtheorem{question}[theorem]{Question}
\theoremstyle{remark}
\newtheorem*{remark}{Remark}
\numberwithin{equation}{section}

\newcommand{\CEP}[1]{\mbox{$\mathbb{C}^{#1}$}}
\newcommand{\C}{\mbox{$\mathbb{C}$}}
\newcommand{\N}{\mbox{$\mathbb{N}$}}

\newcommand{\PSH}[1]{\mbox{$\mathcal{PSH}(#1)$}}

\begin{document}

\title[MPSH Functions and the FS determinants in Hilbert spaces]{Maximal Plurisubharmonic Functions and Fujii--Seo Determinants in Hilbert spaces}

\author{Per \AA hag}\address{Department of Mathematics and Mathematical Statistics\\ Ume\aa \ University\\SE-901~87 Ume\aa \\ Sweden}\email{per.ahag@math.umu.se}

\author{Rafa\l\ Czy{\.z}}\address{Faculty of Mathematics and Computer Science, Jagiellonian University, \L ojasiewicza~6, 30-348 Krak\'ow, Poland}
\email{rafal.czyz@im.uj.edu.pl}

\author{Antti Per\"{a}l\"{a}}
\address{Department of Mathematics and Mathematical Statistics\\ Ume\aa \ University\\SE-901 87 Ume\aa \\ Sweden}
\email{antti.perala@umu.se}

\author{Jani Virtanen}
\address{University of Eastern Finland, Department of Physics and Mathematics, 80100 Joensuu, Finland. University of Reading, Reading, UK. University of Helsinki, Helsinki, Finland}
\email{jani.virtanen@uef.fi, j.a.virtanen@reading.ac.uk, and \newline jani.virtanen@helsinki.fi}

\keywords{Fujii--Seo determinant; normalized determinant; positive operators; chaotic order; plurisubharmonic functions; maximal plurisubharmonic functions; Levi form; Hilbert spaces; infinite-dimensional complex analysis.}

\subjclass[2020]{Primary 32U05, 46G05; Secondary 47A63, 47B65}

\begin{abstract}
Let $H$ be a complex Hilbert space and let $\Omega\subset H$ be a domain. In infinite dimensions, there is no canonical complex Monge--Amp\`ere operator and no basis-free determinant of the Levi form. Hence, a determinant-type characterization of maximal plurisubharmonic functions is not immediate. We propose to use the normalized determinants of Fujii and Seo: for a bounded strictly positive operator $A$ and a unit vector $x\in H$, we set $\Delta_x(A):=\exp\bigl(\langle (\log A)x,x\rangle\bigr)$, and we extend this naturally to non-invertible positive operators. We show that, for strictly positive operators, inequalities for $\Delta_x$ precisely describe the chaotic order $\log A\ge \log B$, and we combine this observation with Kantorovich--Specht type bounds for positive operators.

For $u\in \PSH{\Omega}\cap C^2(\Omega)$ we define the \emph{Fujii--Seo determinant density}
\[
\operatorname{FSD}(u)(a):=\inf_{\|x\|=1}\Delta_x\!\bigl(D'D''u(a)\bigr),\qquad a\in\Omega,
\]
and identify it with  the lower spectral endpoint $\inf\sigma(D'D''u(a))$. Thus, $\operatorname{FSD}(u)$ is precisely the infimum of the spectrum of the Levi form, and its vanishing gives a basis-independent criterion for pointwise degeneracy of the Levi form. We prove that maximality implies $\operatorname{FSD}(u)\equiv 0$, give sufficient global degeneracy criteria for maximality, and establish several comparison principles for $C^2$ plurisubharmonic functions, including results under uniform ellipticity bounds on the Levi form. 
\end{abstract}

\maketitle

\section{Introduction}

In 1926, Riesz gave a novel characterization of subharmonic functions as follows: if $u$ is upper semicontinuous and not identically $-\infty$ on a domain $\Omega\subset\mathbb R^2$, then $u$ is subharmonic if for every $\Omega'\Subset \Omega$ and every function $U$ that is harmonic on $\Omega'$ and continuous on $\overline{\Omega'}$, one has
\[
u\le U \ \text{on }\partial \Omega' \quad\Longrightarrow\quad u\le U \ \text{in }\Omega'.
\]
In this framework, harmonic functions may be viewed as the maximal subharmonic functions. For Riesz' original formulation, see~\cite{Rie26}.

Motivated by the same Riesz viewpoint, Sadullaev~\cite{sadu} in 1981 introduced maximal plurisubharmonic functions in several complex variables as follows: a plurisubharmonic function $u$ on a domain $\Omega\subseteq\CEP{n}$ is maximal if for every
$\omega\Subset\Omega$ and every upper semicontinuous function $v$ on $\bar\omega$ such that $v\in\PSH{\omega}$ and $v\le u$ on $\partial\omega$, one has $v\le u$ in $\omega$.
When $n=1$, maximal plurisubharmonic functions are precisely the harmonic functions. For background on maximal plurisubharmonic functions in $\CEP{n}$, see e.g.~\cite{Cegrell2009}.

In finite-dimensional complex analysis, maximality also admits an operator characterization. If $\Omega\subset\mathbb C^n$ and $u\in \PSH{\Omega}\cap C^2(\Omega)$, then the Levi form $D'D''u(z)$
is a positive semidefinite Hermitian matrix. For $C^2$ functions, maximality is tightly linked to degeneracy of the complex Monge--Amp\`ere operator $(dd^c u)^n$, and hence to a determinant-type
condition on the Levi form (see e.g.~\cite{Cegrell2004,Cegrell2009}).

The situation changes in infinite dimensions. Let $H$ be a complex Hilbert space with inner product $\langle\cdot,\cdot\rangle$. Throughout, the Hilbert-space inner product $\langle\cdot,\cdot\rangle$ is taken to be linear in the first variable and conjugate-linear in the second variable. Also, let $\mathcal B(H)$ denote the bounded operators on $H$, and let $I$ be the identity. For $A\in\mathcal B(H)$ we write $A\ge 0$ if $\langle Ax,x\rangle\ge 0$ for all $x\in H$, and $A>0$ if $A$ is positive and invertible, equivalently if $A\ge mI$ for some $m>0$. Plurisubharmonic functions on domains $\Omega\subset H$ can be defined via subharmonicity on complex lines (see, e.g., Mujica~\cite{M}). For the Levi form on open subsets of complex Banach spaces and its basic properties, see also Ligocka~\cite{Lig76}. For $u\in \PSH{\Omega}\cap C^2(\Omega)$ the Levi form $D'D''u(z)$ is again a positive operator on $H$. However, in infinite dimensions, there is no canonical complex Monge--Amp\`ere operator and no
basis-free determinant of $D'D''u(z)$ that would play the role of $\det(D'D''u)$ in $\mathbb C^n$. Thus, even for $C^2$ functions, it is not immediate how to formulate a useful determinant-type degeneracy condition capturing maximality.

The notion of maximal plurisubharmonicity on domains in infinite-dimensional spaces is already classical. In particular, Dineen and Gaughran studied maximal plurisubharmonic functions on domains in Banach spaces in~\cite{DG93}. Here, we work on Hilbert spaces and restrict to $C^2$ plurisubharmonic functions, so that the Levi form is well-posed pointwise as a bounded positive operator. The problem is then to understand whether maximality forces a basis-free degeneracy condition on $D'D''u(z)$, and under which operator-theoretic hypotheses on the family $z \mapsto D'D''u(z)$ the converse implication holds. 

There are, of course, several notions of determinants in infinite-dimensional operator theory. For instance, Fredholm introduced a determinant in his study of integral equations~\cite{Fre03},
and Fuglede--Kadison defined a determinant for invertible elements of a $\mathrm{II}_1$ factor in terms of the canonical trace~\cite{FK51,FK52} (see also~\cite{Arv67,deLaHarpe13}). These determinants are intrinsically tied to trace-based frameworks and therefore do not apply directly to general Levi forms, which are merely bounded positive operators and come with no preferred trace. This motivates working with a determinant attached to vector states instead.

The main tool of this paper is the \emph{normalized determinant} (Fujii--Seo determinant) introduced by
Fujii and Seo~\cite{FS}. For a bounded positive operator $A$ on $H$ and a unit vector $x\in H$ one defines
\[
\Delta_x(A):=\exp\bigl(\langle (\log A)x,x\rangle\bigr),
\]
with the standard extension when $A\ge 0$ is not invertible.
We may view $\Delta_x$ as a continuous weighted geometric mean of the spectrum of $A$, attached to the vector state $T\mapsto\langle Tx,x\rangle$. A key structural feature is that, for $A,B>0$, inequalities for $\Delta_x$ encode the \emph{chaotic order}: requiring $\Delta_x(A)\ge \Delta_x(B)$ for all unit vectors $x$ is equivalent to $\log A\ge \log B$
(Proposition~\ref{log}). This bridge allows combining determinant-type hypotheses with quantitative Kantorovich--Specht type estimates for powers of positive operators (Theorem~\ref{KTI}).

Given $u\in \PSH{\Omega}\cap C^2(\Omega)$ we introduce the pointwise quantity
\[
\operatorname{FSD}(u)(a):=\inf_{\|x\|=1}\Delta_x\!\bigl(D'D''u(a)\bigr),\qquad a\in\Omega,
\]
which we call the \emph{Fujii--Seo determinant density} of $u$. In finite dimensions, $\Delta_x(D'D''u(a))$ is a weighted geometric mean of the eigenvalues of $D'D''u(a)$, and taking the infimum over $\|x\|=1$ recovers the smallest eigenvalue. In infinite dimensions, the same infimum detects the spectral endpoint $\inf\sigma(D'D''u(a))$ (Proposition~\ref{prop:extrema}); consequently, $\operatorname{FSD}(u)(a)=0$ is a natural degeneracy condition for the Levi form at $a$.

In particular, once Proposition~\ref{prop:extrema} is established, the vanishing condition $\operatorname{FSD}(u)(a)=0$ is equivalent to $\inf\sigma(D'D''u(a))=0$. Thus, the latter maximality criteria are largely spectral/operator-theoretic in nature, even though the determinant language remains a natural basis-free entry point.

Our results show that $\Delta_x$ provides a workable substitute for determinants of Levi forms and leads to comparison principles in infinite dimensions.
The main results are:
\begin{enumerate}\itemsep2mm
\item \emph{Maximality forces Levi-form degeneracy:}
if $u$ is maximal in $\Omega$, then $\operatorname{FSD}(u)\equiv 0$
(Theorem~\ref{maximality_implies_zero}).

\item \emph{Sufficient global degeneracy criteria for maximality:}
If there exists a unit vector $x$ such that
$\langle D'D''u(a)x,x\rangle=0$ for all $a\in\Omega$, then $u$ is maximal
(Corollary~\ref{maximal}). More generally, maximality follows if the ranges
$\mathrm{Ran}(D'D''u(a))$ lie in a fixed proper closed subspace of $H$
(Proposition~\ref{prop:common-range}). We also prove a variant of this statement: maximality still holds if, on each bounded open set, the Levi forms are uniformly close to having a common range (Proposition~\ref{prop:approx_common_range}); in particular, this holds when they form a collectively compact family in the classical sense of Anselone and Palmer~\cite{AnselonePalmer} (Corollary~\ref{cor:collectively_compact}).

\item \emph{Comparison principles:} under natural ellipticity bounds on one Levi form, and also in the model case of $\|z\|^2$, pointwise inequalities involving $\Delta_x$ yield domination results for plurisubharmonic functions on $\Omega$
(Theorems~\ref{cp1}--\ref{cp4} and Corollary~\ref{cor_bounds}).
These results use Proposition~\ref{log} and Theorem~\ref{KTI} in the uniformly elliptic setting, and Proposition~\ref{prop:extrema} in the endpoint cases.
\end{enumerate}

The paper is organized as follows. Section~2 records the operator-theoretic background on the Fujii--Seo determinant $\Delta_x$. This section is deliberately more extensive than the minimum required for the subsequent proofs: since $\Delta_x$ is not standard in pluripotential theory, we include the relevant material on chaotic order and determinant inequalities in order to fix notation, make the paper self-contained, and place the later use of $\operatorname{FSD}(u)$ in its natural operator-theoretic context. Section~3 recalls plurisubharmonic functions on Hilbert spaces and sets the conventions for the Levi form. Section~4 introduces the Fujii--Seo determinant density $\operatorname{FSD}(u)$ and proves the maximality results. Section~5 gives examples of separating pointwise degeneracy, actual null directions, compactness, and moving finite-rank ranges. Section~6 establishes the comparison principles. Finally, Section~\ref{sec:open} collects open problems and possible directions for further work.

\section{The Fujii--Seo determinant}

In this section, we recall the Fujii--Seo determinant $\Delta_x(A)$ introduced by Fujii and Seo~\cite{FS} for a positive operator $A$ on a Hilbert space $H$ and a unit vector $x$. As already mentioned, it should be viewed as a continuous ``geometric mean'' attached to the vector state $T\mapsto \langle Tx,x\rangle$. We collect the basic properties and inequalities that will be used later, and we also state a few determinant-type estimates that fit naturally into this paper.

If $A>0$ is a bounded positive invertible operator and $x\in H$ is a unit vector, then $\log A$ is a bounded self-adjoint operator, and one defines
\begin{equation}\label{eq:def_delta}
\Delta_x(A):=\exp\bigl(\langle (\log A)x,x\rangle\bigr).
\end{equation}
For $A\ge 0$ merely positive semidefinite, $\log A$ is typically unbounded. Nevertheless, the scalar $\langle (\log A)x,x\rangle$ is well defined as an extended real number by approximation:
\begin{align}\label{eq:def_delta_semidef}
\langle (\log A)x,x\rangle &:=\lim_{\varepsilon\to 0^+}\langle \log(A+\varepsilon I)x,x\rangle\in[-\infty,\infty),\\\notag
\Delta_x(A) &:=\exp\bigl(\langle (\log A)x,x\rangle\bigr),
\end{align}
with the convention $\exp(-\infty)=0$.
Equivalently, if $A=\int_{\sigma(A)} \lambda\,dE(\lambda)$ is the spectral resolution of $A$ and $E_x(\cdot):=\langle E(\cdot)x,x\rangle$ is the associated scalar spectral measure, then
\[
\langle (\log A)x,x\rangle=\int_{\sigma(A)}\log\lambda\,dE_x(\lambda),
\qquad (\log 0:=-\infty),
\]
so that $\Delta_x(A)\in[0,\|A\|]$ is always well defined.

More generally, a bounded operator $T$ admits a bounded logarithm whenever its spectrum does not separate $0$ from $\infty$ (see e.g. Conway--Morrel \cite{CM}). In particular, if $0\in\sigma(T)$, then $T$ has no bounded logarithm.

The functional-calculus viewpoint makes $\Delta_x(A)$ easy to manipulate. The next proposition collects the properties we will use most often.

\begin{proposition}\label{basic_properties}
Fix a unit vector $x\in H$.
\begin{enumerate}\itemsep2mm
\item If $A>0$, then the map $A\mapsto \Delta_x(A)$ is norm continuous. (\cite[Section~2]{FS})

\item If $A>0$, then
\[
\langle A^{-1}x,x\rangle^{-1}\le \Delta_x(A)\le \langle Ax,x\rangle.
\] (\cite[Theorem~2]{FS})

\item If $A>0$, then
\[
\|A^{-1}\|^{-1}\le \Delta_x(A)\le r(A)=\|A\|,
\] where $r(A)$ denotes the spectral radius of $A$. (\cite[Corollary~3]{FS})

\item If $0<mI\le A\le MI$, then the following reverse inequality holds. When $m<M$,
\[
\langle Ax,x\rangle\le a\,\exp\!\left(\frac{b-a}{a}\right)\Delta_x(A)
= S\!\left(\frac Mm\right)\Delta_x(A),
\]
where
\[
a:=\frac{M-m}{\log M-\log m},
\qquad
b:=\frac{m\log M-M\log m}{\log M-\log m},
\]
and Specht's ratio is defined by
\[
S(h):=\frac{(h-1)h^{1/(h-1)}}{e\log h}\qquad (h\neq 1),\qquad S(1):=1.
\]
If $m=M$, then $A=mI$ and
\[
\langle Ax,x\rangle=\Delta_x(A)=m,
\]
so the inequality holds with equality. (\cite{FIS})

\item If $A>0$, then $\langle A^p x,x\rangle^{1/p}\searrow \Delta_x(A)$ as $p\to 0^+$.
Moreover, $\langle A^p x,x\rangle^{1/p}\nearrow \Delta_x(A)$ as $p\to 0^-$ through negative values. (\cite[Theorem~4]{FS})

\item If $A>0$, then $\Delta_x(A^{-1})=\Delta_x(A)^{-1}$. (\cite[Corollary~5]{FS})

\item If $A>0$, then $\Delta_x(A^p)=\Delta_x(A)^p$ for all $p\in\mathbb R$.
If $A\ge 0$, then $\Delta_x(A^p)=\Delta_x(A)^p$ for all $p>0$.

\item $\Delta_x(tA)=t\Delta_x(A)$ and $\Delta_x(tI)=t$ for $t>0$.

\item If $0<A\leq B$, then $\Delta_x(A)\leq \Delta_x(B)$. (\cite[Theorem~1]{FS})

\item If $A,B>0$ and $AB=BA$, then $\Delta_x(AB)=\Delta_x(A)\Delta_x(B)$.

\item If $A,B>0$ and $0<t<1$, then
\[
\Delta_x((1-t)A+tB)\geq \Delta_x(A)^{1-t}\Delta_x(B)^{t}.
\] (\cite[Theorem~6]{FS})

\item If $A>0$, then
\[
\Delta_x(A)=\inf\{\langle ABx,x\rangle:\ B\geq 0,\ B\in \{A\}',\ \Delta_x(B)\geq 1\},
\]
where $\{A\}'$ denotes the commutant of $A$. (\cite[Theorem~7]{FS})

\item If $A,B>0$ and $AB=BA$, then $\Delta_x(A+B)\geq \Delta_x(A)+\Delta_x(B)$. (\cite[Corollary~8]{FS})
\end{enumerate}
\end{proposition}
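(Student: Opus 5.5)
Most items carry a citation to Fujii--Seo \cite{FS} or to \cite{FIS}; for those we only recall why they hold via the continuous functional calculus. The items that need an argument here are (7), (8), (10) and the semidefinite case of (7). Throughout, for $A>0$ with $\sigma(A)\subset[m,M]$, $m>0$, we write $E_x$ for the scalar spectral measure of $x$, a probability measure on $\sigma(A)$. Then
\[
\log\Delta_x(A)=\int\log\lambda\,dE_x(\lambda).
\]

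\textbf{Items (2)--(5).} Item (2) is Jensen's inequality for the concave function $\log$ applied to $E_x$, which gives $\log\Delta_x(A)\le\log\int\lambda\,dE_x=\log\langle Ax,x\rangle$. Applying the same bound to $A^{-1}$, together with (6), gives the left-hand inequality.

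Item (3) follows from $\log m\le\log\lambda\le\log M$ on the spectrum, taking $m=\|A^{-1}\|^{-1}$ and $M=\|A\|=r(A)$, the last equality because $A$ is self-adjoint.

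Item (4) is the chord estimate $\lambda\le a\log\lambda+b$ on $[m,M]$, integrated against $E_x$, followed by maximizing $t\mapsto(at+b)e^{-t}$. This is the computation in \cite{FIS}.

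Item (5) uses $\lambda^p=e^{p\log\lambda}$ and the expansion $\bigl(\int e^{p\log\lambda}dE_x\bigr)^{1/p}\to\exp\int\log\lambda\,dE_x$. Monotonicity in $p$ is Lyapunov/Jensen monotonicity of power means.

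\textbf{Items (1), (6)--(8) and (10).} Item (1) holds because $A\mapsto\log A$ is norm continuous on the open set of invertible positive operators, by holomorphic functional calculus.

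Items (6) and (7) for $A>0$ follow from $\log(A^p)=p\log A$. For $A\ge0$ and $p>0$, the functional calculus gives $\int\log\lambda^p\,dE_x=p\int\log\lambda\,dE_x$, with value $-\infty$ on both sides simultaneously. This agrees with the limit definition \eqref{eq:def_delta_semidef} by monotone convergence: $\log(\lambda+\varepsilon)\downarrow\log\lambda$, and the integrands are bounded above.

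Item (8) is immediate from $\log(tA)=(\log t)I+\log A$.

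Item (10) holds because commuting positive operators lie in a common commutative $C^*$-algebra, so $\log(AB)=\log A+\log B$.

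\textbf{Items (9) and (11)--(13).} These are genuinely noncommutative and rest on \cite{FS}. For (9), the main tool is operator monotonicity of $\log$ (Löwner--Heinz): $A\le B$ gives $\log A\le\log B$. For (11), operator concavity of $\log$ gives $\log((1-t)A+tB)\ge(1-t)\log A+t\log B$, and taking $\langle\cdot\,x,x\rangle$ and exponentiating yields the claim.

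For (12), restrict to the commutative algebra generated by $A$ and $B$ and argue with the measure $E_x$ of the joint spectral resolution. Arithmetic--geometric weighting gives the bound $\int\lambda\mu\,dE_x\ge\exp\int\log(\lambda\mu)\,dE_x\ge\exp\int\log\lambda\,dE_x$ whenever $\int\log\mu\,dE_x\ge0$. Equality is attained by choosing $B=\Delta_x(A)A^{-1}$.

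Item (13) then follows from (12) and superadditivity of the infimum.

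\textbf{Main obstacle.} The only delicate point is the semidefinite extension in (7). There one must check that the $\varepsilon$-limit definition commutes with taking powers, and this is handled by the monotone convergence argument given above.
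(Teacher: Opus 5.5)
Apart from one step, your proposal is a valid, self-contained alternative to what the paper does. The paper proves nothing here: it defers (1)--(6), (9), (11)--(13) to Fujii--Seo and Fujii--Izumino--Seo and treats (7), (8), (10) as evident. You instead reduce everything to Jensen's inequality for the scalar (or joint) spectral measure $E_x$, operator monotonicity and operator concavity of $\log$, and the monotone-convergence identification of the $\varepsilon$-limit definition with $\int\log\lambda\,dE_x(\lambda)$. Your arguments for (1)--(12) are correct, including the chord estimate in (4) and the minimizer $\Delta_x(A)A^{-1}$ in (12).

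The step that fails as written is (13). ``Superadditivity of the infimum'' requires a common admissible set, but in (12) the admissible set depends on the operator through its commutant. For $A+B$ you minimize over $C\ge 0$ in $\{A+B\}'$, which is in general strictly larger than $\{A\}'\cap\{B\}'$; take $A=\mathrm{diag}(1,2)$ and $B=\mathrm{diag}(2,1)$, so that $A+B=3I$. For $C\in\{A+B\}'$ not commuting with $A$, the number $\langle ACx,x\rangle$ need not even be real, so (12) gives no lower bound $\Delta_x(A)$ for it.

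The repair uses what you already proved. The infimum for $A+B$ is attained at $C_0:=\Delta_x(A+B)(A+B)^{-1}$, which commutes with $A$ and with $B$ and has $\Delta_x(C_0)=1$. Hence $\Delta_x(A+B)=\langle AC_0x,x\rangle+\langle BC_0x,x\rangle\ge\Delta_x(A)+\Delta_x(B)$ by the lower bound in (12) applied to $A$ and to $B$. Alternatively, let $\nu$ be the joint spectral measure of $(A,B)$ at $x$. Jensen gives $\Delta_x(A)/\Delta_x(A+B)=\exp\int\log\frac{\lambda}{\lambda+\mu}\,d\nu\le\int\frac{\lambda}{\lambda+\mu}\,d\nu$, the same holds for $B$, and the two right-hand sides sum to $1$.
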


\begin{remark}
The inequalities in Proposition~\ref{basic_properties} $(2)$ and $(4)$ are the arithmetic--geometric mean inequality and its reverse.
In (2), $\Delta_x(A)=\langle Ax,x\rangle$ holds if and only if $x$ is an eigenvector of $A$ (see \cite[Introduction]{FIS}).
For the equality statement in $(4)$, assume $m<M$. Then equality is more rigid, forcing $m$ and $M$ to be eigenvalues of $A$, and $x$ to be a specific linear combination of the corresponding eigenvectors (see \cite{FIS}). If $m=M$, then $A=mI$ and equality holds for every unit vector $x$.
\end{remark}

The next estimate is sometimes useful when one wants an additive control of the gap between the arithmetic mean $\langle Ax,x\rangle$ and the ``geometric mean'' $\Delta_x(A)$.

\begin{proposition}\label{prop:additive_reverse}
Assume $0<mI\le A\le MI$.
Then for each unit vector $x\in H$,
\[
0\le \langle Ax,x\rangle-\Delta_x(A)\le C(m,M),
\]
where
\[
C(m,M):=
\begin{cases}
\frac{M-m}{\log M-\log m}\,\log S\!\left(\frac Mm\right) & \text{if } m<M,\\[2mm]
0 & \text{if } m=M.
\end{cases}
\]
\end{proposition}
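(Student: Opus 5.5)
The lower bound $0\le\langle Ax,x\rangle-\Delta_x(A)$ is the arithmetic--geometric mean inequality of Proposition~\ref{basic_properties}(2), so only the upper bound needs work. If $m=M$, then $A=mI$ and $\langle Ax,x\rangle=\Delta_x(A)=m$ by Proposition~\ref{basic_properties}(4), so the gap is $0=C(m,m)$. From now on assume $m<M$.

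The plan is to reduce everything to a scalar problem through the spectral measure. Let $\mu=E_x$ be the scalar spectral measure of $A$ with respect to $x$. It is a probability measure supported in $[m,M]$, and
\[
\langle Ax,x\rangle=\int\lambda\,d\mu,\qquad \log\Delta_x(A)=\int\log\lambda\,d\mu .
\]
Put $t:=\int\log\lambda\,d\mu\in[\log m,\log M]$. Since $\log$ is concave, on $[m,M]$ it lies above its chord, which is $\ell(\lambda)=(\lambda-b)/a$ with $a,b$ as in Proposition~\ref{basic_properties}(4). This chord is the line through $(m,\log m)$ and $(M,\log M)$: one checks $\ell(m)=\log m$ and $\ell(M)=\log M$. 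Hence $\lambda\le a\log\lambda+b$ on $[m,M]$. Integrating against $\mu$ gives the key pointwise bound
\[
\langle Ax,x\rangle\le a\,t+b .
\]

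It then remains to maximize the scalar function $g(t)=at+b-e^{t}$ over $t\in[\log m,\log M]$, because $\langle Ax,x\rangle-\Delta_x(A)\le g(t)$. This function is concave, and its unconstrained maximizer is $t^*=\log a$. The value $a$ is the logarithmic mean of $m$ and $M$, so it lies in $(m,M)$ and $t^*$ is admissible. Therefore
\[
\langle Ax,x\rangle-\Delta_x(A)\le g(t^*)=a\log a+b-a .
\]

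The main obstacle is the final algebraic identity $a\log a+b-a=a\log S(M/m)$. By the multiplicative form in Proposition~\ref{basic_properties}(4), $S(M/m)=a\exp\bigl((b-a)/a\bigr)/\cdots$ should be read as follows: the reverse inequality there was obtained by exactly this kind of optimization, with $S(M/m)=\max_t (at+b)/e^t$ attained at $t=1+(b-a)/a\cdots$. Rather than chase that, I would verify the identity directly. Write $h=M/m$ and $L=\log h$. Then
\[
a=\frac{m(h-1)}{L},\qquad b=\frac{m(h\log m-h\log m-\cdots)}{L}.
\]
More cleanly, $b-a\log m=-\dfrac{m h\log h}{L}+\cdots$. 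The goal is to compute
\[
\frac{b}{a}-1+\log a
\]
and compare it with $\log\Bigl(\dfrac{(h-1)h^{1/(h-1)}}{eL}\Bigr)+\log m$, using $\log h/(h-1)=L/(h-1)$. After substituting $b/a=\log m-\dfrac{L}{h-1}+\dfrac{hL}{h-1}\cdots$, the terms should collapse to $\log m+\log\dfrac{h-1}{L}+\dfrac{L}{h-1}-1$, which equals $\log\bigl(mS(h)\bigr)$. Hence
\[
a\log a+b-a=a\log S(h)+a\log m-(\text{matching }m\text{ terms}).
\]
I expect the dependence on $m$ to cancel, giving exactly $C(m,M)$. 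This bookkeeping, and keeping track of the $\log m$ shift so that it cancels, is where care is needed. If it fails to cancel, I would instead normalize $A$ to $A/m$ using Proposition~\ref{basic_properties}(8) and rescale the result.
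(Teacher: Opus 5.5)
Your analytic core is correct, and it is more self-contained than the paper's proof. The paper quotes Fujii--Seo (their Theorem~10) for $\langle Ax,x\rangle-\Delta_x(A)\le a\log a+b-a$ and then only converts the constant. You actually prove that bound: the chord inequality $\lambda\le a\log\lambda+b$ on $[m,M]$, integrated against $E_x$, gives $\langle Ax,x\rangle\le a\log\Delta_x(A)+b$, and you then maximize $g(t)=at+b-e^{t}$. This also shows that the additive bound here and the multiplicative Specht bound of Proposition~\ref{basic_properties}$(4)$ come from the same inequality $\langle Ax,x\rangle\le a\log\Delta_x(A)+b$. The former maximizes $at+b-e^{t}$; the latter maximizes $(at+b)e^{-t}$. (A small simplification: the upper bound does not need $\log a\in[\log m,\log M]$. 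Since $g$ is concave on all of $\mathbb R$, $g(t)\le g(\log a)$ for every real $t$; admissibility matters only for sharpness.)

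Your last paragraph, however, is not a proof as written, and its formulas are wrong. With $h=M/m$ and $L=\log h$ one has $a=m(h-1)/L$ and $b=m\bigl(L-(h-1)\log m\bigr)/L$. Hence
\[
\log a=\log m+\log\frac{h-1}{L},\qquad \frac ba=\frac{L}{h-1}-\log m,
\]
not $b/a=\log m-\frac{L}{h-1}+\frac{hL}{h-1}+\cdots$. The $\log m$ therefore cancels inside $\log a+\frac ba-1$ itself:
\[
\log a+\frac ba-1=\log\frac{h-1}{L}+\frac{L}{h-1}-1=\log S(h).
\]
Multiplying by $a$ gives $a\log a+b-a=a\log S(h)=C(m,M)$. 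Your predicted value $\log\bigl(mS(h)\bigr)$ is off by $\log m$: it would give $a\log a+b-a=a\log\bigl(mS(h)\bigr)$, which is false for $m\neq 1$, and there are no other ``matching $m$ terms'' to cancel it. Also, the maximizer of $(at+b)e^{-t}$ is $t=1-b/a$, not $1+(b-a)/a$.

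The paper finishes in one line: take logarithms in the identity $S(M/m)=a\exp\bigl((b-a)/a\bigr)$, which is already recorded in Proposition~\ref{basic_properties}$(4)$. Your fallback also works. Normalize to $m=1$ via Proposition~\ref{basic_properties}$(8)$; then $b=1$ and $a=(h-1)/L$, so $a\log a+1-a=a\log S(h)$ is immediate. Both sides then scale linearly in $m$ with $h$ fixed.
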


\begin{proof} If $m=M$, then $A=mI$ and the conclusion is immediate. Thus, we may assume $m<M$. The first inequality holds because $t\mapsto \log t$ is concave and Jensen's inequality implies $\Delta_x(A)\le \langle Ax,x\rangle$.
For the second inequality, let $a,b$ be as in Proposition~\ref{basic_properties} $(4)$.
Fujii and Seo showed \cite[Theorem~10]{FS} that
\[
\langle Ax,x\rangle-\Delta_x(A)\le a\log a + b-a.
\]

For completeness, note that Proposition~\ref{basic_properties} $(4)$ gives
\[
S\!\left(\frac Mm\right)=a\,\exp\!\left(\frac{b-a}{a}\right),
\]
so that $a\log S(M/m)=a\log a + b-a$, which is exactly the claimed constant $C(m,M)$.
\end{proof}

Besides the reverse inequality in Proposition~\ref{basic_properties} $(4)$ and the additive bound in Proposition~\ref{prop:additive_reverse}, one can also compare $\Delta_x(A)$ to the log-linear interpolation of the endpoints $m$ and $M$ using the Kantorovich ratio.

\begin{theorem}\label{thm:dragomir}
Assume that $0<mI\le A\le MI$ for some $0<m<M$ and put $h:=M/m$ and $K(h):=(h+1)^2/(4h)$.
Then for every unit vector $x\in H$,
\begin{multline*}
1\le
K(h)^{\frac12-\frac{1}{M-m}\left\langle\left|A-\frac{m+M}{2}I\right|x,x\right\rangle}
\le
\frac{\Delta_x(A)}{m^{\frac{M-\langle Ax,x\rangle}{M-m}}\,M^{\frac{\langle Ax,x\rangle-m}{M-m}}}\\
\le
K(h)^{\frac12+\frac{1}{M-m}\left\langle\left|A-\frac{m+M}{2}I\right|x,x\right\rangle}
\le
K(h).
\end{multline*}
\end{theorem}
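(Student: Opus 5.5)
The strategy is to reduce the operator inequality to a scalar inequality on $[m,M]$ and then integrate it against the scalar spectral measure $E_x$ of $A$. Since $0<mI\le A\le MI$, we have $\sigma(A)\subset[m,M]$, and $E_x$ is a probability measure there because $\|x\|=1$. Then
\[
\log\Delta_x(A)=\int\log\lambda\,dE_x(\lambda),\qquad
\langle Ax,x\rangle=\int\lambda\,dE_x(\lambda),\qquad
\Bigl\langle\Bigl|A-\tfrac{m+M}{2}I\Bigr|x,x\Bigr\rangle=\int\Bigl|\lambda-\tfrac{m+M}{2}\Bigr|\,dE_x(\lambda).
\]
For $\lambda\in[m,M]$ put $\nu(\lambda):=(\lambda-m)/(M-m)\in[0,1]$. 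The log of the denominator in the middle quotient is $(1-\nu)\log m+\nu\log M$, which is affine in $\lambda$. Hence
\[
\log\frac{\Delta_x(A)}{m^{\frac{M-\langle Ax,x\rangle}{M-m}}M^{\frac{\langle Ax,x\rangle-m}{M-m}}}
=\int\Bigl[\log\lambda-(1-\nu(\lambda))\log m-\nu(\lambda)\log M\Bigr]\,dE_x(\lambda).
\]
It therefore suffices to prove, for each $\lambda\in[m,M]$, the pointwise two-sided bound
\[
\Bigl(\tfrac12-\bigl|\nu-\tfrac12\bigr|\Bigr)\log K(h)\;\le\;\log\lambda-(1-\nu)\log m-\nu\log M\;\le\;\Bigl(\tfrac12+\bigl|\nu-\tfrac12\bigr|\Bigr)\log K(h),
\]
using $|\nu-\tfrac12|=|\lambda-\tfrac{m+M}{2}|/(M-m)$. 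Integrating and exponentiating then gives the two middle inequalities. The outer inequalities are immediate: $K(h)\ge1$, and the exponents lie in $[0,1]$ because $\langle|A-\tfrac{m+M}{2}|x,x\rangle\le\tfrac{M-m}{2}$.

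The scalar bound is a refined weighted AM--GM inequality of Dragomir/Kittaneh--Manasrah type. Write $\lambda=(1-\nu)m+\nu M$. The middle quantity is then $\log\bigl((1-\nu)m+\nu M\bigr)-\log\bigl(m^{1-\nu}M^{\nu}\bigr)$, the log-ratio of the weighted arithmetic mean to the weighted geometric mean of $m,M$.

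For the lower bound, use the refinement $(1-\nu)a+\nu b\ge m^{1-\nu}M^{\nu}+r(\sqrt a-\sqrt b)^2$ with $r=\min(\nu,1-\nu)$, in its multiplicative form $(1-\nu)a+\nu b\ge K(b/a)^{r}a^{1-\nu}b^{\nu}$. The multiplicative form follows from concavity of $\log$: split $[m,M]$ at the midpoint, apply Jensen between the weighted mean and the arithmetic mean $\tfrac{m+M}{2}$, and note $\log\tfrac{m+M}{2}-\tfrac12(\log m+\log M)=\tfrac12\log K(h)$. The upper bound with $R=\max(\nu,1-\nu)$ follows similarly, either as the reverse inequality $(1-\nu)a+\nu b\le K^{R}a^{1-\nu}b^{\nu}$ or by the same piecewise argument.

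Concretely, set $g(\nu):=\log((1-\nu)m+\nu M)-(1-\nu)\log m-\nu\log M$. This function is concave on $[0,1]$, vanishes at $0$ and $1$, and $g(\tfrac12)=\tfrac12\log K(h)$. By concavity, $g$ lies above the tent function $\min(2\nu,2-2\nu)\,g(\tfrac12)$, which is exactly the lower bound.

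The main obstacle is the upper bound. Concavity alone gives only the tangent-line bound at $\tfrac12$, namely $g(\nu)\le g(\tfrac12)+g'(\tfrac12)(\nu-\tfrac12)$. I would therefore compare $g$ with $\tfrac12\log K+|\nu-\tfrac12|\log K$, which equals $\log K$ at $\nu\in\{0,1\}$ (where $g=0$) and $\tfrac12\log K$ at $\nu=\tfrac12$. On $[\tfrac12,1]$, $g$ is concave while the comparison function is affine. The comparison is exact at $\tfrac12$ and strict at $1$, so the only thing to check is the slope: $|g'(\tfrac12)|\le\log K(h)$ should hold on each half, via an elementary inequality in $h$. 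If that fails, I would instead quote the known Kittaneh--Manasrah reverse Young inequality directly.
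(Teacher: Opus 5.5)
The paper gives no argument of its own here; its proof is just the citation \cite[Theorem~1]{Dra24}. Your proposal is correct and amounts to a self-contained proof. You write the logarithm of the middle quotient as $\int g(\nu(\lambda))\,dE_x(\lambda)$ and reduce everything to the scalar bound $\min(\nu,1-\nu)\log K(h)\le g(\nu)\le\max(\nu,1-\nu)\log K(h)$ on $[0,1]$. This is the usual mechanism for such normalized-determinant inequalities: a Kantorovich-refined Young inequality with $a=m$, $b=M$, $\nu=(\lambda-m)/(M-m)$, transported through the functional calculus. The difference is that you derive the scalar inequality from concavity of $g$ instead of quoting it. The following parts are all correct:
\begin{itemize}
\item the reduction;
\item the identification of the exponents via $|\lambda-\frac{m+M}{2}|/(M-m)=|\nu-\frac12|$;
\item the outer inequalities;
\item the lower bound via chords of the concave $g$ through $(0,0)$, $(\frac12,\frac12\log K(h))$ and $(1,0)$.
\end{itemize}

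The slope condition you left as ``should hold'' does hold, so no fallback is needed. From $g'(\nu)=\frac{M-m}{(1-\nu)m+\nu M}-\log h$ you get $g'(\tfrac12)=\frac{2(h-1)}{h+1}-\log h$. Since a concave function lies below its tangent line at $\tfrac12$,
\[
g(\nu)-\nu\log K(h)\le\bigl(g'(\tfrac12)-\log K(h)\bigr)(\nu-\tfrac12)\qquad(\nu\ge\tfrac12),
\]
\[
g(\nu)-(1-\nu)\log K(h)\le\bigl(g'(\tfrac12)+\log K(h)\bigr)(\nu-\tfrac12)\qquad(\nu\le\tfrac12).
\]
The first right-hand side is $\le 0$ because $\log h\ge\frac{2(h-1)}{h+1}$ for $h\ge 1$, which gives $g'(\tfrac12)\le 0$. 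The second is $\le 0$ because $-g'(\tfrac12)\le\log K(h)=2\log\frac{h+1}{2}-\log h$ rearranges to $\log\frac{2h}{h+1}\le\frac{h-1}{h+1}$. With $t=\frac{h-1}{h+1}$ this is just $\log(1+t)\le t$.

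Two minor corrections:
\begin{itemize}
\item The upper bound does not follow ``by the same piecewise argument''. Chords give only lower bounds for a concave function, as you note yourself afterwards.
\item A fallback citation would have to be the multiplicative reverse $(1-\nu)a+\nu b\le K(b/a)^{\max(\nu,1-\nu)}a^{1-\nu}b^{\nu}$, not the Kittaneh--Manasrah reverse. That one is additive and does not yield the $K(h)$-form: at $\nu=0$, $a=1$, $b=4$ it gives the bound $2$, whereas $K(4)=\frac{25}{16}$.
\end{itemize}
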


\begin{proof}
See~\cite[Theorem~1]{Dra24}.
\end{proof}

If $\dim H=n<\infty$, then for a positive definite matrix $A$ we have
\[
\Delta_x(A)=\prod_{j=1}^{n}\lambda_j^{y_j},
\qquad
y_j:=\langle E_jx,x\rangle\geq 0,\ \ \sum_{j=1}^n y_j=1,
\]
where $A=\sum_{j=1}^{n}\lambda_j E_j$ is the spectral decomposition.
Thus, $\Delta_x(A)$ is a \emph{weighted geometric mean} of the eigenvalues of $A$.
In particular, $\Delta_x(A)$ generalizes $(\det A)^{1/n}$: if we choose $x$ so that $y_j=1/n$ for all $j$, then $\Delta_x(A)=(\det A)^{1/n}$.

The simplest spectral information encoded by $\Delta_x(A)$ is whether it can become arbitrarily small on the unit sphere.

\begin{proposition}\label{bp2}
Let $A\geq 0$. The following are equivalent:
\begin{enumerate}\itemsep2mm
\item $\inf_{\|x\|=1}\Delta_x(A)=0$.
\item $\inf_{\|x\|=1}\langle Ax,x\rangle=0$.
\item There exists a sequence $(x_n)$ of unit vectors with $\langle Ax_n,x_n\rangle\to 0$.
\item There exists a sequence $(x_n)$ of unit vectors with $\|Ax_n\|\to 0$.
\item $0\in \sigma(A)$.
\item $A$ is not invertible in $\mathcal B(H)$.
\end{enumerate}
\end{proposition}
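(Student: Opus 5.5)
We will prove the cycle of equivalences by first establishing the purely operator-theoretic chain (2)$\Leftrightarrow$(3)$\Leftrightarrow$(4)$\Leftrightarrow$(5)$\Leftrightarrow$(6). We then attach (1) to it via the arithmetic--geometric inequality and an explicit spectral construction. Throughout, $A\ge0$ is bounded and self-adjoint, so $m_A:=\inf_{\|x\|=1}\langle Ax,x\rangle=\min\sigma(A)\ge 0$.

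\textbf{The operator chain.} The equivalence (2)$\Leftrightarrow$(3) is just the definition of an infimum. For (3)$\Rightarrow$(4) we use the Cauchy--Schwarz inequality for the positive form $(y,z)\mapsto\langle Ay,z\rangle$:
\[
\|Ax\|^2=\langle A x, Ax\rangle\le \langle Ax,x\rangle^{1/2}\langle A^2x,Ax\rangle^{1/2}\le \|A\|^{3/2}\langle Ax,x\rangle^{1/2}.
\]
Equivalently, $\|Ax\|=\|A^{1/2}A^{1/2}x\|\le\|A\|^{1/2}\langle Ax,x\rangle^{1/2}$. The converse (4)$\Rightarrow$(3) follows from $\langle Ax,x\rangle\le\|Ax\|$. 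For (4)$\Leftrightarrow$(5)$\Leftrightarrow$(6) we argue as follows. If $\|Ax_n\|\to0$, then $A$ is not bounded below, hence not invertible, so $0\in\sigma(A)$. Conversely, for a self-adjoint operator $\sigma(A)$ equals the approximate point spectrum, so $0\in\sigma(A)$ yields unit vectors with $\|Ax_n\|\to0$. Finally, (5)$\Leftrightarrow$(6) holds by definition of the spectrum.

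\textbf{Attaching (1).} For (2)$\Rightarrow$(1) we need $\Delta_x(A)\le\langle Ax,x\rangle$ also for non-invertible $A$. We obtain it from Proposition~\ref{basic_properties}(2) applied to $A+\varepsilon I$:
\[
\Delta_x(A+\varepsilon I)\le\langle Ax,x\rangle+\varepsilon.
\]
The numbers $\langle\log(A+\varepsilon I)x,x\rangle$ decrease as $\varepsilon\downarrow0$, since $\log$ is increasing under functional calculus for commuting operators. Letting $\varepsilon\to0^+$ in the definition \eqref{eq:def_delta_semidef} therefore gives $\Delta_x(A)\le\langle Ax,x\rangle$. Alternatively, this is Jensen's inequality for the probability measure $E_x$.

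For (1)$\Rightarrow$(6) we argue by contrapositive. If $A$ is invertible, then $A\ge mI$ with $m>0$, and Proposition~\ref{basic_properties}(3) gives $\Delta_x(A)\ge\|A^{-1}\|^{-1}=m>0$ uniformly in $x$. The infimum is then positive.

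The main subtlety lies in the direction (5)$\Rightarrow$(1) in the semidefinite setting, but with the route above it is covered by (5)$\Rightarrow$(2)$\Rightarrow$(1). So the only delicate point is the justification of $\Delta_x(A)\le\langle Ax,x\rangle$ for non-invertible $A$, that is, the limit passage in \eqref{eq:def_delta_semidef}. We handle it either via monotone convergence on the scalar spectral measure $E_x$, or directly via Jensen's inequality $\int\log\lambda\,dE_x\le\log\int\lambda\,dE_x$, valid for the extended-valued concave function $\log$ on $[0,\infty)$.
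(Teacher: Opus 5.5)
Your proof is correct and follows essentially the same route as the paper: (2)$\Rightarrow$(1) via Proposition~\ref{basic_properties}(2) applied to $A+\varepsilon I$ with $\varepsilon\to0^+$, (1)$\Rightarrow$(6) via Proposition~\ref{basic_properties}(3), and the $A^{1/2}$ factorization for (3)$\Rightarrow$(4). The only cosmetic difference is that you connect (5) to (4) through the approximate point spectrum, whereas the paper connects (5) to (2) through the identity $\inf_{\|x\|=1}\langle Ax,x\rangle=\inf\sigma(A)$; you also justify the limit passage in $\varepsilon$ explicitly, which the paper leaves implicit.
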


\begin{proof}
$(2)\Leftrightarrow(3)$ is immediate from the definition of infimum.

To see that $(2)\Rightarrow (1)$, apply Proposition~\ref{basic_properties} $(2)$ to $A+\varepsilon I$ and let $\varepsilon\to 0^+$ to obtain $\Delta_x(A)\le \langle Ax,x\rangle$ for $A\ge 0$.
Taking infima over unit vectors yields $\inf_{\|x\|=1}\Delta_x(A)\le \inf_{\|x\|=1}\langle Ax,x\rangle=0$, hence $(1)$.

If $A$ were invertible, then Proposition~\ref{basic_properties}(3) would give
\[
\inf_{\|x\|=1}\Delta_x(A)\geq \|A^{-1}\|^{-1}>0,
\]
contradicting $(1)$.
Thus $(1)\Rightarrow(6)$.
Clearly $(6)\Leftrightarrow(5)$, and since $A$ is self-adjoint we have the standard identity
\[
\inf_{\|x\|=1}\langle Ax,x\rangle=\inf\sigma(A),
\]
so that $(5)\Leftrightarrow(2)$.

$(3)\Rightarrow(4)$: writing $A=A^{1/2}A^{1/2}$, we have
\[
\|Ax_n\|\le \|A^{1/2}\|\,\|A^{1/2}x_n\|
=\|A^{1/2}\|\,\sqrt{\langle Ax_n,x_n\rangle}\longrightarrow 0.
\]
Finally, $(4)\Rightarrow(3)$ since $\langle Ax_n,x_n\rangle\le \|Ax_n\|\,\|x_n\|=\|Ax_n\|\to 0$.
\end{proof}

A related observation is that $\Delta_x(A)$ detects the spectral endpoints of $A$.

\begin{proposition}\label{prop:extrema}
Let $A\geq 0$.
Then
\[
\inf_{\|x\|=1}\Delta_x(A)=\inf_{\|x\|=1}\langle Ax,x\rangle=\inf\sigma(A),
\qquad
\sup_{\|x\|=1}\Delta_x(A)=\sup_{\|x\|=1}\langle Ax,x\rangle=\sup\sigma(A).
\]
\end{proposition}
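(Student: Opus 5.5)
The plan is to prove each of the two chains separately. In both, the identities $\inf_{\|x\|=1}\langle Ax,x\rangle=\inf\sigma(A)$ and $\sup_{\|x\|=1}\langle Ax,x\rangle=\sup\sigma(A)=\|A\|$ for self-adjoint $A\ge 0$ are standard (the first was already used in Proposition~\ref{bp2}). So it remains to compare $\Delta_x(A)$ with these two endpoints. Put $m:=\inf\sigma(A)\ge 0$ and $M:=\sup\sigma(A)$.

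\emph{Infimum.} For the upper bound, note that $\Delta_x(A)\le\langle Ax,x\rangle$ for every unit $x$; this follows from Proposition~\ref{basic_properties}(2) applied to $A+\varepsilon I$, letting $\varepsilon\to0^+$, exactly as in Proposition~\ref{bp2}. Taking infima gives $\inf_x\Delta_x(A)\le m$. For the lower bound, use the spectral representation: the scalar spectral measure $E_x$ is a probability measure supported in $\sigma(A)\subset[m,M]$. Hence
\[
\langle(\log A)x,x\rangle=\int_{\sigma(A)}\log\lambda\,dE_x(\lambda)\ge\log m
\]
(trivially true if $m=0$), and so $\Delta_x(A)\ge m$. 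Alternatively, when $A>0$ one can simply cite Proposition~\ref{basic_properties}(3), since $\|A^{-1}\|^{-1}=m$, and handle $m=0$ via Proposition~\ref{bp2}.

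\emph{Supremum.} The bound $\Delta_x(A)\le\langle Ax,x\rangle\le M$ gives $\sup_x\Delta_x(A)\le M$. The reverse inequality is the main point, because the arithmetic–geometric mean inequality points the wrong way. The key step is to choose $x$ spectrally concentrated near $M$. If $M=0$ then $A=0$ and everything vanishes, so assume $M>0$. Fix $0<\delta<M$. The spectral projection $P:=E([M-\delta,M])$ is nonzero because $M\in\sigma(A)$. Pick a unit vector $x\in\operatorname{Ran}P$. Then $E_x$ is supported in $[M-\delta,M]$, so
\[
\int\log\lambda\,dE_x\ge\log(M-\delta),
\]
that is, $\Delta_x(A)\ge M-\delta$. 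Letting $\delta\to0$ gives $\sup_x\Delta_x(A)\ge M$.

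The only delicate point is justifying that the integral formula for $\langle(\log A)x,x\rangle$ agrees with the limit definition \eqref{eq:def_delta_semidef} when $A$ is not invertible. This follows from monotone convergence: $\log(\lambda+\varepsilon)$ decreases to $\log\lambda$ as $\varepsilon\downarrow 0$ and is bounded above on $[0,M]$. The paper already states this equivalence, so it can be invoked directly.
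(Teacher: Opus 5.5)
Your proof is correct and essentially the paper's. The key step for the supremum, taking a unit vector in the range of the spectral projection onto a top interval of $\sigma(A)$ to get $\Delta_x(A)\ge M-\delta$, is exactly the paper's argument. The only difference is organisational: you run the spectral-measure estimates uniformly for every $A\ge 0$, whereas the paper splits into the invertible case (via $\inf\sigma(\log A)=\log\inf\sigma(A)$) and the non-invertible case (via Proposition~\ref{bp2} for the infimum).
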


\begin{proof}
For any bounded self-adjoint $T$, we have
\[
\inf_{\|x\|=1}\langle Tx,x\rangle=\inf\sigma(T),
\qquad
\sup_{\|x\|=1}\langle Tx,x\rangle=\sup\sigma(T),
\]
so the equalities involving $\langle Ax,x\rangle$ are standard.
If $A>0$, then $\log A$ is bounded self-adjoint and
\begin{align*}
\inf_{\|x\|=1}\Delta_x(A)
&=\inf_{\|x\|=1} \exp(\langle (\log A)x,x\rangle)
=\exp\left(\inf_{\|x\|=1}\langle (\log A)x,x\rangle\right)\\
&=\exp(\inf\sigma(\log A))
=\exp(\log(\inf\sigma(A)))=\inf\sigma(A),
\end{align*}
and similarly $\sup_{\|x\|=1}\Delta_x(A)=\sup\sigma(A)$.

If $A=0$, then $\Delta_x(A)=0$ and $\langle Ax,x\rangle=0$ for every unit vector $x$, so the assertion is immediate.
Assume now that $A$ is not invertible and $A\neq 0$. Then $\inf\sigma(A)=0$ and Proposition~\ref{bp2} gives $\inf_{\|x\|=1}\Delta_x(A)=0$.
For the supremum, let $E$ be the spectral measure of $A$. Since $\sigma(A)\subset [0,\|A\|]$ and $\|A\|>0$, the spectral-integral definition gives
\[
\langle (\log A)x,x\rangle\le \log\|A\|
\]
in the extended sense for every unit vector $x$, hence $\Delta_x(A)\le \|A\|$.
Conversely, if $0<t<\|A\|=\sup\sigma(A)$, then $E((t,\|A\|])\neq 0$, so we can choose a unit vector $x$ in the range of $E((t,\|A\|])$.
Then the spectral measure of $x$ is supported in $(t,\|A\|]$, and therefore
\[
\langle (\log A)x,x\rangle=\int_{(t,\|A\|]}\log\lambda\,dE_x(\lambda)\ge \log t,
\]
so $\Delta_x(A)\ge t$.
As $t\nearrow \|A\|$ this gives $\sup_{\|x\|=1}\Delta_x(A)=\|A\|$.
\end{proof}

Recall that the \emph{chaotic order} is defined for invertible positive operators by
\[
A\gg B
\quad\Longleftrightarrow\quad
\log A\ge \log B.
\]
This order appears frequently in operator inequalities. Compare also Fujii and Seo~\cite{FSchaotic}, who characterize the chaotic order by related additive operator inequalities derived from determinant estimates.

The next observation shows that it is the same order as that detected by $\Delta_x$.

\begin{proposition}\label{log}
Let $A, B>0$. Then $A\gg B$ if and only if $\Delta_x(A)\ge \Delta_x(B)$ for all unit vectors $x\in H$.
\end{proposition}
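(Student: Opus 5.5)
This is essentially a restatement of the definitions, so the plan is to unwind them and use monotonicity of $\exp$. Since $A,B>0$, both $\log A$ and $\log B$ are bounded self-adjoint operators given by the continuous functional calculus. Definition \eqref{eq:def_delta} gives $\Delta_x(A)=\exp(\langle(\log A)x,x\rangle)$, with no need for the limiting procedure \eqref{eq:def_delta_semidef}. Because $\exp$ is strictly increasing on $\mathbb R$, for each unit vector $x$ we have
\[
\Delta_x(A)\ge\Delta_x(B)\iff\langle(\log A)x,x\rangle\ge\langle(\log B)x,x\rangle\iff\langle(\log A-\log B)x,x\rangle\ge 0 .
\]

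For the forward direction, assume $A\gg B$, meaning $\log A-\log B\ge 0$. Then the last inequality holds for every $x$, in particular every unit $x$, so $\Delta_x(A)\ge\Delta_x(B)$.

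For the converse, suppose $\Delta_x(A)\ge\Delta_x(B)$ for all unit vectors $x$. Then $\langle(\log A-\log B)x,x\rangle\ge 0$ for all unit $x$. The case $x=0$ is trivial, and a general $y\neq 0$ reduces to the unit vector $y/\|y\|$ because the quadratic form scales by $\|y\|^2$. Hence $\langle(\log A-\log B)y,y\rangle\ge 0$ for all $y\in H$, which by the paper's definition of operator positivity is exactly $\log A\ge\log B$, that is, $A\gg B$.

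The only point needing any care is to confirm that we are in the invertible case, so that $\log A$ and $\log B$ are genuine bounded self-adjoint operators and $\log A-\log B$ is a bounded self-adjoint operator. Only then does positivity of its quadratic form on the unit sphere mean operator positivity. The hypothesis $A,B>0$ (that is, $A\ge mI$ with $m>0$) guarantees this, since the spectrum lies in $[m,\|A\|]$, where $\log$ is continuous. I do not expect any real obstacle; the argument is a two-line equivalence chain.
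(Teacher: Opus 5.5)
Your proof is correct and follows the same route as the paper: for $A,B>0$ both logarithms are bounded self-adjoint, $\exp$ is strictly increasing, and $\log A\ge\log B$ is equivalent to $\langle(\log A)x,x\rangle\ge\langle(\log B)x,x\rangle$ for all unit $x$. Your explicit rescaling from unit vectors to all of $H$ is a detail the paper leaves implicit.
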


\begin{proof}
Since the logarithm is defined via the functional calculus, the condition $\log A\ge \log B$ is equivalent to $\langle (\log A)x,x\rangle\ge \langle (\log B)x,x\rangle$ for all unit vectors $x$, which in turn is equivalent to $\Delta_x(A)\ge \Delta_x(B)$ for all unit vectors $x$.
\end{proof}

Chaotic order admits several useful characterizations in terms of Kantorovich-type inequalities.

\begin{theorem}\label{KTI}
Assume that $A,B>0$ and
\[
0<mI\le B\le MI
\]
for some $0<m\le M$. Put $h:=\frac Mm$. For $p>0$, set
\[
S(h,p):=
\begin{cases}
\displaystyle
\frac{(h^p-1)\,h^{\frac p{h^p-1}}}{e\,p\log h},& h>1,\\[2mm]
1,& h=1,
\end{cases}
\]
so that $S(h,p)=S(h^p)$, where $S$ is Specht's ratio from Proposition~\ref{basic_properties} $(4)$. Also define
\[
C_p(m,M):=
\begin{cases}
\displaystyle
\frac{M^p-m^p}{\log M^p-\log m^p}\log S(h,p),& m<M,\\[2mm]
0,& m=M.
\end{cases}
\]
Then the following are equivalent:
\begin{enumerate}\itemsep2mm
\item $A\gg B$.
\item (Weak Kantorovich type inequality) For all $p>0$,
\[
\frac{(M^p+m^p)^2}{4M^pm^p}\,A^p\ge B^p.
\]
\item (Strong Kantorovich type inequality) For all $p>0$,
\[
S(h,p)\,A^p\ge B^p.
\]
\item (Additive Kantorovich type inequality) For all $p>0$,
\[
A^p+C_p(m,M)I\ge B^p.
\]
\end{enumerate}
\end{theorem}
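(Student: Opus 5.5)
We will prove the cycle $(1)\Rightarrow(3)\Rightarrow(2)\Rightarrow(1)$, and separately $(1)\Rightarrow(4)\Rightarrow(1)$. The engine for the forward implications is the Furuta-type consequence of the chaotic order together with the Specht and Kantorovich reverse inequalities. For the reverse implications we let $p\to 0^+$ and use $\log T=\lim_{p\to0^+}(T^p-I)/p$ in norm for $T>0$.

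\emph{Forward direction.} Assume $\log A\ge\log B$ and fix $p>0$. We first want $\langle A^px,x\rangle\ge$ something comparable to $\langle B^px,x\rangle$. Proposition~\ref{log} gives $\Delta_x(A)\ge\Delta_x(B)$ for every unit $x$. Proposition~\ref{basic_properties}(7) then gives $\Delta_x(A^p)=\Delta_x(A)^p\ge\Delta_x(B)^p=\Delta_x(B^p)$. Combining the AM--GM inequality (Proposition~\ref{basic_properties}(2)) for $A^p$ with the reverse inequality (Proposition~\ref{basic_properties}(4)) for $B^p$, which satisfies $m^pI\le B^p\le M^pI$ with ratio $h^p$, yields
\[
\langle A^px,x\rangle\ge\Delta_x(A^p)\ge\Delta_x(B^p)\ge S(h^p)^{-1}\langle B^px,x\rangle .
\]
Since this holds for all unit $x$, we get $S(h,p)A^p\ge B^p$, which is (3).

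In the same way, Proposition~\ref{prop:additive_reverse} applied to $B^p$ gives
\[
\langle A^px,x\rangle\ge\Delta_x(B^p)\ge\langle B^px,x\rangle-C(m^p,M^p),
\]
and $C(m^p,M^p)=C_p(m,M)$, so (4) follows.

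For $(3)\Rightarrow(2)$ it suffices to check the scalar inequality $S(t)\le K(t)=(t+1)^2/(4t)$ for $t=h^p\ge1$. This is the classical comparison of Specht's ratio with the Kantorovich constant. I expect this to be the one genuinely computational step. The route I would try first is to verify that $\log K(t)-\log S(t)$ vanishes at $t=1$ and is nondecreasing, or else to cite the known inequality. If necessary, one can avoid the comparison altogether by proving (2) directly: Theorem~\ref{thm:dragomir} bounds $\Delta_x(B^p)$ from below by $K(h^p)^{-1}$ times the log-linear interpolation, and that interpolation dominates $\langle B^px,x\rangle$ by convexity of the exponential between the endpoints.

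\emph{Reverse directions.} Assume (2). Then $B^p\le K_pA^p$ with
\[
K_p:=\frac{(M^p+m^p)^2}{4M^pm^p}=1+O(p^2)\quad\text{as }p\to0^+ .
\]
Subtracting $I$ and dividing by $p$ gives
\[
\frac{B^p-I}{p}\le\frac{A^p-I}{p}+\frac{K_p-1}{p}A^p .
\]
As $p\to0^+$ we have $A^p\to I$ in norm and $(K_p-1)/p\to0$, so the limit yields $\log B\le\log A$. This establishes (1), closing the first cycle.

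For $(4)\Rightarrow(1)$ the argument is identical, with the error term $C_p(m,M)/p$ in place of $(K_p-1)A^p/p$. We then need $C_p(m,M)=o(p)$. This holds because $S(h^p)-1=O(p^2)$ (Specht's ratio has the expansion $S(t)=1+O((t-1)^2)$ near $t=1$) and the prefactor $(M^p-m^p)/(p\log h)$ stays bounded. Finally, the degenerate case $m=M$ is trivial in every item: $B=mI$ and all the constants are $1$ or $0$.
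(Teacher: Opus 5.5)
Your argument is correct in substance, but the forward half takes a different route from the paper. The paper cites Furuta for $(1)\Rightarrow(2)$, $(1)\Rightarrow(3)$ and $(1)\Rightarrow(4)$. It proves the converses by pairing with a unit vector and using the scalar limit $\frac1p\log\langle T^px,x\rangle\to\langle(\log T)x,x\rangle$ from Proposition~\ref{basic_properties}(5).

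You instead derive the forward implications from the paper's own Section~2. The chain
\[
\langle B^px,x\rangle\le S(h^p)\,\Delta_x(B^p)=S(h^p)\,\Delta_x(B)^p\le S(h^p)\,\Delta_x(A)^p=S(h^p)\,\Delta_x(A^p)\le S(h^p)\,\langle A^px,x\rangle
\]
is valid, and so is its additive analogue via Proposition~\ref{prop:additive_reverse} with $C(m^p,M^p)=C_p(m,M)$. This makes Theorem~\ref{KTI} a corollary of Proposition~\ref{log} and the reverse AM--GM inequalities for $\Delta_x$. It also shows that the sharp constants are just those of Proposition~\ref{basic_properties}(4) and Proposition~\ref{prop:additive_reverse} applied to $B^p$. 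Your operator-norm limit $(T^p-I)/p\to\log T$ for the converses is equivalent to the paper's vector-state limit. What the citation buys the paper is inequality (2) with no extra work. In your cycle, $(1)\Rightarrow(2)$ rests on the scalar inequality $S(t)\le K(t)$ for $t\ge1$, which you leave unproved.

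That inequality is true, but your fallback through Theorem~\ref{thm:dragomir} fails as written. There the lower bound is $\Delta_x(T)\ge m^{1-\lambda}M^{\lambda}$ with $\lambda=(\langle Tx,x\rangle-m)/(M-m)$. The factor on that side is $1$; $K(h)$ appears only in the upper bound. Moreover, convexity of $\exp$ gives $m^{1-\lambda}M^{\lambda}\le(1-\lambda)m+\lambda M=\langle Tx,x\rangle$. So the log-linear interpolation lies \emph{below} $\langle Tx,x\rangle$, not above it. The best constant $c$ in $\langle Tx,x\rangle\le c\,m^{1-\lambda}M^{\lambda}$ is exactly $S(h)$, so repairing this route brings back the comparison $S\le K$.

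A clean fix is Kantorovich's inequality $\langle Tx,x\rangle\langle T^{-1}x,x\rangle\le K(h)$ for $mI\le T\le MI$, combined with Proposition~\ref{basic_properties}(2):
\[
\langle B^px,x\rangle\le K(h^p)\,\langle B^{-p}x,x\rangle^{-1}\le K(h^p)\,\Delta_x(B^p).
\]
Your chain then yields (2) directly. You would close with $(3)\Rightarrow(1)$ exactly as you did $(2)\Rightarrow(1)$, using $S(h^p)=1+O(p^2)$, which you already note. If you prefer to keep your cycle, apply the same estimate to $T=\mathrm{diag}(1,h)$ on $\mathbb{C}^2$, with $x$ chosen to give equality in Proposition~\ref{basic_properties}(4). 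This proves $S(h)\le K(h)$.
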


\begin{proof}
If $m=M$, then $B=mI$. In this case, $A\gg B$ is equivalent to $A\ge mI$. Moreover, in each of \textup{(2)}--\textup{(4)} the constant is either $1$ or $0$, so the asserted condition is $A^p\ge m^pI$ for all $p>0$, which is again equivalent to $A\ge mI$ by the spectral theorem. Thus, the theorem is immediate. Hence, assume $m<M$.

$(1)\Rightarrow(2)$ is exactly \cite[Theorem~3 (ii)]{Furuta00}.
Similarly, $(1)\Rightarrow(3)$ is \cite[Theorem~2]{Furuta00}, noting that Furuta's constant $M_h(p)$ coincides with $S(h,p)=S(h^p)$, and $(1)\Rightarrow(4)$ is \cite[Theorem~4 (ii)]{Furuta00}.

For the converses, we use the scalar limit relation
\begin{equation}\label{eq:loglim}
\langle (\log T)x,x\rangle
=\lim_{p\to 0^+}\frac{1}{p}\log\langle T^p x,x\rangle
\qquad(T>0,\ \|x\|=1),
\end{equation}
which follows by taking logarithms in Proposition~\ref{basic_properties} $(5)$.

Assume $(2)$. Fix a unit vector $x$.
Taking inner products in the operator inequality gives
\[
\langle B^p x,x\rangle \le K(h^p)\,\langle A^p x,x\rangle,
\qquad
K(t):=\frac{(t+1)^2}{4t},
\]
so
\[
\frac{1}{p}\log\langle B^p x,x\rangle
\le
\frac{1}{p}\log K(h^p)+\frac{1}{p}\log\langle A^p x,x\rangle.
\]
Since $K(h^p)=1+O(p^2)$ as $p\to 0^+$ (because $K(t)=1+\frac{(t-1)^2}{4t}$ and $h^p-1=O(p)$), we have $\frac{1}{p}\log K(h^p)\to 0$.
Letting $p\to 0^+$ and using \eqref{eq:loglim} yields $\langle (\log B)x,x\rangle\le \langle (\log A)x,x\rangle$.
Since $x$ was arbitrary, $\log A\ge \log B$, i.e.\ $A\gg B$.

The implication $(3)\Rightarrow(1)$ is similar. From $(3)$ we obtain
\[
\frac{1}{p}\log\langle B^p x,x\rangle
\le
\frac{1}{p}\log S(h,p)+\frac{1}{p}\log\langle A^p x,x\rangle.
\]
By the basic property $\lim_{t\to 0}S(h^t)^{1/t}=1$ of the Specht ratio \cite[(iii)]{HS22} (see also \cite[Lemma~2.47(v)]{PFMJS05}), we have $\frac{1}{p}\log S(h,p)\to 0$, so again \eqref{eq:loglim} gives $A\gg B$.

Finally, assume $(4)$. Taking inner products gives
\[
\langle B^p x,x\rangle
\le
\langle A^p x,x\rangle + C_p(m,M).
\]
Since $\log S(h,p)=o(p)$ as $p\to 0^+$ by \cite[(iii)]{HS22} (see also \cite[Lemma~2.47(v)]{PFMJS05}), and $\frac{M^p-m^p}{\log M^p-\log m^p}\to 1$, we have $C_p(m,M)=o(p)$.
Thus
\[
\frac{1}{p}\log\langle B^p x,x\rangle
\le
\frac{1}{p}\log\bigl(\langle A^p x,x\rangle+C_p(m,M)\bigr)
=
\frac{1}{p}\log\langle A^p x,x\rangle
+\frac{1}{p}\log\!\left(1+\frac{C_p(m,M)}{\langle A^p x,x\rangle}\right).
\]
As $p\to 0^+$ we have $\langle A^p x,x\rangle\to 1$ and $C_p(m,M)=o(p)$, hence the last term tends to $0$.
Letting $p\to 0^+$ and using \eqref{eq:loglim} again yields $A\gg B$.
\end{proof}

\begin{remark}
The weak Kantorovich constant $\frac{(M^p+m^p)^2}{4M^pm^p}=K(h^p)$ and the strong constant $S(h,p)=S(h^p)$ are best possible (see \cite[Theorems~2--4]{Furuta00}).
\end{remark}

In applications, it is convenient to interpolate between the strong and the additive inequality.

\begin{proposition}\label{prop:mixed_add_mult}
Let $A,B>0$ and assume that $A\gg B$ and $mI\le B\le MI$ for some $0<m<M$.
Fix $p>0$ and put $h:=M/m$ and $S:=S(h,p)$.
Set
\[
C_{\mathrm{add}}(m,M,p):=\frac{M^p-m^p}{\log M^p-\log m^p}\,\log S.
\]
Then for every $c\in[1,S]$,
\[
cA^p+\frac{S-c}{S-1}\,C_{\mathrm{add}}(m,M,p)\,I\ge B^p.
\]
\end{proposition}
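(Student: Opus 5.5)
The plan is a convex combination of the two endpoint inequalities that Theorem~\ref{KTI} already supplies. Since $A\gg B$ and $mI\le B\le MI$ with $m<M$, the implications $(1)\Rightarrow(3)$ and $(1)\Rightarrow(4)$ of Theorem~\ref{KTI} give, for the fixed $p>0$,
\[
S A^p\ge B^p,\qquad A^p+C_{\mathrm{add}}(m,M,p)\,I\ge B^p,
\]
where $C_{\mathrm{add}}(m,M,p)=C_p(m,M)$. The first is the endpoint $c=S$ of the claim (additive term $0$), and the second is the endpoint $c=1$ (coefficient of $C_{\mathrm{add}}$ equal to $1$).

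First I will check that $S>1$, so that the denominator $S-1$ is nonzero. Since $h>1$, we have $h^p>1$, and Specht's ratio satisfies $S(t)>1$ for $t>1$. This is the strict AM--GM gap, e.g.\ from Proposition~\ref{basic_properties}(4) applied to a diagonal $2\times 2$ example, or directly from the standard fact that $S(t)\ge 1$ with equality only at $t=1$. If one only wants to allow $S\ge1$, the case $S=1$ forces $c=1$ and the $0/0$ coefficient must be interpreted, but for $h>1$ this does not arise.

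Next, for $c\in[1,S]$ I will write $c=\theta S+(1-\theta)\cdot 1$ with $\theta=\frac{c-1}{S-1}\in[0,1]$, so that $1-\theta=\frac{S-c}{S-1}$. Multiplying the strong inequality by $\theta$ and the additive one by $1-\theta$, and adding (positive cones are closed under nonnegative combinations), gives
\[
(\theta S+1-\theta)A^p+(1-\theta)C_{\mathrm{add}}I\ge \theta B^p+(1-\theta)B^p=B^p.
\]
This is exactly $cA^p+\frac{S-c}{S-1}C_{\mathrm{add}}I\ge B^p$.

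There is no real obstacle. The only delicate points are confirming that the constant in Theorem~\ref{KTI}(4) coincides with $C_{\mathrm{add}}$ as defined here, which it does verbatim for $m<M$, and that $S>1$, so the interpolation parameter is well defined.
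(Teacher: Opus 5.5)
Your proposal is correct and is essentially the paper's proof: both take the convex combination, with weight $\theta=(c-1)/(S-1)$, of the strong inequality $B^p\le SA^p$ and the additive inequality $B^p\le A^p+C_{\mathrm{add}}(m,M,p)I$ from Theorem~\ref{KTI}. Your check that $S=S(h^p)>1$ for $h>1$, so that $\theta$ is well defined, makes explicit a point the paper leaves implicit.
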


\begin{proof}
Since $A\gg B$, Theorem~\ref{KTI} $(3)$ and Theorem~\ref{KTI} $(4)$ give
\[
B^p\le SA^p,
\qquad
B^p\le A^p+C_{\mathrm{add}}(m,M,p)\,I.
\]
Let $\theta:=(c-1)/(S-1)\in[0,1]$. Taking the convex combination $(1-\theta)$ times the second inequality plus $\theta$ times the first yields
\[
B^p\le (1-\theta)\bigl(A^p+C_{\mathrm{add}}(m,M,p)\,I\bigr)+\theta(SA^p)
=cA^p+\frac{S-c}{S-1}\,C_{\mathrm{add}}(m,M,p)\,I,
\]
which is the claim.
\end{proof}

The chaotic order is also equivalent to a Furuta-type inequality.

\begin{theorem}\label{FTI}
Let $A,B>0$. Then $A\gg B$ if and only if
\[
A^r\ge \left(A^{r/2}B^pA^{r/2}\right)^{\frac{r}{p+r}}
\]
for all $p,r\geq0$ with $p+r>0$.
\end{theorem}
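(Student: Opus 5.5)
The plan is to derive both directions from two classical results about operator monotonicity: the Löwner--Heinz inequality and Furuta's inequality in its chaotic-order form due to Fujii--Furuta--Kamei. I also rely on the fact that $\log$ is the limit of power means, in the spirit of \eqref{eq:loglim}.

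\emph{($\Leftarrow$)} Assume the displayed inequality holds for all $p,r\ge 0$ with $p+r>0$. Taking $r=0$ only gives $I\ge I$, so I use $p=r>0$ instead. This yields $A^r\ge (A^{r/2}B^rA^{r/2})^{1/2}$.

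Since $t\mapsto t^2$ is not operator monotone, I avoid squaring. Instead I pass to the operator limit as $r\to 0^+$. Write $A^r=I+r\log A+o(r)$ and $B^r=I+r\log B+o(r)$ in norm. Then
\[
A^{r/2}B^rA^{r/2}=I+r(\log A+\log B)+o(r).
\]
Taking the square root, using its norm-differentiability at $I$, gives
\[
(A^{r/2}B^rA^{r/2})^{1/2}=I+\tfrac r2(\log A+\log B)+o(r).
\]
Subtracting this from $A^r$, dividing by $r$ and letting $r\to0^+$ gives $\log A-\tfrac12(\log A+\log B)\ge 0$. That is, $\log A\ge\log B$, so $A\gg B$. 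This direction is routine.

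\emph{($\Rightarrow$)} Assume $\log A\ge \log B$. The key step is the Fujii--Furuta--Kamei theorem (Ando's result when $p=r$): $\log A\ge\log B$ implies
\[
(A^{r/2}B^pA^{r/2})^{\frac{r}{p+r}}\le A^r\quad\text{for all }p,r\ge0 .
\]
The edge cases are immediate. For $p=0$ the right side is $(A^r)^{1}=A^r$, so equality holds. For $r=0$ the right side is $B^0=I$ and the left side is $A^0=I$, so equality holds again.

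If I have to reconstruct the case $p,r>0$ rather than cite it, I would proceed as follows.
\begin{enumerate}
\item For small $\varepsilon>0$ show that $\log A\ge\log B$ implies $(I+\varepsilon\log A)\ge (I+\varepsilon\log B)$. This is trivial, but it does not give $A^\varepsilon\ge B^\varepsilon$ directly.
\item Use instead the standard route: $A\gg B$ if and only if $A^{s}\ge (A^{s/2}B^{s}A^{s/2})^{1/2}$ for all $s>0$ (Ando). Prove it through $\log$ being operator monotone and the representation $\log A=\lim_{s\to0}(A^s-I)/s$.
\item Feed this into Furuta's inequality applied to the pair $A_1:=A^{s}$, $B_1:=(A^{s/2}B^{s}A^{s/2})^{1/2}$, which satisfy $A_1\ge B_1$.
\item Use Furuta's inequality with exponents chosen to reach general $p,r$, and conclude by Löwner--Heinz together with the limiting argument $s\to0$.
\end{enumerate}

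This derivation of the Fujii--Furuta--Kamei inequality is the main obstacle. In practice I would cite it, as the paper already does for \cite{Furuta00}, since Furuta's paper records exactly this equivalence. The self-contained parts are then the endpoint cases and the converse limit argument above.
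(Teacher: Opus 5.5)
Your proof is correct. For the forward implication it matches the paper: the paper proves the whole equivalence by citing Furuta's Theorem~A\,(F2), which is the Fujii--Furuta--Kamei chaotic Furuta inequality you invoke, and your handling of the degenerate cases $p=0$ and $r=0$ is fine. The difference is the converse, which you prove directly instead of citing. You use only the diagonal instances $p=r$, together with the norm expansions $A^{r}=I+r\log A+O(r^2)$ and $B^r=I+r\log B+O(r^2)$ (uniform on the compact spectra) and the analyticity of $X\mapsto(I+X)^{1/2}$ near $X=0$. These give
\[
\frac1r\Bigl(A^r-\bigl(A^{r/2}B^rA^{r/2}\bigr)^{1/2}\Bigr)\longrightarrow\tfrac12(\log A-\log B)
\]
in norm, and closedness of the positive cone yields $\log A\ge\log B$. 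This argument is valid and self-contained. It is an operator-norm version of the scalar limit the paper uses for the converses in Theorem~\ref{KTI}, and it shows that the converse needs only Ando's diagonal case.

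Your optional reconstruction in steps 2--4 would not work as written, however.
\begin{itemize}
\item In step 2, operator monotonicity of $\log$ and the formula $\log A=\lim_{s\to0^+}(A^s-I)/s$ only give implications \emph{into} the chaotic order, exactly like your converse. The forward half of Ando's theorem is the case $p=r$ of the statement itself, so it cannot follow from these two facts.
\item In step 3, Furuta's inequality for $A_1\ge B_1$ produces powers of $B_1$, not of $B$. The natural choice of exponent $2$, where $B_1^2=A^{s/2}B^sA^{s/2}$, recovers only the cases $r\ge p$. Step 4 does not say how to reach $r<p$.
\end{itemize}
The observation you discard in step 1 is in fact the standard route. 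For large $n$, put $A_n:=I+\frac1n\log A$ and $B_n:=I+\frac1n\log B$; then $A_n\ge B_n>0$. Use Furuta's inequality in the form $X^{1+R}\ge(X^{R/2}Y^PX^{R/2})^{(1+R)/(P+R)}$, valid for $X\ge Y>0$, $P\ge1$, $R\ge0$. Taking $P=np$ and $R=nr$ gives
\[
A_n^{1+nr}\ge\bigl(A_n^{nr/2}B_n^{np}A_n^{nr/2}\bigr)^{\frac{1+nr}{n(p+r)}}.
\]
Since $A_n^n\to A$ and $B_n^n\to B$ in norm, letting $n\to\infty$ gives $A^r\ge(A^{r/2}B^pA^{r/2})^{r/(p+r)}$. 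Because you cite the result anyway, this does not affect the validity of your proof. If you include a sketch, though, this is the one to give.
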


\begin{proof}
See~\cite[Theorem~A\,(F2)]{Furuta00}.
\end{proof}

The normalized determinant also interacts in a useful way with certain noncommutative products and means.
We record two representative inequalities (both controlled by Specht-type constants).

Assume in this paragraph that $H$ is separable, and fix an orthonormal basis $(e_j)_{j\geq 1}$ of $H$. Let $U:H\to H\otimes H$ be the isometry $Ue_j=e_j\otimes e_j$.
For $A,B\in\mathcal B(H)$ the Hadamard product (relative to $\{e_j\}$) is defined by
\[
A\circ B:=U^*(A\otimes B)U,
\]
so that in the matrix case $A=(a_{ij})$ and $B=(b_{ij})$ we have $A\circ B=(a_{ij}b_{ij})$.

\begin{theorem}\label{thm:oppenheim}
Let $A,B>0$ satisfy $m_1I\le A\le M_1I$ and $m_2I\le B\le M_2I$ with $0<m_i<M_i$, and put $h_i:=M_i/m_i$.
Then for every unit vector $x\in H$,
\[
\frac{1}{S(h_1)S(h_2)}\,\Delta_x(A\circ I)\,\Delta_x(B\circ I)
\le
\Delta_x(A\circ B)
\le
S(h_1h_2)\,\Delta_x(A\circ I)\,\Delta_x(B\circ I).
\]
\end{theorem}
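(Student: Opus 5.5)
The plan is to work throughout with the isometry $U$, using the identity $U^*(T\otimes I)U=T\circ I$. For any $T\in\mathcal B(H)$, $T\circ I$ is the diagonal operator $\sum_j\langle Te_j,e_j\rangle\, e_j\otimes e_j^*$ with respect to $(e_j)$. Similarly $U^*(I\otimes T)U=T\circ I$. Since $m_1I\le A\le M_1I$, the diagonal entries $\langle Ae_j,e_j\rangle$ lie in $[m_1,M_1]$, so $A\circ I>0$. Moreover $\log(A\circ I)$ is the diagonal operator with entries $\log\langle Ae_j,e_j\rangle$, and likewise for $B$. The spectrum of $A\otimes B$ lies in $[m_1m_2,M_1M_2]$. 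Finally, $\log(A\otimes B)=\log A\otimes I+I\otimes\log B$, because the two summands commute.

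\emph{Upper bound.} Put $y:=Ux$, which is a unit vector in $H\otimes H$.
\begin{enumerate}
\item By Proposition~\ref{basic_properties}~(2),
\[
\Delta_x(A\circ B)\le\langle (A\circ B)x,x\rangle=\langle (A\otimes B)y,y\rangle .
\]
\item The reverse inequality, Proposition~\ref{basic_properties}~(4), applied to $A\otimes B$ with ratio $h_1h_2$, gives
\[
\langle (A\otimes B)y,y\rangle\le S(h_1h_2)\,\Delta_y(A\otimes B).
\]
\item By the tensor splitting of the logarithm,
\[
\log\Delta_y(A\otimes B)=\langle ((\log A)\circ I)x,x\rangle+\langle((\log B)\circ I)x,x\rangle .
\]
\item Both $(\log A)\circ I$ and $\log(A\circ I)$ are diagonal. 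Their $j$-th entries are $\langle(\log A)e_j,e_j\rangle$ and $\log\langle Ae_j,e_j\rangle$, and the first is at most the second by Jensen's inequality (Proposition~\ref{basic_properties}~(2) with $x=e_j$). Hence $(\log A)\circ I\le\log(A\circ I)$, and similarly for $B$.
\item Combining these steps yields
\[
\Delta_x(A\circ B)\le S(h_1h_2)\,\Delta_x(A\circ I)\,\Delta_x(B\circ I).
\]
\end{enumerate}

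\emph{Lower bound.} Consider the unital positive map $\Phi(T):=U^*TU$, and use the operator concavity of $\log$ on $(0,\infty)$. The Davis--Choi Jensen inequality for unital positive maps then gives
\[
\log(A\circ B)=\log\Phi(A\otimes B)\ge\Phi(\log(A\otimes B))=(\log A)\circ I+(\log B)\circ I .
\]
Next, compare diagonal entries once more, now using the reverse inequality Proposition~\ref{basic_properties}~(4) with $x=e_j$. This gives
\[
\langle(\log A)e_j,e_j\rangle\ge\log\langle Ae_j,e_j\rangle-\log S(h_1),
\]
and hence $(\log A)\circ I\ge\log(A\circ I)-\log S(h_1)\,I$; the same holds for $B$ with $S(h_2)$. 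Take the $x$-expectation of the resulting operator inequality and exponentiate. This gives
\[
\Delta_x(A\circ B)\ge \frac{1}{S(h_1)S(h_2)}\,\Delta_x(A\circ I)\,\Delta_x(B\circ I).
\]

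The step I expect to be the main obstacle is justifying the Jensen-type operator inequality $\log\Phi(T)\ge\Phi(\log T)$ in infinite dimensions. This requires that $\log$ is operator concave on $(0,\infty)$, and that Davis's inequality holds for unital positive (indeed completely positive, since $\Phi$ is a compression) maps between $\mathcal B(H\otimes H)$ and $\mathcal B(H)$, with spectra contained in the compact interval $[m_1m_2,M_1M_2]$. I would cite Davis/Choi for this, or prove it directly from the compression form $\Phi(T)=U^*TU$ via the Hansen--Pedersen Jensen operator inequality. The remaining steps are routine consequences of the diagonal structure and of Proposition~\ref{basic_properties}.
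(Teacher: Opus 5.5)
Your argument is correct. The paper gives no proof of its own here; it simply quotes \cite[Theorem~2.2]{HS21}. Your proposal is therefore a self-contained replacement for that citation rather than a variant of an argument in the text.

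Each step checks out.
\begin{itemize}
\item $A\circ B=U^*(A\otimes B)U\ge m_1m_2I$ because $U^*U=I$. This is worth stating explicitly, since both halves need $\log(A\circ B)$ to be bounded.
\item The upper bound uses only scalar spectral-measure facts: Proposition~\ref{basic_properties}~(2) for $A\circ B$, Specht's reverse inequality for $A\otimes B$ at the vector $Ux$ (with condition number $h_1h_2$), and the entrywise Jensen inequality $(\log A)\circ I\le\log(A\circ I)$.
\item The lower bound is where genuinely noncommutative input enters, via $\log(U^*TU)\ge U^*(\log T)U$ for the unital completely positive compression $T\mapsto U^*TU$.
\end{itemize}
That last inequality is the standard Jensen operator inequality (Davis, Choi, Hansen--Pedersen). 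It holds on arbitrary Hilbert spaces because $\log$ is operator concave on $(0,\infty)$, so the obstacle you anticipate is not a real one.

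What your route buys over the bare citation is transparency. It relies only on Proposition~\ref{basic_properties}~(2),(4), the splitting $\log(A\otimes B)=\log A\otimes I+I\otimes\log B$, and one classical operator inequality. Along the way it produces the intermediate operator inequality $\log(A\circ B)\ge(\log A)\circ I+(\log B)\circ I$. It also explains why the constants are asymmetric: $S(h_1)S(h_2)$ comes from applying Specht's ratio to each diagonal separately, while $S(h_1h_2)$ comes from applying it once to $A\otimes B$. The citation is shorter, but it leaves the reader with none of this structure.
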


\begin{proof}
This is \cite[Theorem~2.2]{HS21}.
\end{proof}

\begin{remark}
Specht's ratio is supermultiplicative for $h>1$, i.e. $S(h_1)S(h_2)\le S(h_1h_2)$ \cite[Lemma~2.4]{HS21}.
Combining this with Theorem~\ref{thm:oppenheim} yields the symmetric estimate
\[
\frac{1}{S(h_1h_2)}\,\Delta_x(A\circ I)\,\Delta_x(B\circ I)
\le
\Delta_x(A\circ B)
\le
S(h_1h_2)\,\Delta_x(A\circ I)\,\Delta_x(B\circ I),
\]
see \cite[Remark~2.5]{HS21}.
\end{remark}

For $A,B>0$ and $\alpha\in[0,1]$ the weighted operator geometric mean is
\[
A\sharp_\alpha B:=A^{1/2}(A^{-1/2}BA^{-1/2})^\alpha A^{1/2}.
\]
In general, $\Delta_x$ does not satisfy a determinant identity $\Delta_x(A\sharp_\alpha B)=\Delta_x(A)^{1-\alpha}\Delta_x(B)^\alpha$ unless $A$ and $B$ commute; nevertheless it admits sharp Specht-type bounds.

\begin{theorem}\label{thm:geom_mean_det}
Let $A,B>0$ satisfy $mI\le A,B\le MI$ for some $0<m\le M$.
If $m<M$, set $h:=M/m$. Then, for $0<\alpha<1$ and any unit vector $x\in H$, we have
\[
\frac{K(h^2,\alpha)}{S(h)}\le
\frac{\Delta_x(A\sharp_\alpha B)}{\Delta_x(A)^{1-\alpha}\Delta_x(B)^\alpha}
\le S(h),
\]
where $S(h)$ is Specht's ratio and the generalized Kantorovich constant $K(h,\alpha)$, for $h>1$, is
\[
K(h,\alpha):=
\left(\frac{h^\alpha-h}{(\alpha-1)(h-1)}\right)
\left(\frac{\alpha-1}{\alpha}\frac{h^\alpha-1}{h^\alpha-h}\right)^{\alpha}.
\]
If $m=M$, then $A=B=mI$ and
\[
\frac{\Delta_x(A\sharp_\alpha B)}{\Delta_x(A)^{1-\alpha}\Delta_x(B)^\alpha}=1,
\]
so the endpoint case holds with the limiting conventions $S(1)=1$ and $K(1,\alpha)=1$.
At $\alpha=0$ and $\alpha=1$, the quotient is equal to $1$ in all cases.
\end{theorem}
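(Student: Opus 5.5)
The plan is to pass through the arithmetic quantity $\langle Tx,x\rangle$. We sandwich it between $\Delta_x(T)$ and $S(h)\Delta_x(T)$ using the AM--GM inequality and its Specht reverse from Proposition~\ref{basic_properties}~(2),(4). We then compare vector states of $A\sharp_\alpha B$ with the scalar geometric mean $\langle Ax,x\rangle^{1-\alpha}\langle Bx,x\rangle^\alpha$ via Ando's inequality and its Kantorovich-type reverse. Write $\Phi(T):=\langle Tx,x\rangle$, which is a unital positive linear map $\mathcal B(H)\to\mathbb C$.

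The trivial cases come first. If $m=M$, then $A=B=mI$ and $A\sharp_\alpha B=mI$, so the quotient is $1$ by Proposition~\ref{basic_properties}~(8). If $\alpha\in\{0,1\}$, then $A\sharp_\alpha B\in\{A,B\}$ and the quotient is $1$. So assume $m<M$ and $0<\alpha<1$. We also record that $mI\le A\sharp_\alpha B\le MI$. This holds because $\sharp_\alpha$ is monotone in each argument and $mI\sharp_\alpha mI=mI$, $MI\sharp_\alpha MI=MI$. Hence the reverse inequality of Proposition~\ref{basic_properties}~(4) applies to $A$, to $B$ and to $A\sharp_\alpha B$, each with the same Specht ratio $S(h)$.

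\emph{Upper bound.} By Proposition~\ref{basic_properties}~(2), $\Delta_x(A\sharp_\alpha B)\le\Phi(A\sharp_\alpha B)$. Ando's inequality for positive linear maps, $\Phi(A\sharp_\alpha B)\le\Phi(A)\sharp_\alpha\Phi(B)$, specializes for a scalar-valued $\Phi$ to
\[
\Phi(A\sharp_\alpha B)\le\langle Ax,x\rangle^{1-\alpha}\langle Bx,x\rangle^{\alpha}.
\]
Applying Proposition~\ref{basic_properties}~(4) to $A$ and to $B$ gives $\langle Ax,x\rangle\le S(h)\Delta_x(A)$ and $\langle Bx,x\rangle\le S(h)\Delta_x(B)$. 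Therefore
\[
\Delta_x(A\sharp_\alpha B)\le S(h)^{1-\alpha}S(h)^{\alpha}\Delta_x(A)^{1-\alpha}\Delta_x(B)^\alpha=S(h)\,\Delta_x(A)^{1-\alpha}\Delta_x(B)^\alpha .
\]

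\emph{Lower bound.} Apply Proposition~\ref{basic_properties}~(4) to $A\sharp_\alpha B$ to get $\Delta_x(A\sharp_\alpha B)\ge S(h)^{-1}\Phi(A\sharp_\alpha B)$. Next we need a reverse Ando inequality of the form
\[
\Phi(A\sharp_\alpha B)\ge K(h^2,\alpha)\,\Phi(A)^{1-\alpha}\Phi(B)^\alpha .
\]
Its natural hypothesis is that the spectrum of $A^{-1/2}BA^{-1/2}$ lies in $[m/M,M/m]=[h^{-1},h]$, which follows from $mI\le A,B\le MI$. The condition number of that interval is $h^2$, and this is exactly why $K(h^2,\alpha)$ appears. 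Finally, Proposition~\ref{basic_properties}~(2) gives $\Phi(A)\ge\Delta_x(A)$ and $\Phi(B)\ge\Delta_x(B)$. Chaining these estimates yields the left-hand inequality.

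\emph{Main obstacle.} The main obstacle is the reverse Ando step with the sharp generalized Kantorovich constant. My first attempt would be to reduce to the scalar problem. Set $C:=A^{-1/2}BA^{-1/2}$, so that $A\sharp_\alpha B=A^{1/2}C^\alpha A^{1/2}$, and use $y=A^{1/2}x/\|A^{1/2}x\|$. The statement then becomes $\langle C^\alpha y,y\rangle\ge K(h^2,\alpha)\langle Cy,y\rangle^\alpha$ for $h^{-1}\le C\le h$. This is the classical Kantorovich-type reverse of the Hölder--McCarthy inequality with constant $K(h^2,\alpha)$. It is proved by bounding the concave function $t^\alpha$ below by its chord on $[h^{-1},h]$ and optimizing the resulting ratio; I would either carry out that optimization or cite the standard references (Furuta, Mi\'ci\'c--Pe\v cari\'c--Seo). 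I would double-check that the normalization in this reduction produces exactly $\Phi(A)^{1-\alpha}\Phi(B)^\alpha$, since $\langle Cy,y\rangle=\langle Bx,x\rangle/\langle Ax,x\rangle$.
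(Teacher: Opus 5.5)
Your argument is correct, and it takes a different route from the paper only in the sense that the paper gives no argument of its own. Apart from the trivial case $m=M$, the paper just specializes \cite[Theorem~2.4(ii)]{HS22}, which is stated for a general state $\rho$, to the vector state $\rho=\langle\cdot\,x,x\rangle$. What you give instead is a self-contained derivation from ingredients already in the paper:
\begin{itemize}
\item Proposition~\ref{basic_properties}~(2) and~(4), applied to $A$, to $B$ and to $A\sharp_\alpha B$;
\item a forward and a reverse Ando-type inequality for the vector state.
\end{itemize}
Applying~(4) to $A\sharp_\alpha B$ with ratio $S(h)$ is legitimate, because $mI\le A\sharp_\alpha B\le MI$ by monotonicity of $\sharp_\alpha$.

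Your reduction is sound. With $C=A^{-1/2}BA^{-1/2}$ and $y=A^{1/2}x/\|A^{1/2}x\|$ one has $\langle (A\sharp_\alpha B)x,x\rangle=\langle Ax,x\rangle\langle C^\alpha y,y\rangle$ and $\langle Cy,y\rangle=\langle Bx,x\rangle/\langle Ax,x\rangle$. Moreover $h^{-1}I\le C\le hI$.

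The optimization you defer is routine. Let $at+b$ be the chord of $t^\alpha$ on $[h^{-1},h]$; here $a,b>0$. The ratio $(at+b)t^{-\alpha}$ equals $1$ at both endpoints and has a single critical point, which is a minimum. Its value there is $K(h^{-1},h,\alpha)$. Since the generalized Kantorovich constant depends only on the ratio of the endpoints, this equals $K(h^2,\alpha)$.

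You also do not need Ando's theorem for the upper bound. The same reduction plus the H\"older--McCarthy inequality $\langle C^\alpha y,y\rangle\le\langle Cy,y\rangle^\alpha$ gives it, so both bounds come from one scalar reduction.

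Compared with the bare citation, your route shows where each constant comes from. You get one Specht factor each time the reverse AM--GM inequality is used. The factor $K(h^2,\alpha)$ comes from the spectral interval $[h^{-1},h]$ of $A^{-1/2}BA^{-1/2}$, whose condition number is $h^2$. This is presumably the mechanism behind the cited theorem, though the paper does not reproduce it. The citation, for its part, buys brevity and the statement for arbitrary states rather than only vector states.
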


\begin{proof}
If $m=M$, then $A=B=mI$ and the conclusion is immediate. If $m<M$, this is the case $\rho=\langle\cdot\,x,x\rangle$ of \cite[Theorem~2.4(ii)]{HS22}.
\end{proof}

\begin{remark}
If $A$ and $B$ commute, then $A\sharp_\alpha B=A^{1-\alpha}B^\alpha$ and Proposition~\ref{basic_properties} $(10)$ yields equality in Theorem~\ref{thm:geom_mean_det}.
\end{remark}

It is natural to ask for the relation between the chaotic order $\gg$ and the usual (Loewner) order $\geq$.
Since the scalar function $t\mapsto \log t$ is operator monotone, it follows that $B\geq A$ implies $B\gg A$.

\begin{corollary}\label{equiv}
Let $A,B>0$ and assume that $AB=BA$. Then $B\geq A$ if and only if $B\gg A$.
\end{corollary}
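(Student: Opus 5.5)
The forward implication $B\ge A\Rightarrow B\gg A$ is already available: the sentence preceding the corollary notes that $\log$ is operator monotone, so $B\ge A$ with $A,B>0$ gives $\log B\ge\log A$. Commutativity is not needed here. All the work is in the converse, and there I will use $AB=BA$ to reduce everything to a scalar statement through a joint functional calculus.

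For the converse, assume $\log B\ge\log A$ and $AB=BA$. Because $A$ and $B$ are commuting bounded self-adjoint operators, the C$^*$-algebra $\mathcal C$ generated by $A$, $B$ and $I$ is commutative. Gelfand theory then gives an isometric $*$-isomorphism $\mathcal C\cong C(K)$, where $K$ is the joint spectrum: the compact set of characters $\chi$ of $\mathcal C$, identified with the pairs $(\chi(A),\chi(B))\in\mathbb R^2$.

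Since $A,B>0$, we have $\sigma(A)\subset[m_A,M_A]$ with $m_A>0$, and likewise for $B$. The continuous functional calculus gives $\log A,\log B\in\mathcal C$, and under the Gelfand transform $\chi(\log A)=\log\chi(A)$ and $\chi(\log B)=\log\chi(B)$.

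Positivity in $\mathcal C$ is positivity in $\mathcal B(H)$, because the positive cone of a C$^*$-subalgebra is the restriction of the ambient cone. Therefore $\log B-\log A\ge 0$ means that $\log\chi(B)-\log\chi(A)\ge 0$ for every $\chi\in K$. The scalar logarithm is strictly increasing on $(0,\infty)$, so this gives $\chi(B)\ge\chi(A)$ for all $\chi\in K$. Equivalently, the Gelfand transform of $B-A$ is nonnegative, so $B-A\ge0$.

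The one delicate point is justifying that $\log$ commutes with the Gelfand transform and that operator positivity corresponds to pointwise positivity on $K$; both are standard facts of the continuous functional calculus in commutative C$^*$-algebras. An alternative that avoids Gelfand theory uses the exponential. Write $X=\log A$ and $Y=\log B$; these commute because they are norm limits of polynomials in commuting operators. Then $Y-X\ge0$, and
\[
B-A=e^{Y}-e^{X}=e^{X/2}\bigl(e^{Y-X}-I\bigr)e^{X/2}.
\]
This factorization uses only that $X$ and $Y$ commute. The middle factor satisfies $e^{Y-X}-I\ge0$ by the spectral mapping theorem, since $t\mapsto e^t-1$ is nonnegative on $[0,\infty)\supseteq\sigma(Y-X)$. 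Conjugating by the self-adjoint operator $e^{X/2}$ preserves positivity, so $B-A\ge0$. I would present this second argument, as it is shorter and fully elementary.
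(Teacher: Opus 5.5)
Your proof is correct. Your forward direction is the paper's (operator monotonicity of $\log$). Your first, Gelfand-theoretic sketch of the converse is also essentially the paper's argument. The paper observes that $A,B$ generate a commutative $C^*$-algebra and applies the increasing function $t\mapsto e^t$ to $\log A\le\log B$ there, concluding $B=e^{\log B}\ge e^{\log A}=A$. You compare pointwise on the joint spectrum using monotonicity of $\log$ instead of $\exp$, which is the same idea. Your write-up spells out what the paper leaves implicit: spectral permanence, and the identification of the operator order with the pointwise order on $K$.

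The argument you say you would actually present is a genuinely different route. It uses the congruence factorization
\[
B-A=e^{X/2}\bigl(e^{Y-X}-I\bigr)e^{X/2}=A^{1/2}\bigl(e^{\log B-\log A}-I\bigr)A^{1/2}.
\]
This needs only three facts:
\begin{enumerate}
\item single-operator spectral mapping for the self-adjoint operator $Y-X$;
\item the exponential law $e^{S}e^{T}=e^{S+T}$ for commuting $S,T$;
\item that $T\mapsto CTC$ preserves positivity for self-adjoint $C$.
\end{enumerate}
So it avoids joint spectra and Gelfand representations entirely. It also isolates exactly where commutativity enters, namely the identity $e^{X/2}e^{Y-X}e^{X/2}=e^{Y}$. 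That identity is what fails in Niemiec's $2\times 2$ example. The cost is the extra check that $\log A$ and $\log B$ commute, which you handle correctly via polynomial approximation on $\sigma(A),\sigma(B)\subset(0,\infty)$.

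In short, the paper's version is a shorter, more conceptual one-liner. Yours is more elementary and self-contained, and it makes the role of the commutativity hypothesis transparent.
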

\begin{proof}
If $B\geq A$, then $B\gg A$ by operator monotonicity of the logarithm.

Conversely, assume that $B\gg A$, i.e.\ $\log B\ge \log A$.
Since $A$ and $B$ commute, the continuous functional calculus takes place in a commutative $C^*$-algebra, and the scalar function $t\mapsto e^t$ is increasing.
Therefore
\[
B=e^{\log B}\ge e^{\log A}=A.
\]
\end{proof}

However, the implication $B\gg A\Rightarrow B\ge A$ fails in general, as the following example shows. This example was very kindly provided to us by Piotr Niemiec.

\begin{example}
Let
\[
A=
\begin{pmatrix}
1&0\\0&4
\end{pmatrix}
\qquad\text{and}\qquad
B=
\begin{pmatrix}
5&5\\5&10
\end{pmatrix}.
\]
Then $\log A\le \log B$ but $B-A$ is not positive semidefinite.
\hfill{$\Box$}
\end{example}

\section{Plurisubharmonic functions}

In this section, we use the Fujii--Seo determinant in pluripotential theory on a Hilbert space $H$. Basic properties of plurisubharmonic functions remain valid, but some finite-dimensional tools are missing in infinite dimensions. In particular, there is no standard determinant-type expression attached to the Levi form. We use the Fujii--Seo determinant as a substitute.

We start with the definition of plurisubharmonic functions.

\begin{definition}
Let $\Omega$ be an open set of the Hilbert space $H$. A function $u:\Omega\to [-\infty,+\infty)$ (not identically equal to $-\infty$ on any component of $\Omega$) is said to be {\it plurisubharmonic} if $u$ is upper semicontinuous and for each $z\in \Omega$ and $h\in H$ such that $z+\lambda h\in \Omega$ for $|\lambda|\leq 1$, one has
\[
u(z)\leq \frac {1}{2\pi}\int_0^{2\pi}u(z+e^{i\theta}h)\,d\theta.
\]
By $\mathcal{PSH}(\Omega)$ we denote the family of plurisubharmonic functions on $\Omega$.
\end{definition}

Plurisubharmonicity is a local property. The integral above is well-defined (possibly equal to $-\infty$).

Throughout this section, when we write $G\Subset \Omega$, we mean that $G$ is a bounded open set with $\overline G\subset \Omega$. In particular, no compactness of $\overline G$ is assumed.

We recall some basic properties of plurisubharmonic functions (see~\cite{M}). For the Levi form in the Banach-space setting, compare also Ligocka~\cite{Lig76}.

\bigskip

\begin{theorem}\label{thm_basicprop1}
Assume that $\Omega \subset H$ is an open set. Then:
\begin{enumerate}\itemsep2mm
\item if $u,v\in \mathcal{PSH}(\Omega)$, then $su+tv\in \mathcal{PSH}(\Omega)$ for all constants $s,t\geq 0$;

\item if $u,v\in \mathcal{PSH}(\Omega)$, then $\max\{u,v\}\in \mathcal{PSH}(\Omega)$;

\item if $\{u_{\alpha}\}\subset \mathcal{PSH}(\Omega)$ is locally uniformly bounded above, then the upper semicontinuous regularization
        \[
        \left(\sup_{\alpha}u_{\alpha}\right)^*
        \]
        is plurisubharmonic on $\Omega$;

\item if $\{u_j\}$ is a sequence in $\mathcal{PSH}(\Omega)$ such that $u_j \searrow u$ and $u$ is not identically $-\infty$ on any component of $\Omega$, then $u \in \mathcal{PSH}(\Omega)$;

\item let $u$ be upper semicontinuous. Then $u$ is plurisubharmonic on $\Omega$ if and only if $u|_{\Omega\cap E}$ is plurisubharmonic for every finite-dimensional (complex) subspace $E\subset H$;

\item if $\Omega$ is a domain and $u\in \mathcal{PSH}(\Omega)$ satisfies $u(z)\leq u(a)$ for some $a\in \Omega$ and all $z\in \Omega$, then $u\equiv u(a)$ (maximum principle);

\item if $u\in \mathcal{PSH}(\Omega)$ and $\gamma:\mathbb R\to\mathbb R$ is convex and nondecreasing, then $\gamma\circ u\in\mathcal{PSH}(\Omega)$;

\item if $\omega\Subset\Omega$, $u\in \mathcal{PSH}(\Omega)$, $v\in \mathcal{PSH}(\omega)$, and $\varlimsup_{z\to w}v(z)\leq u(w)$ for all $w\in \partial \omega$, then the function
\[
\varphi=\begin{cases}
u, \, \text{on } \Omega\setminus \omega ,\\
\max\{u,v\}, \, \text{on } \omega,
\end{cases}
\]
is plurisubharmonic on $\Omega$;

\item if $u$ is $C^2$-smooth, then $u\in\mathcal{PSH}(\Omega)$ if and only if the Levi form $D'D''u(z)$ is positive semidefinite, i.e.\ $\langle D'D''u(z)h,h\rangle\geq 0$ for all $z\in \Omega$ and $h\in H$.
\end{enumerate}

Recall that if $Df(a)$ denotes the real differential of $f$ at $a$, then
\[
D'f(a)(h)=\frac 12\left(Df(a)(h)-iDf(a)(ih)\right)
\]
and
\[
D''f(a)(h)=\frac 12\left(Df(a)(h)+iDf(a)(ih)\right).
\]
Moreover,
\[
4D'D''f(a)(s,t)=D^2f(a)(s,t)+D^2f(a)(is,it)+iD^2f(a)(s,it)-iD^2f(a)(is,t).
\]
\end{theorem}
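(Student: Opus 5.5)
The statement collects standard facts, and the plan is to reduce each item either to the definition via complex lines or to the finite-dimensional theory on $\Omega\cap E$ for finite-dimensional subspaces $E$. For this reason item (5) comes first. If $u$ is plurisubharmonic on $\Omega$, then every disc $z+\lambda h$, $|\lambda|\le1$, lying in $\Omega\cap E$ is also a disc in $\Omega$. So the restriction satisfies the sub-mean value inequality and is upper semicontinuous. Conversely, any disc in $\Omega$ lies in $\Omega\cap E$ with $E=\mathrm{span}\{z,h\}$, which gives the other implication. The condition ``not identically $-\infty$ on components'' has to be checked separately, but it is immediate.

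Items (1), (2) and (7) follow directly from the sub-mean value inequality. For (1), integrals are linear and monotone. For (2), $u\le\frac1{2\pi}\int u\le\frac1{2\pi}\int\max\{u,v\}$, and symmetrically for $v$. For (7), use Jensen's inequality for convex $\gamma$ together with monotonicity of $\gamma$; set $\gamma(-\infty)=\lim_{t\to-\infty}\gamma(t)$.

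For (4), the limit $u$ is upper semicontinuous as an infimum of upper semicontinuous functions, and the monotone convergence theorem applies to the circle integrals, since the $u_j$ are bounded above on the compact circle. For (3), I would reduce via (5) to finite-dimensional $E$. There the classical result (Lelong) says that the upper semicontinuous regularization of a locally bounded-above supremum is plurisubharmonic. The point to check is compatibility: the regularization in $H$ restricted to $E$ dominates the regularization in $E$, and the two need not coincide. To avoid this, I would argue directly on a disc. The sup $s$ satisfies the sub-mean inequality $s(z)\le\frac1{2\pi}\int s^*$. Then $s^*(z)=\limsup_{w\to z}s(w)\le\limsup_{w\to z}\frac1{2\pi}\int s^*(w+e^{i\theta}h)\,d\theta$. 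By Fatou's lemma (reverse form, bounded above) and upper semicontinuity of $s^*$, this is $\le\frac1{2\pi}\int s^*(z+e^{i\theta}h)\,d\theta$. This step is the main obstacle, since it needs uniform upper bounds near the compact circle. Those come from local boundedness and compactness of the circle.

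For (6), I would show that $\{u=u(a)\}$ is open and closed. It is closed by upper semicontinuity, since $u\le u(a)$ everywhere. For openness, take a ball $B(a,r)$. For $h$ with $\|h\|<r$, the one-variable maximum principle applied to the subharmonic function $\lambda\mapsto u(a+\lambda h)$ on a disc of radius greater than $1$ gives $u(a+h)=u(a)$. This uses the sub-mean inequality at the centre and upper semicontinuity to conclude equality almost everywhere on circles, hence everywhere. Connectedness of $\Omega$ then finishes the argument.

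For (8), on $\Omega\setminus\overline\omega$ and on $\omega$ the function $\varphi$ is locally $u$ or $\max\{u,v\}$, and these are plurisubharmonic by (2). At $w\in\partial\omega$ we have $\varphi(w)=u(w)\le$ the average of $u$, which is $\le$ the average of $\varphi$ because $\varphi\ge u$. This is the point where sub-mean value inequalities on small discs suffice, since plurisubharmonicity is local. Upper semicontinuity at $w$ follows from the hypothesis $\varlimsup v\le u(w)$.

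For (9), for $C^2$ $u$ and fixed $z,h$, the function $f(\lambda)=u(z+\lambda h)$ is $C^2$ with $\frac{\partial^2 f}{\partial\lambda\partial\bar\lambda}(0)=\langle D'D''u(z)h,h\rangle$. This follows by the chain rule from the displayed formula for $4D'D''f$ with $s=t=h$, using $\frac14\Delta f=\partial\bar\partial f$. A $C^2$ function of one variable is subharmonic if and only if its Laplacian is $\ge0$, and this gives the equivalence.
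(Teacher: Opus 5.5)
The paper gives no proof of this theorem. It simply recalls these facts from Mujica (and Ligocka for the Levi form), whose convention for plurisubharmonic functions admits $u\equiv-\infty$, unlike the paper's. Your proposal is a self-contained direct verification, and its main lines are sound:
\begin{itemize}
\item the slice reduction in (5);
\item the reverse-Fatou argument in (3), which correctly sidesteps the fact that upper semicontinuous regularization does not commute with restriction to $E$;
\item the pointwise local sub-mean inequality in (8), combined with the one-variable fact that a local sub-mean inequality implies subharmonicity;
\item the identity $\Delta_\lambda\, u(z+\lambda h)=4\langle D'D''u(z+\lambda h)h,h\rangle$ in (9), which is exactly the computation the paper uses in the proof of Theorem~\ref{prop:approx_null_maximality}.
\end{itemize}

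The one concrete error is your treatment of the clause ``not identically $-\infty$ on any component,'' which the paper builds into its definition and which you call immediate.
\begin{itemize}
\item In the ``only if'' direction of (5) the clause is not merely non-immediate; it is false. Take unit vectors $e_1\perp e_2$. The function $u(z)=\log|\langle z,e_1\rangle|$ is plurisubharmonic on $H$, since on every complex line it is $\log$ of the modulus of an affine holomorphic function. Yet its restriction to $E=\mathbb{C}e_2$ is identically $-\infty$.
\item So (5) is only true in Mujica's convention, or if you read ``plurisubharmonic on $\Omega\cap E$'' as ``upper semicontinuous and satisfying the sub-mean inequality.'' You should state that reading explicitly rather than claim the clause is automatic.
\item In the ``if'' direction the clause does follow. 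The component of $\Omega\cap E$ through a point $z$ lies inside the component of $\Omega$ through $z$.
\end{itemize}

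The same clause is also silently skipped in (1). For $s,t>0$, the fact that $su+tv\not\equiv-\infty$ on a component $C$ of $\Omega$ does not follow from the sub-mean inequality. Here is one way to fill it:
\begin{itemize}
\item Pick $a,b\in C$ with $u(a)>-\infty$ and $v(b)>-\infty$, and join them by a polygonal path in $C$.
\item Let $E$ be the span of the path's vertices, and $D$ the component of $\Omega\cap E$ containing the path.
\item Then $u|_D$ and $v|_D$ are plurisubharmonic in the finite-dimensional sense and not identically $-\infty$, so they are locally integrable.
\item Hence $\{u=-\infty\}\cap D$ and $\{v=-\infty\}\cap D$ are Lebesgue-null and cannot cover $D$.
\end{itemize}

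In the remaining items the clause really is trivial:
\begin{itemize}
\item in (2), (3) and (8) the new function dominates $u$ or some $u_\alpha$;
\item in (4) it is a hypothesis;
\item in (7) $\gamma$ is real-valued;
\item in (6) and (9) it plays no role.
\end{itemize}
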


\begin{remark}
A function $u\in C^2(\Omega)$ is called \emph{strictly plurisubharmonic} if the Hermitian form $D'D''u(z)$ is strictly positive definite, i.e.\ $\langle D'D''u(z)h,h\rangle>0$ for all $z\in\Omega$ and all $0\neq h\in H$. In infinite-dimensional Hilbert spaces, this only means that $D'D''u(z)$ is injective. It does \emph{not} imply a uniform lower bound $D'D''u(z)\ge mI$ for some $m>0$. Thus maximality rules out boundedly invertible Levi forms, but not pointwise strict positivity in this sense.
\end{remark}

\section{Maximal plurisubharmonic functions}

In this section, we recall the notion of maximal plurisubharmonic functions and relate it to the Fujii--Seo determinant density.

\begin{definition}
A function $u\in \mathcal{PSH}(\Omega)$ is called \emph{maximal} in $\Omega$ if for every bounded open set $G\Subset \Omega$ and every $v\in \mathcal{PSH}(G)$ such that
\[
\varlimsup_{z\to \xi}v(z)\leq u(\xi)\qquad\text{for all }\xi\in \partial G,
\]
one has $v\leq u$ on $G$.
\end{definition}

There is a basic difference between the finite- and infinite-dimensional cases.

In a domain $\Omega\subset\mathbb C^n$:
\begin{enumerate}\itemsep2mm
\item[(i)] a decreasing limit of bounded maximal plurisubharmonic functions is maximal;
\item[(ii)] if $u_j$ are bounded maximal plurisubharmonic functions and the pointwise limit $u:=\lim_j u_j$ is plurisubharmonic, then $u$ is maximal.
\end{enumerate}
In a Hilbert space, the analog of \textup{(i)} remains true, but even this strengthened version of \textup{(ii)} can fail, as the following example shows. Let $H=\ell^2$ and let $\Omega:=B(0,1)\subset H$ be the open unit ball. For $j\ge 1$ define
\[
u_j(z):=\sum_{k=1}^j |z_k|^2,
\qquad z=(z_k)_{k=1}^{\infty}\in \Omega.
\]
Then $0\le u_j\le 1$ on $\Omega$, and each $u_j$ is maximal in $\Omega$ by Corollary~\ref{maximal} (take $x=e_{j+1}$). Moreover,
\[
u_j(z)\nearrow u(z):=\|z\|^2 \qquad \text{for every } z\in \Omega,
\]
since
\[
u(z)-u_j(z)=\sum_{k=j+1}^{\infty}|z_k|^2 \longrightarrow 0 \qquad \text{as } j\to\infty.
\]
The limit $u$ is not maximal in $\Omega$: if $0<r<1$ and $G:=B(0,r)\Subset\Omega$, then $u\equiv r^2$ on $\partial G$, while the constant function $v\equiv r^2$ is plurisubharmonic on $G$ and satisfies $v>u$ in $G$.
Thus, maximality is not stable under increasing limits in infinite dimensions.

\begin{definition}
Let $\Omega\subset H$ be open and let $u\in \mathcal{PSH}(\Omega)\cap C^2(\Omega)$. We define the \emph{Fujii--Seo determinant density} of $u$ by
\[
\operatorname{FSD}(u)(a):=\inf_{\|x\|=1}\Delta_x\!\bigl(D'D''u(a)\bigr),
\qquad a\in \Omega.
\]
\end{definition}

We ask whether $u$ is maximal in $\Omega$ if and only if $\operatorname{FSD}(u)(a)=0$ for all $a\in\Omega$.

The next theorem gives the implication ``maximal $\Rightarrow \operatorname{FSD}\equiv 0$.''

\begin{theorem}\label{maximality_implies_zero}
Let $\Omega\subset H$ be open and let $u\in \mathcal{PSH}(\Omega)\cap C^2(\Omega)$. If $u$ is maximal in $\Omega$, then
\[
\operatorname{FSD}(u)(a)=0 \qquad\text{for every }a\in \Omega.
\]
\end{theorem}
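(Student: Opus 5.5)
We argue by contradiction. By Proposition~\ref{prop:extrema} (equivalently Proposition~\ref{bp2}), $\operatorname{FSD}(u)(a)=\inf\sigma(D'D''u(a))$. So if $\operatorname{FSD}(u)(a)>0$ at some point $a$, then $D'D''u(a)\ge 2mI$ for some $m>0$. The plan is to use this uniform lower bound to construct a competitor $v$ on a small ball around $a$ that violates maximality.

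First, the strict positivity has to be propagated to a neighbourhood. Since $u\in C^2(\Omega)$, the second real differential $z\mapsto D^2u(z)$ is continuous into the bounded bilinear forms. By the formula for $4D'D''u$ in Theorem~\ref{thm_basicprop1}, the map $z\mapsto D'D''u(z)$ is then norm continuous. Hence there is $r>0$ with $\overline{B(a,r)}\subset\Omega$ and $D'D''u(z)\ge mI$ for all $z\in B(a,r)$.

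Next, set $v(z):=u(z)-m\|z-a\|^2+mr^2$ on $G:=B(a,r)$, which is a bounded open set with $\overline G\subset\Omega$. Since $\|z-a\|^2$ has Levi form $I$, we get $D'D''v(z)=D'D''u(z)-mI\ge 0$. By Theorem~\ref{thm_basicprop1}(9), $v\in\PSH{G}\cap C^2(G)$. On $\partial G$ we have $\|z-a\|=r$, so by continuity of $u$ on $\overline G$ we get $\varlimsup_{z\to\xi}v(z)=u(\xi)$. Maximality of $u$ therefore forces $v\le u$ on $G$. But $v(a)=u(a)+mr^2>u(a)$, which is a contradiction. Hence $\operatorname{FSD}(u)(a)=0$ for every $a$.

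The main point requiring care is the first step: getting a neighbourhood on which the Levi form stays uniformly bounded below. In infinite dimensions this needs norm continuity of $z\mapsto D'D''u(z)$, which we take to be the meaning of $C^2$ (Fréchet). Pointwise injectivity alone would not suffice, and this is exactly why the spectral endpoint characterization from Proposition~\ref{prop:extrema} is used rather than strict plurisubharmonicity.
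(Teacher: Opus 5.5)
Your proof is correct and is essentially the paper's argument. You identify $\operatorname{FSD}(u)(a)$ with $\inf\sigma(D'D''u(a))$ via Proposition~\ref{prop:extrema}, propagate the lower bound to a ball by continuity of the Levi form, and subtract a multiple of $\|z-a\|^2$ to get a plurisubharmonic competitor that agrees with $u$ on $\partial B(a,r)$ but exceeds it at $a$. The only differences are cosmetic: the paper subtracts $\frac{\delta}{2}\|z-a\|^2$ with some spare margin where you subtract exactly $m\|z-a\|^2$, and you spell out why the $C^2$ hypothesis gives norm continuity of $z\mapsto D'D''u(z)$, which the paper only asserts.
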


\begin{proof}
Assume that $\operatorname{FSD}(u)(a)>0$ at some point $a\in \Omega$. Pick $\delta>0$ such that $\operatorname{FSD}(u)(a)>2\delta$. By Proposition~\ref{prop:extrema} applied to the positive operator $D'D''u(a)$, we have
\[
\operatorname{FSD}(u)(a)=\inf\sigma(D'D''u(a)).
\]
Hence $D'D''u(a)\ge 2\delta I$. By continuity of $D'D''u$ there exists $r>0$ with $\overline{B(a,r)}\subset\Omega$ and
\[
D'D''u(z)\ge \delta I\qquad\text{for all }z\in B(a,r).
\]
Define
\[
v(z):=u(z)-\frac\delta2\|z-a\|^2+\frac\delta2 r^2,
\qquad z\in B(a,r).
\]
Since $D'D''\|z-a\|^2=I$, we have
\[
D'D''v(z)=D'D''u(z)-\frac\delta2 I\ge \frac\delta2 I\ge 0,
\]
so $v\in \mathcal{PSH}(B(a,r))$. On the boundary $\partial B(a,r)$ we have $\|z-a\|=r$ and hence $v(z)=u(z)$. However,
\[
v(a)=u(a)+\frac\delta2 r^2>u(a),
\]
contradicting the maximality of $u$. Therefore $\operatorname{FSD}(u)(a)=0$ for all $a\in\Omega$.
\end{proof}

We do not know whether the converse implication, namely
\[
\operatorname{FSD}(u)\equiv 0 \quad\Longrightarrow\quad u\text{ is maximal},
\]
holds in general. However, the converse does hold for constant Levi forms (Corollary~\ref{cor:constant_levi}) and, more generally, under the asymptotic null-direction hypothesis of Theorem~\ref{prop:approx_null_maximality} below.

It is useful to compare several pointwise degeneracy conditions. Consider:
\begin{itemize}\itemsep2mm
\item[$(1)$] for every $a\in \Omega$ there exists a unit vector $x_a$ such that $\Delta_{x_a}(D'D''u(a))=0$;
\item[$(2)$] there exists a unit vector $x$ such that $\Delta_x(D'D''u(a))=0$ for all $a\in\Omega$;
\item[$(3)$] for every $a\in \Omega$ there exists a unit vector $x_a$ such that $\langle D'D''u(a)x_a,x_a\rangle=0$;
\item[$(4)$] there exists a unit vector $x$ such that $\langle D'D''u(a)x,x\rangle=0$ for all $a\in\Omega$.
\end{itemize}
Conditions $(1)$--$(2)$ do not imply $(3)$--$(4)$ in general: Examples~\ref{e111} and~\ref{eL2} below show that $(2)$ may hold even when $D'D''u(a)$ is injective for every $a$.

The next theorem provides a general condition, which is useful when checking the maximality of a plurisubharmonic function.

\begin{theorem}\label{prop:approx_null_maximality}
Let $\Omega\subset H$ be open and let $u\in \mathcal{PSH}(\Omega)\cap C^2(\Omega)$. Assume that for every bounded open set $G\Subset \Omega$ there exists a sequence $(x_n)$ of unit vectors in $H$ such that
\[
\sup_{z\in G}\langle D'D''u(z)x_n,x_n\rangle \longrightarrow 0
\qquad\text{as }n\to\infty.
\]
Then $u$ is maximal in $\Omega$.
\end{theorem}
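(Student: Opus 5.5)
The plan is to argue by contradiction and reduce everything to a one-variable maximum principle on a single complex line, chosen along an approximate null direction $x_n$. Fix a bounded open set $G\Subset\Omega$ and $v\in\mathcal{PSH}(G)$ with $\varlimsup_{z\to\xi}v(z)\le u(\xi)$ for all $\xi\in\partial G$. Fix a point $z_0\in G$. It suffices to show $v(z_0)\le u(z_0)+\varepsilon$ for every $\varepsilon>0$.

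\textbf{Step 1: choose the slice.} Put $R:=\operatorname{diam}G<\infty$ and $\varepsilon_n:=\sup_{z\in G}\langle D'D''u(z)x_n,x_n\rangle$, so that $\varepsilon_n\to0$ by hypothesis. Let $D_n\subset\C$ be the connected component containing $0$ of the open set $\{\lambda\in\C:\ z_0+\lambda x_n\in G\}$. Since $\|x_n\|=1$, we have $|\lambda|\le R$ on $D_n$, so $D_n$ is a bounded domain. If $\lambda_0\in\partial D_n$, then $z_0+\lambda_0x_n\in\overline G$. It is not in $G$, for otherwise openness would put $\lambda_0$ in $D_n$. Hence $z_0+\lambda_0x_n\in\partial G$.

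\textbf{Step 2: build a subharmonic comparison function.} Define
\[
w_n(\lambda):=v(z_0+\lambda x_n)-u(z_0+\lambda x_n)+c\,\varepsilon_n|\lambda|^2,\qquad \lambda\in D_n .
\]
\begin{itemize}
\item The function $\lambda\mapsto v(z_0+\lambda x_n)$ is subharmonic or $\equiv-\infty$, by the definition of plurisubharmonicity restricted to complex lines.
\item The function $g(\lambda):=u(z_0+\lambda x_n)$ is $C^2$. A direct computation with the formula for $4D'D''f$ in Theorem~\ref{thm_basicprop1} gives $\Delta g(\lambda)=c\,\langle D'D''u(z_0+\lambda x_n)x_n,x_n\rangle\le c\,\varepsilon_n$, where $c>0$ is the fixed normalising constant relating $\Delta$ to the Levi form ($c=4$ with the usual convention).
\item Since $\Delta\bigl(c\,\varepsilon_n|\lambda|^2\bigr)=4c\,\varepsilon_n\ge c\,\varepsilon_n$, the $C^2$ part $-g+c\,\varepsilon_n|\lambda|^2$ has nonnegative Laplacian.
\end{itemize}
Consequently $w_n$ is upper semicontinuous and subharmonic on $D_n$, or identically $-\infty$, in which case there is nothing to prove.

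\textbf{Step 3: boundary bound and maximum principle.} At $\lambda_0\in\partial D_n$, put $\xi:=z_0+\lambda_0x_n\in\partial G$. Then
\[
\varlimsup_{\lambda\to\lambda_0}v(z_0+\lambda x_n)\le\varlimsup_{z\to\xi}v(z)\le u(\xi).
\]
Since $u$ is continuous on $\Omega\supset\overline G$, it follows that $\varlimsup_{\lambda\to\lambda_0}w_n(\lambda)\le c\,\varepsilon_nR^2$. The classical maximum principle for subharmonic functions on the bounded domain $D_n$ then yields $w_n(0)\le c\,\varepsilon_nR^2$, that is,
\[
v(z_0)-u(z_0)\le c\,\varepsilon_nR^2 .
\]
Letting $n\to\infty$ gives $v(z_0)\le u(z_0)$. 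As $z_0\in G$ was arbitrary, $u$ is maximal.

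\textbf{Expected main difficulty.} The only delicate points are the bookkeeping in Steps 1 and 3. One must check that boundary points of the slice domain $D_n$ land in $\partial G$; this uses that $G$ is open and that $D_n$ is a component of its preimage. One must also check that $D_n$ is bounded, which is where the uniform bound $\sup_{z\in G}$ and the boundedness of $G$ enter; no compactness of $\overline G$ is needed. The Laplacian identity for the slice is routine but should be verified with the paper's normalisation of $D'D''$. Only positivity of the constant $c$ matters for the argument.
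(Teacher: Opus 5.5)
Your proof is correct and follows the paper's argument: slice along the approximate null direction $x_n$, add a multiple of $\varepsilon_n|\lambda|^2$ so that $v-u$ becomes subharmonic on the slice component $D_n$, check that $\partial D_n$ maps into $\partial G$, and apply the planar maximum principle to get $v(z_0)-u(z_0)\le c\,\varepsilon_nR^2\to 0$. The differences are cosmetic: you argue directly rather than by contradiction, you take $R=\operatorname{diam}G$, and your coefficient $4\varepsilon_n$ is larger than needed, since $\varepsilon_n|\lambda|^2$ already suffices because $\Delta|\lambda|^2=4$.
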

\begin{proof}
Assume to the contrary that $u$ is not maximal. Then there exist a bounded open set $G\Subset \Omega$, a function $v\in \mathcal{PSH}(G)$, and a point $z_0\in G$ such that
\[
\varlimsup_{z\to \xi}v(z)\leq u(\xi)
\qquad\text{for all }\xi\in \partial G,
\]
but
\[
v(z_0)>u(z_0).
\]
Set
\[
R:=\sup_{z\in \overline G}\|z-z_0\|<\infty.
\]
By assumption, there is a sequence of unit vectors $(x_n)$ such that
\[
\lambda_n:=\sup_{z\in G}\langle D'D''u(z)x_n,x_n\rangle \longrightarrow 0.
\]
For each $n$, let $U_n$ be the connected component containing $0$ of the open set
\[
\{t\in \mathbb C:\ z_0+t x_n\in G\}.
\]
If $t\in U_n$, then $z_0+t x_n\in G$, hence
\[
|t|=\|t x_n\|=\|z_0+t x_n-z_0\|\leq R.
\]
Define
\[
w_n(t):=
v(z_0+t x_n)-u(z_0+t x_n)+\lambda_n\bigl(|t|^2-R^2\bigr),
\qquad t\in U_n.
\]
Since $v$ is plurisubharmonic, the function $t\mapsto v(z_0+t x_n)$ is subharmonic on $U_n$. Set
\[
u_n(t):=u(z_0+t x_n), \qquad t\in U_n.
\]
Since $u\in C^2(\Omega)$, writing $t=s+ir$ and using the formula for the Levi form gives
\[
\Delta_t u_n(t)
=
\frac{\partial^2 u_n}{\partial s^2}(t)
+
\frac{\partial^2 u_n}{\partial r^2}(t)
=
4\langle D'D''u(z_0+t x_n)x_n,x_n\rangle
\leq 4\lambda_n,
\]
where $\Delta_t=\partial_s^2+\partial_r^2$. Hence
\[
\Delta_t\bigl(-u_n(t)+\lambda_n |t|^2\bigr)
=
-\Delta_t u_n(t)+4\lambda_n
\geq 0.
\]
Thus the function
\[
t\longmapsto -u(z_0+t x_n)+\lambda_n|t|^2
\]
is subharmonic on $U_n$. Therefore, $w_n$ is subharmonic on $U_n$.

Now let $\tau\in \partial U_n$. We first show that $z_0+\tau x_n\in \partial G$.
Indeed, choose $t_k\in U_n$ with $t_k\to\tau$. Then $z_0+t_kx_n\in G$, hence $z_0+\tau x_n\in \overline G$.
If $z_0+\tau x_n\in G$, then $\tau$ belongs to the open slice $\{t\in\mathbb C:\ z_0+t x_n\in G\}$.
Since this slice is open and $\tau\in\partial U_n$, a small disc around $\tau$ contained in the slice intersects $U_n$; the union of this disc with $U_n$ is then a connected subset of the slice containing $0$. By maximality of the connected component $U_n$, this would force $\tau\in U_n$, a contradiction.
Thus $z_0+\tau x_n\in \partial G$. Since also $|\tau|\leq R$, we obtain
\[
\varlimsup_{t\to \tau} w_n(t)\leq 0.
\]
By the maximum principle for subharmonic functions on planar domains,
\[
w_n(0)\leq 0.
\]
Thus
\[
v(z_0)-u(z_0)-\lambda_n R^2\leq 0,
\]
that is,
\[
v(z_0)-u(z_0)\leq \lambda_n R^2.
\]
Letting $n\to\infty$ gives $v(z_0)\leq u(z_0)$, a contradiction.
\end{proof}

The next corollary shows that the actual null-direction condition $(4)$ implies maximality.

Corollary~\ref{maximal} is the special case of Theorem~\ref{prop:approx_null_maximality} in which one takes $x_n\equiv x$.

\begin{corollary}\label{maximal}
Let $\Omega\subset H$ be open and let $u\in \mathcal{PSH}(\Omega)\cap C^2(\Omega)$. Assume that there exists a unit vector $x\in H$ such that
\[
\langle D'D''u(a)x,x\rangle =0\qquad\text{for all }a\in \Omega.
\]
Then $u$ is maximal in $\Omega$.
\end{corollary}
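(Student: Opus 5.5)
The plan is to obtain Corollary~\ref{maximal} as a direct specialization of Theorem~\ref{prop:approx_null_maximality}, whose hypothesis we verify with a constant sequence. Given the unit vector $x$ with $\langle D'D''u(a)x,x\rangle=0$ for all $a\in\Omega$, fix an arbitrary bounded open set $G\Subset\Omega$ and put $x_n:=x$ for every $n$. Since $G\subset\Omega$, we get
\[
\sup_{z\in G}\langle D'D''u(z)x_n,x_n\rangle=\sup_{z\in G}\langle D'D''u(z)x,x\rangle=0
\]
for all $n$. The required convergence to $0$ therefore holds trivially, and the theorem yields maximality of $u$ in $\Omega$.

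If one prefers a self-contained argument, the same slice construction works with $\lambda_n=0$. Suppose $v\in\mathcal{PSH}(G)$ satisfies the boundary inequality and $v(z_0)>u(z_0)$. Let $U$ be the component containing $0$ of $\{t\in\C: z_0+tx\in G\}$.

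On $U$, the function $t\mapsto u(z_0+tx)$ has Laplacian $4\langle D'D''u(z_0+tx)x,x\rangle=0$, so it is harmonic. Hence $t\mapsto v(z_0+tx)-u(z_0+tx)$ is subharmonic on $U$. Its $\varlimsup$ at each $\tau\in\partial U$ is at most $0$, because $z_0+\tau x\in\partial G$ by the component argument from the proof of the theorem. Moreover, $U$ is bounded since $G$ is bounded.

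The maximum principle on the planar domain $U$ then gives $v(z_0)\le u(z_0)$, a contradiction. The only point needing care is that boundary points of the slice map into $\partial G$, and this is already handled in the theorem's proof. I therefore expect no real obstacle: invoking Theorem~\ref{prop:approx_null_maximality} with $x_n\equiv x$ suffices.
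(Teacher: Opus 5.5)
Your proposal is correct and is exactly the paper's route: the paper obtains Corollary~\ref{maximal} as the special case $x_n\equiv x$ of Theorem~\ref{prop:approx_null_maximality}. Your optional self-contained argument is simply that theorem's slice proof with $\lambda_n=0$, so the harmonicity of $t\mapsto u(z_0+tx)$ replaces the $\lambda_n|t|^2$ correction term.
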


\begin{corollary}\label{cor:constant_levi}
Let $\Omega\subset H$ be open and let $u\in \mathcal{PSH}(\Omega)\cap C^2(\Omega)$. Assume that there exists a bounded positive operator $A\in \mathcal B(H)$ such that
\[
D'D''u(a)=A
\qquad\text{for all }a\in \Omega.
\]
Then the following are equivalent:
\begin{enumerate}\itemsep2mm
\item $u$ is maximal in $\Omega$;
\item $\operatorname{FSD}(u)\equiv 0$ on $\Omega$;
\item $\inf\sigma(A)=0$.
\end{enumerate}
\end{corollary}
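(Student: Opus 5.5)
The plan is to prove the cycle of implications $(1)\Rightarrow(2)\Rightarrow(3)\Rightarrow(1)$, using the results already in place. The implication $(1)\Rightarrow(2)$ is exactly Theorem~\ref{maximality_implies_zero}; nothing about constancy of the Levi form is needed there.

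For $(2)\Rightarrow(3)$, I would fix any point $a\in\Omega$. Since $D'D''u(a)=A$, Proposition~\ref{prop:extrema} gives
\[
0=\operatorname{FSD}(u)(a)=\inf_{\|x\|=1}\Delta_x(A)=\inf\sigma(A).
\]
(Alternatively, one may use the equivalence $(1)\Leftrightarrow(5)$ of Proposition~\ref{bp2}.) Since $\Omega$ is nonempty, this gives $(3)$.

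For $(3)\Rightarrow(1)$, I would verify the hypothesis of Theorem~\ref{prop:approx_null_maximality}. By Proposition~\ref{bp2}, $(5)\Rightarrow(3)$, the condition $\inf\sigma(A)=0$ yields unit vectors $x_n$ with $\langle Ax_n,x_n\rangle\to0$. Because the Levi form is constant, for every bounded open $G\Subset\Omega$ we have
\[
\sup_{z\in G}\langle D'D''u(z)x_n,x_n\rangle=\langle Ax_n,x_n\rangle\to0,
\]
and this holds with the same sequence $(x_n)$ for every $G$. Theorem~\ref{prop:approx_null_maximality} then gives maximality of $u$.

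There is no real obstacle. The only point needing care is that the uniformity in $z$ required by Theorem~\ref{prop:approx_null_maximality}, which in general is the crux, is automatic here because $D'D''u(z)$ does not depend on $z$. The pointwise degeneracy condition therefore upgrades for free to the asymptotic null-direction condition.
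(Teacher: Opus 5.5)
Your proposal is correct and follows the paper's proof essentially step for step: $(1)\Rightarrow(2)$ by Theorem~\ref{maximality_implies_zero}, $(2)\Rightarrow(3)$ via Proposition~\ref{prop:extrema} applied to the constant Levi form, and $(3)\Rightarrow(1)$ by taking an approximate null sequence from Proposition~\ref{bp2} and feeding it into Theorem~\ref{prop:approx_null_maximality}. You also correctly identify that constancy of the Levi form is what makes the uniformity in $z$ required there automatic.
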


\begin{proof}
$(1)\Rightarrow(2)$ is Theorem~\ref{maximality_implies_zero}.

Since $D'D''u(a)=A$ for every $a\in \Omega$, Proposition~\ref{prop:extrema} gives
\[
\operatorname{FSD}(u)(a)
=
\inf_{\|x\|=1}\Delta_x(A)
=
\inf\sigma(A)
\qquad\text{for all }a\in \Omega.
\]
Hence $(2)\Leftrightarrow(3)$.

Assume now that $\inf\sigma(A)=0$. By Proposition~\ref{bp2}, there exists a sequence $(x_n)$ of unit vectors such that
\[
\langle Ax_n,x_n\rangle\to 0.
\]
Therefore, for every bounded open set $G\Subset \Omega$,
\[
\sup_{z\in G}\langle D'D''u(z)x_n,x_n\rangle
=
\langle Ax_n,x_n\rangle
\longrightarrow 0.
\]
Theorem~\ref{prop:approx_null_maximality} now implies that $u$ is maximal.
\end{proof}

\begin{proposition}\label{prop:common-range}
Let $\Omega$ be a domain in $H$, and let $u\in \mathcal{PSH}(\Omega)\cap C^2(\Omega)$. Assume that there exists a proper closed subspace $E\subsetneq H$ such that
\[
\mathrm{Ran}\bigl(D'D''u(a)\bigr)\subset E,
\qquad a\in\Omega.
\]
Then $u$ is maximal in $\Omega$.
\end{proposition}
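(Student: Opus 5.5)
Since $E$ is a proper closed subspace, we have $E^\perp\neq\{0\}$, so we can fix a unit vector $x\in E^\perp$. The plan is to show that $x$ is a null direction for every Levi form $D'D''u(a)$, $a\in\Omega$, and then apply Corollary~\ref{maximal}.

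The key step is as follows. For each $a\in\Omega$ the operator $A_a:=D'D''u(a)$ is bounded and positive, hence self-adjoint. By hypothesis $A_ax\in\mathrm{Ran}(A_a)\subset E$, while $x\in E^\perp$. Therefore
\[
\langle D'D''u(a)x,x\rangle=\langle A_ax,x\rangle=0 \qquad\text{for all } a\in\Omega.
\]
We could equally note that $\ker A_a=\mathrm{Ran}(A_a)^\perp\supset E^\perp$, so $A_ax=0$. Either way, the hypothesis of Corollary~\ref{maximal} holds with this fixed $x$, and the corollary gives maximality of $u$ in $\Omega$. In turn, Corollary~\ref{maximal} is the constant-sequence case $x_n\equiv x$ of Theorem~\ref{prop:approx_null_maximality}, where $\sup_{z\in G}\langle D'D''u(z)x,x\rangle=0$ for every $G\Subset\Omega$.

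There is no genuine obstacle here. The only point needing care is to use the Hermitian (self-adjoint) nature of the Levi form as a bounded operator on $H$. This is what lets the range condition produce a null direction through orthogonality, i.e.\ $\langle A_ax,x\rangle=0$ directly, or via $\ker A_a=(\mathrm{Ran}A_a)^\perp$. Properness of $E$ is used exactly once, to guarantee that a unit vector in $E^\perp$ exists. The assumption that $\Omega$ is a domain is not needed beyond what Corollary~\ref{maximal} already requires.
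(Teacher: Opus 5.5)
Your proof is correct and is the same as the paper's: choose a unit vector $x\in E^\perp$, note that $\langle D'D''u(a)x,x\rangle=0$ because $D'D''u(a)x\in E$, and apply Corollary~\ref{maximal}. One small correction: this vanishing needs only $A_ax\in E$ and $x\in E^\perp$, not self-adjointness, which enters only in your alternative route via $\ker A_a=(\mathrm{Ran}\,A_a)^\perp$.
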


\begin{proof}
Choose a unit vector $x\in E^\perp$. For every $a\in\Omega$ we have $D'D''u(a)x\in E$, hence
\[
\langle D'D''u(a)x,x\rangle=0,
\qquad a\in\Omega.
\]
The conclusion follows from Corollary~\ref{maximal}.
\end{proof}

\begin{proposition}\label{prop:approx_common_range}
Let $\Omega\subset H$ be open, and let $u\in \mathcal{PSH}(\Omega)\cap C^2(\Omega)$. Assume that for every bounded open set $G\Subset \Omega$ and every $\varepsilon>0$ there exists a proper closed subspace
\[
E_{\varepsilon,G}\subsetneq H
\]
such that
\[
\|P_{E_{\varepsilon,G}^\perp}D'D''u(z)\|\le \varepsilon
\qquad (z\in G).
\]
Then $u$ is maximal in $\Omega$.
\end{proposition}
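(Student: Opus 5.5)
We reduce the statement to Theorem~\ref{prop:approx_null_maximality}. Fix a bounded open set $G\Subset\Omega$. For each $n\in\N$ apply the hypothesis with $\varepsilon=1/n$. This gives a proper closed subspace $E_n:=E_{1/n,G}\subsetneq H$ with $\|P_{E_n^\perp}D'D''u(z)\|\le 1/n$ for all $z\in G$. Since $E_n$ is proper and closed, $E_n^\perp\neq\{0\}$, so we can choose a unit vector $x_n\in E_n^\perp$. Then $P_{E_n^\perp}x_n=x_n$.

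The key step is a one-line estimate. Because $P_{E_n^\perp}$ is an orthogonal projection, hence self-adjoint, we have for every $z\in G$
\[
\langle D'D''u(z)x_n,x_n\rangle=\langle D'D''u(z)x_n,P_{E_n^\perp}x_n\rangle=\langle P_{E_n^\perp}D'D''u(z)x_n,x_n\rangle\le \|P_{E_n^\perp}D'D''u(z)\|\le \frac1n .
\]
The left-hand side is also nonnegative, since $u$ is plurisubharmonic and so $D'D''u(z)\ge 0$ by Theorem~\ref{thm_basicprop1}(9). Hence
\[
0\le\sup_{z\in G}\langle D'D''u(z)x_n,x_n\rangle\le \frac1n\longrightarrow 0 .
\]

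As $G$ was an arbitrary bounded open set with $\overline G\subset\Omega$, the hypothesis of Theorem~\ref{prop:approx_null_maximality} holds, and so $u$ is maximal in $\Omega$.

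There is no real obstacle here. The only points to check are that properness of $E_{\varepsilon,G}$ supplies a nonzero vector in its orthogonal complement (this is where closedness is used), and that the sequence $(x_n)$ may depend on $G$, which Theorem~\ref{prop:approx_null_maximality} allows.
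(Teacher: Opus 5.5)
Your proposal is correct and follows the paper's proof essentially verbatim: fix $G$, take $E_n=E_{1/n,G}$, pick a unit vector $x_n\in E_n^\perp$, bound $\langle D'D''u(z)x_n,x_n\rangle=\langle P_{E_n^\perp}D'D''u(z)x_n,x_n\rangle\le 1/n$ uniformly on $G$, and invoke Theorem~\ref{prop:approx_null_maximality}. Your remarks on where closedness of $E_{\varepsilon,G}$ is used and on the dependence of $(x_n)$ on $G$ are accurate details that the paper leaves implicit.
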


\begin{proof}
Fix a bounded open set $G\Subset \Omega$. For each $n\in\N$, choose a proper closed subspace
\[
E_n:=E_{1/n,G}\subsetneq H
\]
such that
\[
\|P_{E_n^\perp}D'D''u(z)\|\le \frac1n
\qquad (z\in G).
\]
Choose a unit vector
\[
x_n\in E_n^\perp.
\]
Then for every $z\in G$,
\[
\langle D'D''u(z)x_n,x_n\rangle
=
\langle P_{E_n^\perp}D'D''u(z)x_n,x_n\rangle
\le
\|P_{E_n^\perp}D'D''u(z)\|
\le
\frac1n.
\]
Hence
\[
\sup_{z\in G}\langle D'D''u(z)x_n,x_n\rangle\longrightarrow 0.
\]
By Theorem~\ref{prop:approx_null_maximality}, $u$ is maximal in $\Omega$.
\end{proof}

\begin{corollary}\label{cor:collectively_compact}
Assume that $H$ is infinite-dimensional. Let $\Omega\subset H$ be open, and let
\[
u\in \mathcal{PSH}(\Omega)\cap C^2(\Omega).
\]
Assume that for every bounded open set $G\Subset \Omega$, the family
\[
\{D'D''u(z):\ z\in G\}
\]
is collectively compact, that is,
\[
\{D'D''u(z)h:\ z\in G,\ \|h\|\le 1\}
\]
is relatively compact in $H$. Then $u$ is maximal in $\Omega$.
\end{corollary}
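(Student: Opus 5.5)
The plan is to reduce the statement to Proposition~\ref{prop:approx_common_range}. Fix a bounded open set $G\Subset\Omega$ and $\varepsilon>0$, and set
\[
K:=\overline{\{D'D''u(z)h:\ z\in G,\ \|h\|\le 1\}},
\]
which is compact in $H$ by hypothesis. We must produce a proper closed subspace $E$ with $\|P_{E^\perp}D'D''u(z)\|\le\varepsilon$ for all $z\in G$.

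First, by compactness, choose a finite $\varepsilon$-net $y_1,\dots,y_N$ of $K$. Let $E:=\operatorname{span}\{y_1,\dots,y_N\}$. This subspace is finite-dimensional, hence closed. Since $H$ is infinite-dimensional, $E\subsetneq H$ is proper; this is the only place the dimension assumption enters.

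Next, for $z\in G$ and $\|h\|\le1$, the vector $y:=D'D''u(z)h$ lies in $K$, so $\|y-y_j\|\le\varepsilon$ for some $j$. Because $P_{E^\perp}y_j=0$ and $\|P_{E^\perp}\|\le1$, we get
\[
\|P_{E^\perp}y\|=\|P_{E^\perp}(y-y_j)\|\le\varepsilon.
\]
Taking the supremum over $\|h\|\le1$ yields $\|P_{E^\perp}D'D''u(z)\|\le\varepsilon$ uniformly in $z\in G$.

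Finally, with $E_{\varepsilon,G}:=E$, the hypotheses of Proposition~\ref{prop:approx_common_range} hold for every $G$ and every $\varepsilon$, so $u$ is maximal. There is no real obstacle here; the only points needing care are that the net is finite, so $E$ is closed and proper, and that the estimate is uniform in $z$. Both follow from collective compactness of the whole family rather than compactness of each individual operator.
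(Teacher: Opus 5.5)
Your proposal is correct and follows the paper's proof essentially verbatim: take a finite $\varepsilon$-net of the relatively compact set $\{D'D''u(z)h:\ z\in G,\ \|h\|\le 1\}$, let $E$ be its (finite-dimensional, hence proper) span, and use $\|P_{E^\perp}y\|=\|P_{E^\perp}(y-y_j)\|\le\varepsilon$ to verify the hypothesis of Proposition~\ref{prop:approx_common_range}. You also spell out the step $P_{E^\perp}y_j=0$, which the paper leaves implicit.
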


\begin{proof}
Fix a bounded open set $G\Subset \Omega$ and $\varepsilon>0$. Since
\[
\{D'D''u(z)h:\ z\in G,\ \|h\|\le 1\}
\]
is relatively compact, there exist vectors $y_1,\dots,y_m\in H$ such that every element of this set lies within $\varepsilon$ of one of them. Let
\[
E:=\mathrm{span}\{y_1,\dots,y_m\}.
\]
Then $E$ is finite-dimensional, and hence proper. Moreover,
\[
\|P_{E^\perp}D'D''u(z)h\|\le \varepsilon
\qquad (z\in G,\ \|h\|\le 1).
\]
Thus
\[
\|P_{E^\perp}D'D''u(z)\|\le \varepsilon
\qquad (z\in G).
\]
By Proposition~\ref{prop:approx_common_range}, $u$ is maximal.
\end{proof}

The following proposition gives another sufficient condition for maximality. It is useful when the Levi form is dominated on bounded open sets by a fixed positive operator whose spectrum has an infimum equal to $0$.

\begin{proposition}\label{prop:model_majorant}
Let $\Omega\subset H$ be open and let $u\in \mathcal{PSH}(\Omega)\cap C^2(\Omega)$. Assume that for every bounded open set $G\Subset \Omega$, there exists a bounded positive operator
\[
T_G\in \mathcal B(H)
\]
such that
\[
D'D''u(z)\le T_G
\qquad (z\in G),
\]
and
\[
\inf\sigma(T_G)=0.
\]
Then $u$ is maximal in $\Omega$.
\end{proposition}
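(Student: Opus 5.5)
The plan is to reduce the statement directly to Theorem~\ref{prop:approx_null_maximality}, by producing, for each bounded open set $G\Subset\Omega$, a sequence of unit vectors along which the Levi forms are uniformly small on $G$.

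Fix a bounded open set $G\Subset\Omega$ and let $T_G$ be the operator given by the hypothesis. Since $T_G\ge 0$ and $\inf\sigma(T_G)=0$, Proposition~\ref{bp2} (the implication $(5)\Rightarrow(3)$), or equivalently the identity $\inf_{\|x\|=1}\langle T_Gx,x\rangle=\inf\sigma(T_G)$ from Proposition~\ref{prop:extrema}, yields unit vectors $x_n$ with
\[
\langle T_Gx_n,x_n\rangle\longrightarrow 0.
\]
The choice of the sequence depends only on $T_G$ and not on the point $z\in G$. This is the key point, and it makes the uniformity in $z$ automatic.

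Next, use the Loewner inequality $D'D''u(z)\le T_G$ together with $D'D''u(z)\ge 0$, which holds because $u$ is plurisubharmonic (Theorem~\ref{thm_basicprop1}(9)). These give
\[
0\le \langle D'D''u(z)x_n,x_n\rangle\le \langle T_Gx_n,x_n\rangle
\qquad (z\in G).
\]
Taking the supremum over $z\in G$ then shows
\[
\sup_{z\in G}\langle D'D''u(z)x_n,x_n\rangle\le\langle T_Gx_n,x_n\rangle\to 0.
\]

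Since $G$ was an arbitrary bounded open set with $\overline G\subset\Omega$, the hypothesis of Theorem~\ref{prop:approx_null_maximality} holds, and that theorem gives the maximality of $u$ in $\Omega$. There is no real obstacle here. The only point requiring care is that a uniform majorant converts the pointwise degeneracy $\inf\sigma(T_G)=0$ into the uniform asymptotic null-direction condition, and this is immediate once the $x_n$ are chosen from $T_G$ alone.
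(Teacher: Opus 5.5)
Your proposal is correct and follows the paper's proof essentially line for line. You choose unit vectors $x_n$ with $\langle T_Gx_n,x_n\rangle\to 0$ via Proposition~\ref{bp2}, bound $0\le\langle D'D''u(z)x_n,x_n\rangle\le\langle T_Gx_n,x_n\rangle$ uniformly in $z\in G$, and then invoke Theorem~\ref{prop:approx_null_maximality}.
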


\begin{proof}
Fix a bounded open set $G\Subset \Omega$. By Proposition~\ref{bp2}, there exists a sequence $(x_n)$ of unit vectors such that
\[
\langle T_Gx_n,x_n\rangle\to 0.
\]
Then, for every $z\in G$,
\[
0\le \langle D'D''u(z)x_n,x_n\rangle\le \langle T_Gx_n,x_n\rangle.
\]
Hence
\[
\sup_{z\in G}\langle D'D''u(z)x_n,x_n\rangle\longrightarrow 0.
\]
By Theorem~\ref{prop:approx_null_maximality}, $u$ is maximal in $\Omega$.
\end{proof}

The next two propositions collect two classes of examples that will be used below.

\begin{proposition}\label{prop:phiA_factory}
Let $A\in \mathcal B(H)$ be a positive operator, and let $\Phi\in C^{\infty}([0,\infty))$ satisfy
\[
\Phi'(t)\ge 0,
\qquad
\Phi'(t)+t\Phi''(t)\ge 0
\qquad (t\ge 0).
\]
Define
\[
u(z):=\Phi(\langle Az,z\rangle),
\qquad z\in H.
\]
Then
\[
u\in \mathcal{PSH}(H)\cap C^{\infty}(H),
\]
and
\[
D'D''u(z)
=
\Phi'(\langle Az,z\rangle)\,A
+
\Phi''(\langle Az,z\rangle)\,R_z,
\]
where
\[
R_z h:=\langle h,Az\rangle\,Az.
\]
If, in addition,
\[
\inf\sigma(A)=0,
\]
then $u$ is maximal on $H$. In particular,
\[
\operatorname{FSD}(u)\equiv 0.
\]
\end{proposition}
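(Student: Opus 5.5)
The proof has three parts. First compute the Levi form of $u=\Phi(q)$, where $q(z):=\langle Az,z\rangle$. Then use the two differential inequalities on $\Phi$ to show it is positive. Finally dominate it on bounded sets by a fixed operator with $\inf\sigma=0$ and apply Proposition~\ref{prop:model_majorant}.

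For the computation, note that $q$ is a continuous real quadratic form, hence $C^\infty$. Its complex derivatives are
\[
D'q(z)h=\langle Ah,z\rangle=\langle h,Az\rangle, \qquad D''q(z)h=\langle Az,h\rangle, \qquad D'D''q(z)(h,k)=\langle Ah,k\rangle .
\]
In operator form this says $D'D''q=A$; the convention should be checked against Theorem~\ref{thm_basicprop1}, and it is consistent with $D'D''\|z\|^2=I$. The chain rule for $\Phi\circ q$ then gives
\[
\langle D'D''u(z)h,h\rangle=\Phi'(q)\langle Ah,h\rangle+\Phi''(q)\,|\langle h,Az\rangle|^2 ,
\]
which is exactly $\Phi'(q)A+\Phi''(q)R_z$. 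Smoothness of $u$ follows because $\Phi$ is smooth on $[0,\infty)$ and $q\ge 0$.

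Positivity is the step that needs care, because $\Phi''$ may be negative. Where $\Phi''(q)\ge 0$ both terms are nonnegative. Where $\Phi''(q)<0$, use Cauchy--Schwarz for the semi-inner product $(h,k)\mapsto\langle Ah,k\rangle$:
\[
|\langle h,Az\rangle|^2=|\langle Ah,z\rangle|^2\le\langle Ah,h\rangle\langle Az,z\rangle=q\,\langle Ah,h\rangle .
\]
This yields $\langle D'D''u(z)h,h\rangle\ge(\Phi'(q)+q\Phi''(q))\langle Ah,h\rangle\ge 0$. Theorem~\ref{thm_basicprop1}(9) then gives $u\in\mathcal{PSH}(H)$.

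For maximality, fix a bounded $G\Subset H$ with $\|z\|\le R$ on $G$. Then $q$ takes values in the compact interval $[0,\|A\|R^2]$, so $\Phi'\le C_1$ and $|\Phi''|\le C_2$ there. Where $\Phi''(q)\ge0$, Cauchy--Schwarz again gives $\Phi''(q)R_z\le C_2\,q\,A\le C_2\|A\|R^2A$. Where $\Phi''(q)<0$, the term $\Phi''(q)R_z$ is $\le0$. In both cases
\[
D'D''u(z)\le (C_1+C_2\|A\|R^2)A=:T_G .
\]
Since $\inf\sigma(T_G)=0$ when $\inf\sigma(A)=0$, Proposition~\ref{prop:model_majorant} yields maximality, and Theorem~\ref{maximality_implies_zero} gives $\operatorname{FSD}(u)\equiv0$.

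I expect the only real obstacle to be the rigorous chain-rule computation of $D'D''$ in the paper's conventions. The domination by a multiple of $A$ is straightforward once Cauchy--Schwarz for $A$ is used.
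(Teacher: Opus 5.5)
Your proposal is correct and follows the paper's proof essentially step for step: the same Levi-form formula, the same Cauchy--Schwarz argument in the $A$-semi-inner product (with the case split on the sign of $\Phi''(q)$) for plurisubharmonicity, and the same domination $D'D''u(z)\le C_G A$ on bounded sets followed by Proposition~\ref{prop:model_majorant} and Theorem~\ref{maximality_implies_zero}. The only difference is your constant $C_1+C_2\|A\|R^2$ in place of the paper's slightly sharper $\sup_{0\le t\le \|A\|R^2}\bigl(\Phi'(t)+t|\Phi''(t)|\bigr)$, which is immaterial.
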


\begin{proof}
Set
\[
q(z):=\langle Az,z\rangle.
\]
Then $q\in C^{\infty}(H)$, and a direct computation gives
\[
\langle D'D''u(z)h,h\rangle
=
\Phi'(q(z))\langle Ah,h\rangle
+
\Phi''(q(z))|\langle Az,h\rangle|^2.
\]
Equivalently,
\[
D'D''u(z)
=
\Phi'(q(z))\,A+\Phi''(q(z))\,R_z.
\]

To show that $u$ is plurisubharmonic, fix $z,h\in H$. If $\Phi''(q(z))\ge 0$, then
\[
\langle D'D''u(z)h,h\rangle\ge 0
\]
is immediate. If $\Phi''(q(z))<0$, then the Cauchy--Schwarz inequality in the $A^{1/2}$-inner product gives
\[
|\langle Az,h\rangle|^2
\le
\langle Az,z\rangle\,\langle Ah,h\rangle
=
q(z)\,\langle Ah,h\rangle.
\]
Hence
\[
\langle D'D''u(z)h,h\rangle
\ge
\bigl(\Phi'(q(z))+q(z)\Phi''(q(z))\bigr)\langle Ah,h\rangle
\ge 0.
\]
Thus
\[
u\in \mathcal{PSH}(H)\cap C^{\infty}(H).
\]

Assume now that $\inf\sigma(A)=0$, and let $G\Subset H$ be a bounded open set. Set
\[
R^2:=\sup_{z\in G}\|z\|^2,
\qquad
M:=\|A\|R^2,
\qquad
C_G:=\sup_{0\le t\le M}\bigl(\Phi'(t)+t|\Phi''(t)|\bigr).
\]
For every $z\in G$ one has
\[
q(z)=\langle Az,z\rangle\le M,
\]
and
\[
\langle R_zh,h\rangle
=
|\langle Az,h\rangle|^2
\le
q(z)\,\langle Ah,h\rangle
\qquad (h\in H).
\]
Thus
\[
R_z\le q(z)\,A.
\]
If $\Phi''(q(z))\ge 0$, then
\[
D'D''u(z)\le \bigl(\Phi'(q(z))+q(z)\Phi''(q(z))\bigr)A\le C_GA.
\]
If $\Phi''(q(z))<0$, then
\[
D'D''u(z)\le \Phi'(q(z))A\le C_GA.
\]
Hence
\[
D'D''u(z)\le C_GA
\qquad (z\in G).
\]
Since
\[
\inf\sigma(C_GA)=0,
\]
Proposition~\ref{prop:model_majorant} implies that $u$ is maximal on $H$. The identity
\[
\operatorname{FSD}(u)\equiv 0
\]
now follows from Theorem~\ref{maximality_implies_zero}.
\end{proof}

\begin{proposition}\label{prop:diag_factory}
Let $H=\ell^2$, and let $u\in \mathcal{PSH}(H)\cap C^2(H)$. Assume that in the standard basis, one has
\[
D'D''u(z)=\mathrm{diag}\bigl(b_j(z)\bigr),
\qquad z\in H,
\]
with
\[
b_j(z)\ge 0
\qquad (j\in\N,\ z\in H).
\]
Assume further that for every bounded open set $G\Subset H$, there exists a sequence of finite sets
\[
I_n\subset \N,
\qquad |I_n|\to\infty,
\]
such that
\[
\sup_{z\in G}\frac1{|I_n|}\sum_{j\in I_n} b_j(z)\longrightarrow 0.
\]
Then $u$ is maximal on $H$.
\end{proposition}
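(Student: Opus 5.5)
The plan is to reduce Proposition~\ref{prop:diag_factory} to Theorem~\ref{prop:approx_null_maximality} by building, for each bounded open set $G\Subset H$, unit vectors $x_n$ spread out evenly over the coordinates in $I_n$. Take
\[
x_n:=\frac{1}{\sqrt{|I_n|}}\sum_{j\in I_n} e_j ,
\]
which is a unit vector because the $e_j$ are orthonormal and $I_n$ is finite.

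Since $D'D''u(z)=\mathrm{diag}(b_j(z))$ in the standard basis, we have
\[
\langle D'D''u(z)x_n,x_n\rangle=\sum_{j}b_j(z)\,|\langle x_n,e_j\rangle|^2=\frac{1}{|I_n|}\sum_{j\in I_n}b_j(z).
\]
This identity holds for every $z\in G$. Taking the supremum over $z\in G$ and applying the hypothesis gives
\[
\sup_{z\in G}\langle D'D''u(z)x_n,x_n\rangle\to0 .
\]
Since $G$ was an arbitrary bounded open set with $\overline G\subset H$, Theorem~\ref{prop:approx_null_maximality} applies with $\Omega=H$ and shows that $u$ is maximal on $H$.

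There is no real obstacle here. The only point needing care is the quadratic-form computation for a diagonal operator: $D'D''u(z)$ is bounded and diagonal, so $\langle D'D''u(z)h,h\rangle=\sum_j b_j(z)|h_j|^2$, and only finitely many coordinates of $x_n$ are nonzero. The condition $|I_n|\to\infty$ is not used in this argument. What is actually needed is that the averages tend to zero uniformly on $G$, and that is exactly what the hypothesis provides.
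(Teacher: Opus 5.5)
Your proposal is correct and is essentially the paper's proof. It uses the same unit vectors $x_n=|I_n|^{-1/2}\sum_{j\in I_n}e_j$, the same diagonal quadratic-form identity $\langle D'D''u(z)x_n,x_n\rangle=|I_n|^{-1}\sum_{j\in I_n}b_j(z)$, and the same appeal to Theorem~\ref{prop:approx_null_maximality}. Your side remark is also accurate: the argument needs only that the $I_n$ are nonempty, which is implicit in the averages, and not that $|I_n|\to\infty$.
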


\begin{proof}
Fix a bounded open set $G\Subset H$. For each $n\in\N$, define
\[
x_n:=\frac1{\sqrt{|I_n|}}\sum_{j\in I_n} e_j.
\]
Then $\|x_n\|=1$, and for every $z\in G$,
\[
\langle D'D''u(z)x_n,x_n\rangle
=
\frac1{|I_n|}\sum_{j\in I_n} b_j(z).
\]
Therefore
\[
\sup_{z\in G}\langle D'D''u(z)x_n,x_n\rangle\longrightarrow 0.
\]
By Theorem~\ref{prop:approx_null_maximality}, $u$ is maximal on $H$.
\end{proof}

Proposition~\ref{prop:phiA_factory} contains Examples~\ref{e111}, \ref{e112}, \ref{eL2}, and~\ref{e113}. In Example~\ref{e112}, its maximality part applies exactly when $\inf_j w_j=0$. Proposition~\ref{prop:diag_factory} applies to Examples~\ref{e111}, \ref{eEll2Noncompact}, \ref{ez4}, and~\ref{eFiniteRankMoving}, and also to Example~\ref{e112} when $\inf_j w_j=0$.

\section{Examples}

The next examples show that $\operatorname{FSD}(u)\equiv 0$ may occur even when the Levi form is injective at every point. Thus,  conditions $(1)$--$(2)$ above do not force the existence of an actual null direction as in $(3)$--$(4)$.

\begin{example}\label{e111}
Let $H=\ell^2$ and define
\[
u(z):=\sum_{j=1}^{\infty}\frac{|z_j|^2}{j},
\qquad z=(z_j)\in H.
\]
Then $u\in \mathcal{PSH}(H)\cap C^{\infty}(H)$ and
\[
D'D''u(z)=T:=\mathrm{diag}\!\left(\frac1j\right)
\qquad (z\in H).
\]
The operator $T$ is positive, compact, and injective, and $\inf\sigma(T)=0$. Hence
\[
\operatorname{FSD}(u)\equiv 0,
\]
and $u$ is maximal on $H$ by Corollary~\ref{cor:constant_levi}.

Moreover, condition $(2)$ above holds although condition $(4)$ fails. Indeed, let
\[
C^2:=\sum_{j=1}^{\infty}\frac{1}{j\log^2(j+1)}<\infty,
\qquad
x_j:=\frac{1}{C\sqrt{j}\log(j+1)}.
\]
Then $x=(x_j)\in \ell^2$ and $\|x\|=1$. Since
\[
\langle (\log T)x,x\rangle
=-\frac{1}{C^2}\sum_{j=1}^{\infty}\frac{\log j}{j\log^2(j+1)}=-\infty,
\]
we have
\[
\Delta_x(T)=0.
\]
Here the bracket is understood in the spectral-integral sense used in the definition of $\Delta_x$. Since $T$ is an injective diagonal operator, there is no nonzero $h\in H$ with $\langle Th,h\rangle=0$. Thus, a universal vector with $\Delta_x(D'D''u)=0$ may exist even though there is no actual null direction.\hfill{$\Box$}
\end{example}

\begin{example}\label{eL2}
Let $H=L^2([0,1],dt)$ and define
\[
u(h):=\int_0^1 t\,|h(t)|^2\,dt,
\qquad h\in H.
\]
Then $u\in \mathcal{PSH}(H)\cap C^{\infty}(H)$ and
\[
D'D''u(h)=M_t,
\qquad (M_t\varphi)(t)=t\,\varphi(t).
\]
The operator $M_t$ is bounded, positive, and injective, and
\[
\sigma(M_t)=[0,1].
\]
Hence
\[
\operatorname{FSD}(u)\equiv 0,
\]
and $u$ is maximal on $H$ by Corollary~\ref{cor:constant_levi}.

Define
\[
h(t):=\frac{1}{\sqrt{t}\,|\log t|}\,\mathbf 1_{(0,e^{-1})}(t).
\]
Then $\|h\|_{L^2}=1$ and
\[
\Delta_h(M_t)=\exp\bigl(\langle (\log M_t)h,h\rangle\bigr)
=\exp\!\left(\int_0^{e^{-1}}\log t\,\frac{dt}{t(\log t)^2}\right)=0,
\]
since $\int_0^{e^{-1}}\frac{dt}{t\log t}=-\infty$. Again, $M_t$ is injective, so condition $(2)$ may hold even though condition $(4)$ fails.

This example also shows that the phenomenon is not confined to compact Levi forms: unlike Example~\ref{e111}, the operator $M_t$ is not compact.\hfill{$\Box$}
\end{example}

Examples~\ref{e111} and~\ref{eL2} show that pointwise strict positivity of the Levi form is compatible with $\operatorname{FSD}(u)\equiv 0$ and with maximality. What maximality excludes is bounded invertibility of the Levi form, not injectivity. Corollary~\ref{cor:constant_levi} resolves the constant-Levi-form case completely.

\begin{example}\label{e112}
Let $H=\ell^2$ and define
\[
u(\{a_j\})=\sum_{j=1}^{\infty}w_j|a_j|^2,
\]
where $(w_j)$ is a bounded sequence of nonnegative numbers. Then
\[
u\in \mathcal{PSH}(H)\cap C^{\infty}(H),
\qquad
D'D''u(a)=T:=\mathrm{diag}(w_j)
\quad (a\in H).
\]
By Proposition~\ref{prop:extrema},
\[
\operatorname{FSD}(u)(a)=\inf\sigma(T)=\inf_j w_j
\qquad (a\in H).
\]
Consequently,
\[
u\text{ is maximal on }H \iff \inf_j w_j=0.
\]
Indeed, if $\inf_j w_j>0$, then $T\ge (\inf_j w_j)I$ and Theorem~\ref{maximality_implies_zero} shows that $u$ cannot be maximal. If $\inf_j w_j=0$, then Corollary~\ref{cor:constant_levi} gives maximality. In particular, the compact--injective case $w_j>0$ and $w_j\to 0$ is maximal. \hfill{$\Box$}
\end{example}

\begin{example}\label{eEll2Noncompact}
Let $H=\ell^2$, and define a bounded positive sequence $(w_j)$ by
\[
w_{2n-1}:=\frac1n,
\qquad
w_{2n}:=1
\qquad (n\in\N).
\]
Set
\[
u(z):=\sum_{j=1}^{\infty}w_j|z_j|^2+\sum_{j=1}^{\infty}|z_j|^4,
\qquad z=(z_j)\in H.
\]
Then
\[
u\in \mathcal{PSH}(H)\cap C^{\infty}(H),
\]
and
\[
D'D''u(z)=\mathrm{diag}\!\bigl(w_j+4|z_j|^2\bigr)
\qquad (z\in H).
\]
In particular, $D'D''u(z)$ is positive for every $z\in H$.

Since
\[
w_j+4|z_j|^2>0
\qquad (j\in\N),
\]
the operator $D'D''u(z)$ is injective for every $z\in H$.

On the other hand, since $z_j\to 0$, we have
\[
w_{2n-1}+4|z_{2n-1}|^2
=
\frac1n+4|z_{2n-1}|^2
\longrightarrow 0.
\]
Hence
\[
\inf \sigma\bigl(D'D''u(z)\bigr)=0,
\qquad
\operatorname{FSD}(u)(z)=0
\qquad (z\in H)
\]
by Proposition~\ref{prop:extrema}.

At the same time,
\[
w_{2n}+4|z_{2n}|^2
=
1+4|z_{2n}|^2
\longrightarrow 1,
\]
so $D'D''u(z)$ is not compact for any $z\in H$.

To prove maximality, take
\[
I_n:=\{1,3,\dots,2n-1\}.
\]
Then
\[
\frac1{|I_n|}\sum_{j\in I_n}\bigl(w_j+4|z_j|^2\bigr)
=
\frac1n\sum_{k=1}^n\left(\frac1k+4|z_{2k-1}|^2\right).
\]
Therefore
\[
\frac1n\sum_{k=1}^n\left(\frac1k+4|z_{2k-1}|^2\right)
\le
\frac{H_n}{n}+\frac4n\|z\|^2,
\]
where
\[
H_n:=\sum_{k=1}^n \frac1k.
\]
Since
\[
\frac{H_n}{n}\to 0,
\]
Proposition~\ref{prop:diag_factory} implies that $u$ is maximal on $H$.

Thus this gives an $\ell^2$-only example with
\[
\operatorname{FSD}(u)\equiv 0,
\]
whose Levi form is nonconstant, injective, and noncompact at every point. It may therefore be used in place of Example~\ref{eL2} if one prefers to stay entirely in $\ell^2$. \hfill{$\Box$}
\end{example}

\begin{example}\label{e113}
Assume that $H$ is infinite-dimensional, let $A\in \mathcal B(H)$ be a positive, compact, injective operator, and define
\[
u(z):=\log\bigl(1+\langle Az,z\rangle\bigr),
\qquad z\in H.
\]
Then $u\in \mathcal{PSH}(H)\cap C^{\infty}(H)$. A direct computation gives
\[
\langle D'D''u(z)h,h\rangle
=
\frac{(1+\langle Az,z\rangle)\langle Ah,h\rangle-|\langle Az,h\rangle|^2}
{(1+\langle Az,z\rangle)^2},
\qquad z,h\in H.
\]
Equivalently,
\[
D'D''u(z)
=
\frac{1}{1+\langle Az,z\rangle}\,A
-\frac{1}{(1+\langle Az,z\rangle)^2}\,R_z,
\]
where
\[
R_z h:=\langle h,Az\rangle\,Az.
\]
Hence $D'D''u(z)$ is compact for every $z\in H$.

Moreover, by the Cauchy--Schwarz inequality in the $A^{1/2}$-inner product,
\[
|\langle Az,h\rangle|^2
\le
\langle Az,z\rangle\,\langle Ah,h\rangle,
\]
and therefore
\[
\langle D'D''u(z)h,h\rangle
\ge
\frac{\langle Ah,h\rangle}{(1+\langle Az,z\rangle)^2}.
\]
Since $A$ is injective, $\langle Ah,h\rangle>0$ for every $0\ne h\in H$, so $D'D''u(z)$ is injective for every $z\in H$.

Choose unit vectors $(x_n)$ such that
\[
\langle Ax_n,x_n\rangle\to 0,
\]
which is possible by Proposition~\ref{bp2} because a positive compact injective operator on an infinite-dimensional Hilbert space satisfies $\inf\sigma(A)=0$. Then for every $z\in H$,
\[
0\le
\langle D'D''u(z)x_n,x_n\rangle
\le
\frac{\langle Ax_n,x_n\rangle}{1+\langle Az,z\rangle}
\le
\langle Ax_n,x_n\rangle.
\]
Hence, for every bounded open set $G\Subset H$,
\[
\sup_{z\in G}\langle D'D''u(z)x_n,x_n\rangle\longrightarrow 0,
\]
and Theorem~\ref{prop:approx_null_maximality} implies that $u$ is maximal on $H$. In particular, $\operatorname{FSD}(u)\equiv 0$ by Theorem~\ref{maximality_implies_zero}.

Thus $u$ is a nonquadratic maximal plurisubharmonic function whose Levi form is compact and injective at every point. \hfill{$\Box$}
\end{example}

\begin{example}\label{ez4}
Let $H=\ell^2$ and define
\[
u(z):=\sum_{j=1}^{\infty}|z_j|^4,
\qquad z=(z_j)\in H.
\]
Then $u\in C^{\infty}(H)$, and a direct computation gives
\[
D'D''u(z)=\mathrm{diag}\!\left(4|z_j|^2\right)
\qquad (z\in H).
\]
In particular, $u\in \mathcal{PSH}(H)$. Since $z\in \ell^2$ implies $|z_j|\to 0$, the operator $D'D''u(z)$ is a positive compact diagonal operator for every $z\in H$, and
\[
\inf\sigma\bigl(D'D''u(z)\bigr)=0.
\]
Hence
\[
\operatorname{FSD}(u)\equiv 0
\]
by Proposition~\ref{prop:extrema}.

To prove maximality, define
\[
x_n:=\frac1{\sqrt n}(1,\dots,1,0,0,\dots)\in \ell^2,
\qquad n\in\mathbb N.
\]
Then $\|x_n\|=1$, and for every $z\in H$,
\[
\langle D'D''u(z)x_n,x_n\rangle
=
\frac4n\sum_{j=1}^n |z_j|^2
\le
\frac4n\|z\|^2.
\]
Therefore, for every bounded open set $G\Subset H$,
\[
\sup_{z\in G}\langle D'D''u(z)x_n,x_n\rangle
\le
\frac4n\sup_{z\in G}\|z\|^2
\longrightarrow 0.
\]
By Theorem~\ref{prop:approx_null_maximality}, $u$ is maximal on $H$.

Thus $u$ is a simple nonquadratic maximal plurisubharmonic function whose Levi form is compact and nonconstant. Unlike Examples~\ref{e111} and~\ref{eL2}, the Levi form here need not be injective at every point. \hfill{$\Box$}
\end{example}

\begin{example}\label{eFiniteRankMoving}
Let $H=\ell^2$, and choose a nondecreasing function $\eta\in C^{\infty}([0,\infty))$ such that
\[
0\le \eta\le 1,
\qquad
\eta(t)=0 \ \text{for } 0\le t\le 1,
\qquad
\eta(t)=1 \ \text{for } t\ge 2.
\]
Set
\[
\chi(t):=\int_0^t \eta(s)\,ds,
\qquad
a(t):=\chi'(t)+t\chi''(t)=\eta(t)+t\eta'(t),
\]
and define
\[
u(z):=\sum_{j=1}^{\infty}\chi(|z_j|^2),
\qquad z=(z_j)\in H.
\]
Since $\chi(t)=0$ for $0\le t\le 1$, the series is locally finite. Indeed, if $z\in H$, then $z_j\to 0$, so there exists $N\in\N$ such that
\[
|z_j|\le \frac12
\qquad (j>N).
\]
If $w\in B(z,\tfrac12)$, then
\[
|w_j|\le |w_j-z_j|+|z_j|<1
\qquad (j>N),
\]
hence
\[
\chi(|w_j|^2)=0
\qquad (j>N).
\]
Therefore, on $B(z,\tfrac12)$,
\[
u(w)=\sum_{j=1}^N \chi(|w_j|^2),
\]
so
\[
u\in C^{\infty}(H).
\]

A direct computation gives
\[
D'D''u(z)=\mathrm{diag}\bigl(a(|z_j|^2)\bigr)
\qquad (z\in H).
\]
Since $\eta\ge 0$ and $\eta'\ge 0$, we have
\[
a(t)\ge 0
\qquad (t\ge 0),
\]
and therefore
\[
u\in \mathcal{PSH}(H).
\]

Moreover, $a(t)=0$ for $0\le t\le 1$. Hence, for each fixed $z\in H$, only finitely many diagonal entries
\[
a(|z_j|^2)
\]
are nonzero. Thus $D'D''u(z)$ has finite rank for every $z\in H$. Since $H$ is infinite-dimensional, it follows that
\[
\inf\sigma\bigl(D'D''u(z)\bigr)=0,
\qquad
\operatorname{FSD}(u)(z)=0
\qquad (z\in H)
\]
by Proposition~\ref{prop:extrema}.

To prove maximality, take
\[
I_n:=\{1,\dots,n\}.
\]
Then
\[
\frac1{|I_n|}\sum_{j\in I_n} a(|z_j|^2)
=
\frac1n\sum_{j=1}^n a(|z_j|^2).
\]
Since $a$ is bounded on $[0,\infty)$, say $0\le a\le M$, we have
\[
\frac1n\sum_{j=1}^n a(|z_j|^2)
\le
\frac{M}{n}\#\{1\le j\le n:\ |z_j|>1\}.
\]
Now
\[
\#\{1\le j\le n:\ |z_j|>1\}
\le
\sum_{j=1}^{\infty} |z_j|^2
=
\|z\|^2,
\]
and therefore
\[
\frac1n\sum_{j=1}^n a(|z_j|^2)
\le
\frac{M}{n}\|z\|^2.
\]
By Proposition~\ref{prop:diag_factory}, $u$ is maximal on $H$.

Finally, if $j\in\N$ and $z:=2e_j$, then
\[
D'D''u(z)=a(4)\,P_{\C e_j},
\]
where $a(4)>0$ and $P_{\C e_j}$ denotes the orthogonal projection onto $\C e_j$. Hence
\[
\mathrm{Ran}\bigl(D'D''u(2e_j)\bigr)=\C e_j.
\]
It follows that the ranges
\[
\mathrm{Ran}\bigl(D'D''u(z)\bigr)
\]
are not contained in any fixed proper closed subspace of $H$.

Also, if $0\ne x\in H$, choose $j$ with $x_j\ne 0$. Then
\[
\langle D'D''u(2e_j)x,x\rangle
=
a(4)|x_j|^2>0.
\]
Thus, there is no fixed nonzero vector $x\in H$ such that
\[
\langle D'D''u(z)x,x\rangle=0
\qquad (z\in H).
\]
Moreover, the family
\[
\{D'D''u(z):\ \|z\|<3\}
\]
is not collectively compact, since it contains the rank-one projections $a(4)P_{\C e_j}$ for all $j\in\N$. Thus this example is not covered by Corollary~\ref{maximal}, Proposition~\ref{prop:common-range}, or Corollary~\ref{cor:collectively_compact}. \hfill{$\Box$}
\end{example}

The next lemma explains why the constant-function argument cannot be used to disprove maximality for quadratic forms whose Levi form has an infimum of the spectrum equal to $0$.

\begin{lemma}\label{lem:e111_boundary_inf_zero}
Let $H$ be a Hilbert space and let $A\in\mathcal B(H)$ be a positive self-adjoint operator with $\inf\sigma(A)=0$. Define $u(z):=\langle Az,z\rangle$. Then for every bounded open set $G\subset H$ with $0\in G$, one has
\[
\inf_{\xi\in\partial G} u(\xi)=0.
\]
In particular, the constant-function argument cannot be used to disprove the maximality of such a quadratic $u$ on bounded domains.
\end{lemma}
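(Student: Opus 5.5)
Since $u\ge 0$, it suffices to exhibit, for each $\varepsilon>0$, a point $\xi\in\partial G$ with $u(\xi)\le\varepsilon$. The plan is to move from $0$ along a direction $x$ in which $A$ is almost degenerate until the ray leaves $G$.

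First, fix $\varepsilon>0$ and set $R:=\sup_{z\in G}\|z\|<\infty$, which is finite because $G$ is bounded. By Proposition~\ref{bp2}, applied with $\inf\sigma(A)=0$, there is a unit vector $x$ with $\langle Ax,x\rangle\le \varepsilon/(R+1)^2$.

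Next, consider the real ray $t\mapsto tx$ for $t\ge 0$. Let
\[
t_0:=\sup\{s\ge 0:\ tx\in G \text{ for all } t\in[0,s]\}.
\]
Since $G$ is open and $0\in G$, we have $t_0>0$. Since $\|tx\|=t$ and $G\subset B(0,R]$, we also have $t_0\le R$ (if $t_0>R$ the point $tx$ with $R<t<t_0$ would lie in $G$ with norm $>R$). Put $\xi:=t_0x$.

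Then $\xi\in\overline G$, because $tx\in G$ for all $t<t_0$. Moreover $\xi\notin G$: otherwise openness of $G$ would give $tx\in G$ for $t$ slightly larger than $t_0$, contradicting the definition of $t_0$. Hence $\xi\in\partial G$, and
\[
u(\xi)=t_0^2\langle Ax,x\rangle\le R^2\cdot\frac{\varepsilon}{(R+1)^2}<\varepsilon .
\]
Letting $\varepsilon\to0$ gives $\inf_{\partial G}u=0$.

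For the ``in particular'': the constant-function argument would need a constant $c$ with $c\le u$ on $\partial G$ and $c>u$ somewhere in $G$. But any admissible constant satisfies $c\le\inf_{\partial G}u=0$, while $u\ge 0=u(0)$ on $G$, so no constant exceeds $u$ at any point of $G$.

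There is no real obstacle here. The only point needing care is that $\xi\in\partial G$, which is handled by the connectedness argument along the ray above; no compactness of $\overline G$ is used.
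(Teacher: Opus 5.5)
Your proposal is correct and follows essentially the same route as the paper: use Proposition~\ref{bp2} to pick a unit vector $x$ with $\langle Ax,x\rangle$ small, follow the ray $t\mapsto tx$ to a point where it leaves $G$, and bound $u$ there by $R^2\langle Ax,x\rangle$. The differences are cosmetic. You use the first exit time, whereas the paper uses $\sup\{t>0:\ tx_n\in G\}$. You take one vector per $\varepsilon$ rather than a sequence. You also justify that the exit point lies in $\partial G$ and spell out the ``in particular'' clause, both of which the paper leaves implicit.
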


\begin{proof}
Choose $0<r<R$ such that $B(0,r)\subset G\subset B(0,R)$. By Proposition~\ref{bp2}, there exists a sequence of unit vectors $(x_n)$ such that
\[
\langle Ax_n,x_n\rangle\longrightarrow 0.
\]
For each $n$ set
\[
t_n:=\sup\{t>0:\ tx_n\in G\}.
\]
Then $r\le t_n\le R$ and $t_nx_n\in \partial G$. Therefore
\[
u(t_nx_n)=\langle A(t_nx_n),t_nx_n\rangle=t_n^2\langle Ax_n,x_n\rangle\le R^2\langle Ax_n,x_n\rangle\longrightarrow 0,
\]
which proves the claim.
\end{proof}

\begin{lemma}\label{lem:boundary_max}
Let $\Omega\subset H$ be a bounded open set and let $w\in \mathcal{PSH}(\Omega)\cap C(\overline\Omega)$. If $w\le 0$ on $\partial\Omega$, then $w\le 0$ on $\Omega$.
\end{lemma}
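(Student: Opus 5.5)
The plan is to reduce to the one-variable maximum principle by slicing along a single complex line, exactly as in the proof of Theorem~\ref{prop:approx_null_maximality}. This sidesteps the real difficulty in infinite dimensions: $\overline\Omega$ need not be compact, so $\sup_{\overline\Omega} w$ need not be attained and the usual finite-dimensional argument via an interior maximum is not available.

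Fix $z_0\in\Omega$ and any unit vector $x\in H$. Let $U$ be the connected component containing $0$ of the open set $\{t\in\C:\ z_0+tx\in\Omega\}$. Since $\Omega$ is bounded, $|t|\le \sup_{z\in\overline\Omega}\|z-z_0\|<\infty$ for all $t\in U$. Hence $U$ is a bounded planar domain, and its closure is compact. The function $\varphi(t):=w(z_0+tx)$ is subharmonic on $U$, because $w$ is plurisubharmonic. It is also continuous on $\overline U$, because $w\in C(\overline\Omega)$ and $z_0+\overline U x\subset\overline\Omega$.

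Next I will show that $z_0+\tau x\in\partial\Omega$ for every $\tau\in\partial U$. I will repeat verbatim the connected-component argument from the proof of Theorem~\ref{prop:approx_null_maximality}. First, $z_0+\tau x\in\overline\Omega$ as a limit of points of $\Omega$. If $z_0+\tau x$ were in $\Omega$, then a small disc around $\tau$ would lie in the open slice and meet $U$. This would contradict the maximality of the component $U$. Therefore $\varphi(\tau)=w(z_0+\tau x)\le 0$ on $\partial U$.

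Finally, the classical maximum principle for subharmonic functions on the bounded planar domain $U$ applies. Since $\varphi$ is upper semicontinuous on the compact set $\overline U$ and $\varlimsup_{t\to\tau}\varphi(t)\le\varphi(\tau)\le 0$ for $\tau\in\partial U$, it gives $\varphi\le 0$ on $U$. In particular $w(z_0)=\varphi(0)\le 0$. As $z_0$ was arbitrary, $w\le 0$ on $\Omega$.

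The only delicate step is the boundary identification $z_0+\partial U\,x\subset\partial\Omega$, and it is already handled in the paper. No compactness of $\overline\Omega$ is needed, since all compactness is transferred to the planar slice.
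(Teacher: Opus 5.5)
Your proposal is correct and follows essentially the same route as the paper. You restrict $w$ to the connected component $U\ni 0$ of the slice $\{t\in\mathbb{C}:\ z_0+tx\in\Omega\}$, show that $z_0+\tau x\in\partial\Omega$ for every $\tau\in\partial U$, and apply the planar maximum principle on the bounded domain $U$. Your explicit appeal to the maximality of the component $U$ in the boundary identification is, if anything, slightly more careful than the paper's brief case distinction.
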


\begin{proof}
Fix $a\in\Omega$ and a unit vector $x\in H$. Let $U$ be the connected component containing $0$ of the open set
\[
\{t\in\mathbb C:\ a+t x\in \Omega\}.
\]
Since $\Omega$ is bounded, $U$ is a bounded planar domain. The function
\[
t\longmapsto w(a+t x)
\]
is subharmonic on $U$. If $\tau\in \partial U$, then $a+\tau x\in\partial\Omega$: otherwise, if $a+\tau x\in\Omega$, then $\tau$ would belong to the open set $\{t\in\mathbb C:\ a+t x\in \Omega\}$, while if $a+\tau x\notin\overline\Omega$, then a neighborhood of $\tau$ would be disjoint from that set. Hence
\[
\varlimsup_{t\to\tau} w(a+t x)\le 0.
\]
By the maximum principle for subharmonic functions on planar domains, $w(a)\le 0$. Since $a$ was arbitrary, the proof is complete.
\end{proof}

\section{Comparison principle}

We now prove several comparison principles for plurisubharmonic functions.

\begin{theorem}\label{cp1}
Let $\Omega\subset H$ be a bounded domain. Let $u,v\in \mathcal{PSH}(\Omega)\cap C^2(\overline\Omega)$. Assume:
\begin{enumerate}\itemsep2mm
\item $mI\leq D'D''v(z)\leq MI$ for all $z\in \Omega$ and some $0<m\leq M$;

\item for every $z\in \Omega$ and every unit vector $x\in H$,
\[
\Delta_x(D'D''v(z))\leq \Delta_x(D'D''u(z)).
\]
\end{enumerate}
If in addition $S\left(\frac Mm\right)u\leq v$ on $\partial\Omega$, then $S\left(\frac Mm\right)u-v$ is plurisubharmonic and nonpositive on $\Omega$.
\end{theorem}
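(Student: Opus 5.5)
The plan is to show that the Levi form of $S\left(\frac Mm\right)u-v$ is positive semidefinite on $\Omega$, which gives plurisubharmonicity by Theorem~\ref{thm_basicprop1}(9). Nonpositivity then follows from Lemma~\ref{lem:boundary_max}. Throughout, fix $z\in\Omega$ and write $A:=D'D''u(z)$, $B:=D'D''v(z)$ and $h:=M/m$. Both $A$ and $B$ are positive operators, and hypothesis (1) gives $mI\le B\le MI$.

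\textbf{Step 1: $A$ is invertible.} To use the chaotic-order machinery we need $A>0$, and this is the step I expect to need the most care, since only $u\in\mathcal{PSH}(\Omega)$ is assumed. By Proposition~\ref{basic_properties}(3), $\Delta_x(B)\ge\|B^{-1}\|^{-1}\ge m$ for every unit vector $x$. Hypothesis (2) then gives $\Delta_x(A)\ge m$ for all unit $x$, so $\inf_{\|x\|=1}\Delta_x(A)\ge m$. Proposition~\ref{prop:extrema} identifies this infimum with $\inf\sigma(A)$. Hence $A\ge mI>0$, and in particular $A$ is invertible with bounded logarithm.

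\textbf{Step 2: chaotic order and the Kantorovich bound.} Now $A,B>0$ and $\Delta_x(A)\ge\Delta_x(B)$ for all unit $x$. Proposition~\ref{log} therefore gives $A\gg B$. Applying Theorem~\ref{KTI}, implication (1)$\Rightarrow$(3), with $p=1$ and $S(h,1)=S(h)$, we obtain
\[
S\!\left(\tfrac Mm\right)D'D''u(z)\ge D'D''v(z).
\]
If $m=M$, then $S=1$ and the theorem covers this degenerate case directly. Since $S(M/m)$ is a positive constant, $D'D''\bigl(S(M/m)u-v\bigr)(z)=S(M/m)A-B\ge0$ for every $z\in\Omega$. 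As $S(M/m)u-v\in C^2$, Theorem~\ref{thm_basicprop1}(9) shows that it is plurisubharmonic on $\Omega$.

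\textbf{Step 3: nonpositivity.} Set $w:=S(M/m)u-v$. Then $w\in\mathcal{PSH}(\Omega)\cap C(\overline\Omega)$, because $u,v\in C^2(\overline\Omega)$, and $w\le0$ on $\partial\Omega$ by assumption. Since $\Omega$ is bounded, Lemma~\ref{lem:boundary_max} gives $w\le0$ on $\Omega$, which completes the argument.
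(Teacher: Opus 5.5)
Your proposal is correct and follows essentially the same route as the paper: obtain $D'D''u(z)\ge mI$ from the lower bound on $\Delta_x(D'D''v(z))$ together with Proposition~\ref{prop:extrema}, deduce the chaotic order from Proposition~\ref{log}, apply Theorem~\ref{KTI} with $p=1$, and conclude with Lemma~\ref{lem:boundary_max}. The only differences are cosmetic: you bound $\Delta_x(D'D''v(z))$ from below via Proposition~\ref{basic_properties}(3) rather than Proposition~\ref{prop:extrema}, and you absorb the case $m=M$ into Theorem~\ref{KTI} instead of treating it separately.
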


Condition \textup{(2)} is designed so that Proposition~\ref{log} yields the chaotic order 
\[
D'D''u(z)\gg D'D''v(z)
\]
pointwise. In general, a single vector $x$ does not suffice to conclude $\log A\ge \log B$ from $\Delta_x(A)\ge \Delta_x(B)$ unless additional commutativity hypotheses are imposed.

\begin{proof}
Fix $z\in\Omega$ and set
\[
A:=D'D''u(z),\qquad B:=D'D''v(z).
\]
If $m=M$, then $B=mI$ and $S(M/m)=S(1)=1$.
Assumption \textup{(2)} gives $\Delta_x(A)\ge m$ for every unit $x$, so Proposition~\ref{prop:extrema} gives $A\ge mI=B$.
Since $z$ was arbitrary, $D'D''(u-v)\ge 0$ on $\Omega$.
Therefore $u-v$ is plurisubharmonic on $\Omega$, and the boundary assumption $u\le v$ on $\partial\Omega$ gives $u-v\le 0$ on $\Omega$ by Lemma~\ref{lem:boundary_max}.
This proves the endpoint case. Thus, assume $m<M$.

Assumption \textup{(1)} gives $B\ge mI$, hence $\Delta_x(B)\ge m$ for every unit $x$ by Proposition~\ref{prop:extrema}.
Assumption \textup{(2)} therefore yields $\Delta_x(A)\ge m$ for every unit $x$, so again by Proposition~\ref{prop:extrema} we have $A\ge mI$ and in particular $A>0$.

Now \textup{(2)} and Proposition~\ref{log} imply $A\gg B$, equivalently $\log A\ge \log B$. Since $mI\le B\le MI$, Theorem~\ref{KTI} with $p=1$ gives
\[
S\!\left(\frac Mm\right)A\ge B.
\]
Equivalently,
\[
D'D''\!\bigl(S(\tfrac Mm)u-v\bigr)(z)=S\!\left(\frac Mm\right)A-B\ge 0,
\]
so $S(\frac Mm)u-v$ is plurisubharmonic on $\Omega$. By the boundary assumption, $S(\frac Mm)u-v\le 0$ on $\partial\Omega$, and Lemma~\ref{lem:boundary_max} yields $S(\frac Mm)u-v\le 0$ throughout $\Omega$.
\end{proof}

\begin{theorem}\label{cp2}
Let $\Omega\subset H$ be a bounded domain and let $u,v\in \mathcal{PSH}(\Omega)\cap C^2(\overline\Omega)$. Assume that $mI\leq D'D''v(z)\leq MI$ for all $z\in \Omega$ and some $0<m\leq M$. Suppose:
\begin{enumerate}\itemsep2mm
\item $u+C(m,M)\|z\|^2\leq v$ on $\partial \Omega$;

\item for every $z\in \Omega$ and every unit vector $x\in H$,
\[
\Delta_x(D'D''v(z))\leq \Delta_x(D'D''u(z)).
\]
\end{enumerate}
Then $u+C(m,M)\|z\|^2-v$ is plurisubharmonic and nonpositive on $\Omega$. Here $C(m,M)$ is the constant defined in Proposition~\ref{prop:additive_reverse}.
\end{theorem}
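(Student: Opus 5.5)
The argument follows the proof of Theorem~\ref{cp1}, with the multiplicative Kantorovich bound replaced by the additive one. Fix $z\in\Omega$ and put $A:=D'D''u(z)$, $B:=D'D''v(z)$.

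\emph{Endpoint case $m=M$.} Here $B=mI$ and $C(m,M)=0$. Assumption \textup{(2)} gives $\Delta_x(A)\ge m$ for every unit $x$. Proposition~\ref{prop:extrema} then yields $A\ge mI=B$, so $u-v$ is plurisubharmonic. Lemma~\ref{lem:boundary_max} with boundary condition \textup{(1)} gives $u-v\le 0$.

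\emph{Case $m<M$.} The steps are as follows.

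\begin{enumerate}
\item Show that $A$ is invertible. Since $B\ge mI$, Proposition~\ref{prop:extrema} gives $\Delta_x(B)\ge m$ for all unit $x$. Assumption \textup{(2)} then gives $\Delta_x(A)\ge m$, and applying Proposition~\ref{prop:extrema} again yields $A\ge mI>0$. This is needed because Proposition~\ref{log} and Theorem~\ref{KTI} require both operators to be invertible.

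\item With $A,B>0$, assumption \textup{(2)} and Proposition~\ref{log} give the chaotic order $A\gg B$.

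\item Apply Theorem~\ref{KTI}\textup{(4)} with $p=1$, using $mI\le B\le MI$. This gives $A+C_1(m,M)I\ge B$.

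\item Check that $C_1(m,M)=C(m,M)$, the constant of Proposition~\ref{prop:additive_reverse}. Since $S(h,1)=S(h)$, this is immediate from the definitions:
\[
C_1(m,M)=\frac{M-m}{\log M-\log m}\log S\!\left(\frac Mm\right)=C(m,M).
\]
\end{enumerate}

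We then use $D'D''\|z\|^2=I$, as in the proof of Theorem~\ref{maximality_implies_zero}, to obtain
\[
D'D''\bigl(u+C(m,M)\|z\|^2-v\bigr)(z)=A+C(m,M)I-B\ge 0 .
\]
This holds for every $z\in\Omega$. The function $w:=u+C(m,M)\|\cdot\|^2-v$ lies in $C^2(\overline\Omega)$, so Theorem~\ref{thm_basicprop1}\textup{(9)} shows that it is plurisubharmonic on $\Omega$. Boundary condition \textup{(1)} says $w\le 0$ on $\partial\Omega$. Since $w\in\mathcal{PSH}(\Omega)\cap C(\overline\Omega)$, Lemma~\ref{lem:boundary_max} gives $w\le 0$ on $\Omega$.

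The only genuine care point is step 1: the invertibility of $A$ must be deduced before the chaotic-order machinery can be applied. The matching of constants in step 4 is purely bookkeeping.
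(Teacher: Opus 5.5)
Your proposal is correct and follows the paper's proof essentially step for step. It handles the endpoint case $m=M$ via Proposition~\ref{prop:extrema}, derives $A\ge mI$ before invoking Proposition~\ref{log}, applies Theorem~\ref{KTI}\textup{(4)} with $p=1$, and concludes with Lemma~\ref{lem:boundary_max}. The explicit check that $C_1(m,M)=C(m,M)$ is a bookkeeping step the paper leaves implicit.
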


\begin{proof}
Fix $z\in\Omega$ and set $A:=D'D''u(z)$ and $B:=D'D''v(z)$. If $m=M$, then $B=mI$ and $C(m,m)=0$. In this case, assumption \textup{(2)} gives $\Delta_x(A)\ge m$ for all unit $x$, hence $A\ge mI=B$ by Proposition~\ref{prop:extrema}, and the claim follows. Thus, we may assume $m<M$.

As in the proof of Theorem~\ref{cp1}, the assumptions imply $A\ge mI$ and hence $A>0$. Moreover, assumption \textup{(2)} and Proposition~\ref{log} give $A\gg B$.

Since $mI\le B\le MI$, Theorem~\ref{KTI}\textup{(4)} with $p=1$ yields
\begin{equation}\label{ineq1}
A+C(m,M)I\ge B.
\end{equation}
Equivalently,
\[
D'D''\bigl(u+C(m,M)\|z\|^2-v\bigr)(z)=A+C(m,M)I-B\ge 0,
\]
so $u+C(m,M)\|z\|^2-v$ is plurisubharmonic on $\Omega$. The boundary assumption gives
\[
u+C(m,M)\|z\|^2-v\le 0\qquad\text{on }\partial\Omega,
\]
hence Lemma~\ref{lem:boundary_max} yields $u+C(m,M)\|z\|^2-v\le 0$ on $\Omega$.
\end{proof}

For $m<M$, we have
\[
C(m,M)=\frac {M-m}{\log M-\log m}\log S\left(\frac Mm\right)<M.
\]
Moreover,
\[
C(m,M)=\frac {M-m}{\log M-\log m}\log S\left(\frac Mm\right)>m
\qquad\text{if}\qquad \frac Mm\geq 6.
\]
Thus, in the nondegenerate case $m<M$, the condition (\ref{ineq1}) does not follow trivially from the assumptions. For $m=M$, by definition $C(m,m)=0$.

\begin{theorem}\label{cp3}
Let $\Omega\subset H$ be a bounded domain and let $u\in \mathcal{PSH}(\Omega)\cap C^2(\overline\Omega)$ be bounded on $\overline\Omega$. Assume:
\begin{enumerate}\itemsep2mm
\item $u\leq \|z\|^2$ on $\partial \Omega$;

\item for every $z\in \Omega$ and every unit vector $x\in H$,
\[
1=\Delta_x(I)=\Delta_x(D'D''\|z\|^2)\leq \Delta_x(D'D''u(z)).
\]
\end{enumerate}
Then $u-\|z\|^2$ is plurisubharmonic and nonpositive on $\Omega$.
\end{theorem}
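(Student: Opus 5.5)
The plan is to reduce Theorem~\ref{cp3} to a pointwise Loewner inequality $D'D''u(z)\ge I$. Plurisubharmonicity of $u-\|z\|^2$ then follows from Theorem~\ref{thm_basicprop1}(9), and nonpositivity follows from Lemma~\ref{lem:boundary_max}. This is the endpoint case $m=M=1$ of Theorem~\ref{cp1}, with $v=\|z\|^2$.

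Fix $z\in\Omega$ and set $A:=D'D''u(z)$, a positive operator because $u$ is plurisubharmonic and $C^2$. Assumption (2) says $\Delta_x(A)\ge 1$ for every unit vector $x$. We take the infimum over $\|x\|=1$ and apply Proposition~\ref{prop:extrema}, which is valid for all $A\ge 0$, including non-invertible $A$, with the extended definition of $\Delta_x$. This gives
\[
\inf\sigma(A)=\inf_{\|x\|=1}\Delta_x(A)\ge 1 .
\]
Hence $A\ge I$. Since $D'D''\|z\|^2=I$, we get $D'D''\bigl(u-\|z\|^2\bigr)(z)=A-I\ge 0$ at every $z\in\Omega$. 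As $u-\|z\|^2\in C^2$, Theorem~\ref{thm_basicprop1}(9) shows it is plurisubharmonic on $\Omega$.

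For the sign, set $w:=u-\|z\|^2$. Then $w\in\mathcal{PSH}(\Omega)\cap C(\overline\Omega)$, since $u\in C^2(\overline\Omega)$ and the norm is continuous. Assumption (1) gives $w\le 0$ on $\partial\Omega$, and $\Omega$ is bounded. Lemma~\ref{lem:boundary_max} therefore yields $w\le 0$ on $\Omega$.

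None of the steps is a real obstacle. The one point to watch is that no invertibility of $A$ is assumed a priori, so Proposition~\ref{log} and the chaotic order cannot be invoked directly. Using Proposition~\ref{prop:extrema}, which covers all positive operators, sidesteps this. Invertibility then comes out as a consequence ($A\ge I$). Equivalently, one could note $\log A\ge 0=\log I$ and use Corollary~\ref{equiv}, since $I$ commutes with $A$. The boundedness hypothesis on $u$ is only needed to make $w$ an admissible competitor in Lemma~\ref{lem:boundary_max}, and continuity up to the boundary already supplies that.
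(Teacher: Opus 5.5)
Your proposal is correct and is essentially the paper's proof. Proposition~\ref{prop:extrema}, which holds for all positive operators, turns assumption (2) into $\inf\sigma(D'D''u(z))\ge 1$, i.e.\ $D'D''u(z)\ge I$; hence $D'D''(u-\|z\|^2)\ge 0$, and then assumption (1) together with Lemma~\ref{lem:boundary_max} gives nonpositivity, exactly as in the paper.
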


\begin{proof}
Fix $z\in \Omega$ and set $A:=D'D''u(z)$. Assumption \textup{(2)} gives
\[
\inf_{\|x\|=1}\Delta_x(A)\ge 1.
\]
By Proposition~\ref{prop:extrema} we obtain $\inf\sigma(A)\ge 1$, i.e.\ $A\ge I$. Therefore
\[
D'D''(u-\|z\|^2)(z)=A-I\ge 0,
\]
so $u-\|z\|^2$ is plurisubharmonic on $\Omega$. By assumption \textup{(1)}, $u-\|z\|^2\le 0$ on $\partial\Omega$, and Lemma~\ref{lem:boundary_max} yields $u-\|z\|^2\le 0$ in $\Omega$.
\end{proof}

\begin{theorem}\label{cp4}
Let $\Omega\subset H$ be a bounded domain and let $u\in \mathcal{PSH}(\Omega)\cap C^2(\overline\Omega)$ be bounded on $\overline\Omega$. Assume:
\begin{enumerate}\itemsep2mm
\item $u\geq \|z\|^2$ on $\partial \Omega$;

\item for every $z\in \Omega$ and every unit vector $x\in H$,
\[
\Delta_x(D'D''u(z))\leq \Delta_x(I)=1.
\]
\end{enumerate}
Then $\|z\|^2-u$ is plurisubharmonic and nonpositive on $\Omega$.
\end{theorem}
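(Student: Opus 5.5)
The argument mirrors the proof of Theorem~\ref{cp3}, now using the \emph{supremum} half of Proposition~\ref{prop:extrema} in place of the infimum half. Fix $z\in\Omega$ and set $A:=D'D''u(z)$. Since $u$ is plurisubharmonic and $C^2$, we have $A\ge 0$. Assumption \textup{(2)} gives
\[
\sup_{\|x\|=1}\Delta_x(A)\le 1 .
\]
Proposition~\ref{prop:extrema}, which is valid for all $A\ge 0$ including the non-invertible case, identifies this supremum with $\sup\sigma(A)=\|A\|$. Hence $\sup\sigma(A)\le 1$, that is, $A\le I$. Because $D'D''\|z\|^2=I$, we obtain
\[
D'D''\bigl(\|z\|^2-u\bigr)(z)=I-A\ge 0
\]
at every point of $\Omega$.

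Since $\|z\|^2-u\in C^2(\Omega)$ has positive semidefinite Levi form everywhere, Theorem~\ref{thm_basicprop1}\textup{(9)} shows that it is plurisubharmonic on $\Omega$. Assumption \textup{(1)} gives $\|z\|^2-u\le 0$ on $\partial\Omega$. The function $\|z\|^2-u$ is continuous on $\overline\Omega$, because $u\in C^2(\overline\Omega)$, and $\Omega$ is bounded. Lemma~\ref{lem:boundary_max} therefore yields $\|z\|^2-u\le 0$ throughout $\Omega$.

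The only step needing care is the passage from the determinant bound to $A\le I$ when $A$ is not invertible. There $\Delta_x(A)$ is defined through the spectral-integral extension, and $\log A$ is not bounded. This case is exactly what the second half of the proof of Proposition~\ref{prop:extrema} covers: for any $t<\|A\|$, a unit vector in the range of $E((t,\|A\|])$ satisfies $\Delta_x(A)\ge t$. So no invertibility hypothesis is needed, and the argument is essentially routine.
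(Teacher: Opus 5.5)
Your proof is correct and follows the paper's argument: the supremum half of Proposition~\ref{prop:extrema} gives $A\le I$, so $D'D''(\|z\|^2-u)=I-A\ge 0$ makes $\|z\|^2-u$ plurisubharmonic, and Lemma~\ref{lem:boundary_max} concludes. Your remark that the non-invertible case is handled by the spectral-projection part of Proposition~\ref{prop:extrema} is accurate, and it is exactly what the paper's proof implicitly relies on.
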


\begin{proof}
Fix $z\in \Omega$ and set $A:=D'D''u(z)$. Assumption \textup{(2)} gives
\[
\sup_{\|x\|=1}\Delta_x(A)\le 1.
\]
By Proposition~\ref{prop:extrema} we obtain $\sup\sigma(A)\le 1$, and therefore $A\le I$. Thus
\[
D'D''(\|z\|^2-u)(z)=I-A\ge 0,
\]
so $\|z\|^2-u$ is plurisubharmonic on $\Omega$. By assumption \textup{(1)}, $\|z\|^2-u\le 0$ on $\partial\Omega$, and Lemma~\ref{lem:boundary_max} yields $\|z\|^2-u\le 0$ in $\Omega$.
\end{proof}

\begin{corollary}\label{cor_bounds}
Let $\Omega\subset H$ be a bounded domain and let $u\in \mathcal{PSH}(\Omega)\cap C^2(\overline\Omega)$ be bounded. Assume that
\[
mI\leq D'D''u(z)\leq MI\qquad\text{for all }z\in \Omega,
\]
for some $0<m\leq M$. Set $R^2:=\sup_{w\in\partial\Omega}\|w\|^2<\infty$,  and $r^2:=\inf_{w\in\partial\Omega}\|w\|^2<\infty$. Then for all $z\in \Omega$,
\[
M(\|z\|^2-R^2)+\inf_{w\in \partial\Omega}u(w)\ \leq\ u(z)\ \leq\ m(\|z\|^2-r^2)+\sup_{w\in \partial\Omega}u(w).
\]
\end{corollary}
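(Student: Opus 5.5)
The plan is to compare $u$ with the two quadratic barriers
\[
\phi_-(z):=M\|z\|^2+c_-,\qquad \phi_+(z):=m\|z\|^2+c_+,
\]
where
\[
c_-:=\inf_{w\in\partial\Omega}u(w)-MR^2,\qquad c_+:=\sup_{w\in\partial\Omega}u(w)-mr^2 .
\]
Both bounds will come from Lemma~\ref{lem:boundary_max}, applied to a plurisubharmonic difference that is continuous on $\overline\Omega$. Continuity holds because $u\in C^2(\overline\Omega)$ and $\|z\|^2$ is smooth. Since $D'D''\|z\|^2=I$, all the Levi form computations reduce to the assumed pointwise bounds $mI\le D'D''u(z)\le MI$.

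\textbf{Lower bound.} Set $w_-:=\phi_--u$. Then
\[
D'D''w_-(z)=MI-D'D''u(z)\ge 0 ,
\]
so $w_-$ is plurisubharmonic by Theorem~\ref{thm_basicprop1}(9). On $\partial\Omega$ we have $\|w\|^2\le R^2$, hence
\[
w_-(w)=M(\|w\|^2-R^2)+\inf_{\partial\Omega}u-u(w)\le 0 .
\]
Lemma~\ref{lem:boundary_max} then gives $w_-\le 0$ in $\Omega$, which is exactly the left inequality. One could equally well phrase this through Theorem~\ref{cp4} applied to $u/M$ (via Proposition~\ref{prop:extrema}), but the direct route is cleaner.

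\textbf{Upper bound.} Set $w_+:=u-\phi_+$. Then
\[
D'D''w_+(z)=D'D''u(z)-mI\ge 0 ,
\]
so $w_+$ is plurisubharmonic. On $\partial\Omega$ we have $\|w\|^2\ge r^2$, hence
\[
w_+(w)=u(w)-\sup_{\partial\Omega}u-m(\|w\|^2-r^2)\le 0 .
\]
Lemma~\ref{lem:boundary_max} again gives $w_+\le 0$ in $\Omega$, which is the right inequality. This is the analogue of Theorem~\ref{cp3}.

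\textbf{Main obstacle.} The only point requiring care is the well-definedness of the constants. $R<\infty$ because $\Omega$ is bounded. The quantities $\inf_{\partial\Omega}u$ and $\sup_{\partial\Omega}u$ are finite because $u$ is assumed bounded on $\overline\Omega$; boundedness is needed here since $\partial\Omega$ need not be compact in infinite dimensions. Finally, the boundary inequalities must be checked using the direction of the estimate $\|w\|^2\le R^2$ (respectively $\|w\|^2\ge r^2$) that matches the sign of the coefficient $M$ (respectively $-m$).
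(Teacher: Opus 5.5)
Your proposal is correct and follows essentially the same route as the paper. The paper likewise notes that $u-m\|z\|^2$ and $M\|z\|^2-u$ are plurisubharmonic because $mI\le D'D''u\le MI$, applies Lemma~\ref{lem:boundary_max}, and then bounds the boundary values using $r^2\le\|w\|^2\le R^2$. The only difference is cosmetic: you fold the boundary constants into the barriers before invoking the lemma, while the paper estimates the boundary supremum afterwards.
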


\begin{proof}
Since $D'D''u\geq mI$, the function $u-m\|z\|^2$ is plurisubharmonic. By Lemma~\ref{lem:boundary_max},
\[
u(z)-m\|z\|^2\leq \sup_{w\in\partial\Omega}\bigl(u(w)-m\|w\|^2\bigr)\leq \sup_{w\in\partial\Omega}u(w)-mr^2,
\]
which yields the upper bound. Similarly, $D'D''u\leq MI$ implies that $M\|z\|^2-u$ is plurisubharmonic, and Lemma~\ref{lem:boundary_max} gives
\[
M\|z\|^2-u(z)\leq \sup_{w\in\partial\Omega}\bigl(M\|w\|^2-u(w)\bigr)\leq MR^2-\inf_{w\in\partial\Omega}u(w),
\]
which gives the lower bound.
\end{proof}

\section{Open problems and future directions}\label{sec:open}

We conclude with several questions suggested by the preceding sections and some possible directions for further work.

\begin{question}
\noindent\textbf{Optimal mixed Kantorovich--Specht bounds.}
Fix $p>0$ and $0<m<M$, and put $h:=M/m$ and $S:=S(h,p)=S(h^p)$.
Theorem~\ref{KTI}\textup{(3)} yields the multiplicative estimate
\[
B^p\le S\,A^p
\]
whenever $A\gg B$ and $mI\le B\le MI$, while Theorem~\ref{KTI}\textup{(4)} gives the additive estimate
\[
B^p\le A^p+C_{\mathrm{add}}(m,M,p)\,I,
\qquad
C_{\mathrm{add}}(m,M,p):=
\frac{M^p-m^p}{\log M^p-\log m^p}\,\log S.
\]
Proposition~\ref{prop:mixed_add_mult} interpolates between these two bounds and shows that for every $c\in[1,S]$ one has
\[
B^p\le c\,A^p+\frac{S-c}{S-1}\,C_{\mathrm{add}}(m,M,p)\,I.
\]
For fixed $c\in[1,S]$, determine the optimal additive term
\[
d_*(c):=\inf\Bigl\{d\ge0:\ B^p\le c\,A^p+d\,I
\ \text{whenever}\ A\gg B,\ mI\le B\le MI\Bigr\}.
\]
Then $d_*(S)=0$ and $d_*(1)\le C_{\mathrm{add}}(m,M,p)$.
Is the linear interpolation from Proposition~\ref{prop:mixed_add_mult} optimal, i.e.
\[
d_*(c)=\frac{S-c}{S-1}\,C_{\mathrm{add}}(m,M,p),
\qquad c\in[1,S]?
\]
Since $A\gg B$ reduces to $A\ge B$ in the commuting case, the difficulty here is genuinely noncommutative.
\end{question}

\begin{question}
\textbf{Finite-rank Levi forms and moving ranges.}
Let $\Omega\subset H$ be a domain and let $u\in\mathcal{PSH}(\Omega)\cap C^2(\Omega)$.
Corollary~\ref{maximal}, Proposition~\ref{prop:common-range}, Proposition~\ref{prop:approx_common_range}, and Corollary~\ref{cor:collectively_compact} give several general sufficient conditions for maximality.

Example~\ref{eFiniteRankMoving} shows that maximality may still hold even when there is no fixed null direction and the ranges
\[
\mathrm{Ran}\bigl(D'D''u(a)\bigr)
\]
are not contained in any fixed proper closed subspace of $H$. Thus, pointwise finite rank alone does not reduce the problem to a common kernel or a common range.

What happens beyond these results? Suppose that
\[
\operatorname{rank}\bigl(D'D''u(a)\bigr)<\infty
\qquad (a\in\Omega).
\]
Must $u$ be maximal? If not, can one construct a counterexample in which the finite-dimensional ranges rotate so much on bounded sets that no uniform approximate null sequence exists?
\end{question}

\begin{question}
\textbf{From $\operatorname{FSD}(u)\equiv 0$ to maximality.}
For $u\in\mathcal{PSH}(\Omega)\cap C^2(\Omega)$ we proved that
\[
u\text{ maximal in }\Omega
\quad\Longrightarrow\quad
\operatorname{FSD}(u)\equiv 0.
\]
The converse holds in several classes: for constant Levi forms, for the family in Proposition~\ref{prop:phiA_factory}, for diagonal Levi forms satisfying Proposition~\ref{prop:diag_factory}, and under the approximate common-range hypothesis of Proposition~\ref{prop:approx_common_range}. Proposition~\ref{prop:model_majorant} gives another sufficient condition.

Can one find
\[
u\in\mathcal{PSH}(\Omega)\cap C^2(\Omega)
\]
such that
\[
\operatorname{FSD}(u)\equiv 0
\qquad\text{but}\qquad
u\text{ is not maximal},
\]
or else prove that this cannot happen under some natural additional assumptions on the map
\[
z\longmapsto D'D''u(z)?
\]

One may try to look for a counterexample among families of Levi forms for which
\[
\inf\sigma\bigl(D'D''u(z)\bigr)=0
\qquad (z\in\Omega),
\]
but the approximate null directions rotate too much on bounded sets to satisfy Theorem~\ref{prop:approx_null_maximality}.
\end{question}

\begin{question}
\noindent\textbf{Beyond the $C^2$ setting.} All maximality and comparison results in this paper are proved for functions in $\mathcal{PSH}(\Omega)\cap C^2(\Omega)$.
Can the determinant density $\operatorname{FSD}(u)$, or at least the pointwise condition
\[
\inf\sigma(D'D''u(z))=0
\qquad z\in\Omega,
\]
be interpreted in a meaningful weak sense for rougher plurisubharmonic functions?
More generally, is there an approximation, viscosity, or variational framework in which the comparison principles of Section~6 continue to hold beyond the $C^2$?
\end{question}

\section*{Acknowledgments} The first- and second-named authors were partially funded by the Jagiellonian University in Kraków under the “Excellence Initiative - Research University” program. The first-named author was partially funded by the Swedish Research Council under grant agreement no.~2025-05053 and by the Swedish Energy Agency under project no.~P2025-04323. The second-named author was also partially funded by the National Science Centre, Poland, under the Weave-UNISONO programme, grant no.~UMO-2025/07/Y/ST1/00146. The fourth-named author was supported in part by the Engineering and Physical Sciences Research Council (EPSRC) grant EP/Y008375/1.

\end{document}